\newtheorem{theorem}{Theorem}[section]
\newtheorem{proposition}[theorem]{Proposition}
\newtheorem{lemma}[theorem]{Lemma}
\newtheorem{cor}[theorem]{Corollary}
\theoremstyle{definition}
\newtheorem{definition}[theorem]{Definition}
\newtheorem{example}[theorem]{Example}
\newtheorem{remark}[theorem]{Remark}
\numberwithin{equation}{section}
\begin{document}

\title[Generalized twisted Edwards curves and hypergeometric functions]
{Generalized twisted Edwards curves over finite fields and hypergeometric functions}

\author[Rupam Barman]{Rupam Barman}
\address{Department of Mathematics, Indian Institute of Technology Guwahati, North Guwahati, Assam, India, PIN- 781039
\newline 
ORCID ID: 0000-0002-4480-1788}
\email{rupam@iitg.ac.in}

\author[Sipra Maity]{Sipra Maity}
\address{Department of Mathematics, Indian Institute of Technology Guwahati, North Guwahati, Assam, India, PIN- 781039
\newline 
ORCID ID: 0009-0004-1457-394X}
\email{s.maity@iitg.ac.in}

\author[Sulakashna]{Sulakashna}
\address{Department of Mathematics, Indian Institute of Technology Guwahati, North Guwahati, Assam, India, PIN- 781039
\newline 
ORCID ID: 0009-0008-0441-4792}
\email{sulakash@iitg.ac.in}

\thanks{}


\subjclass[2010]{11G25, 33E50, 11S80, 11T24.}
\date{7th December 2024, version-1}
\keywords{character sum; hypergeometric functions; $p$-adic gamma function; algebraic curves; generalized Edwards curves.}
\begin{abstract} 
Let $\mathbb{F}_q$ be a finite field with $q$ elements. For $a,b,c,d,e,f \in \mathbb{F}_q^{\times}$, denote by $C_{a,b,c,d,e,f}$ the family of algebraic curves over $\mathbb{F}_q$ given by the affine equation
\begin{align*}
C_{a,b,c,d,e,f}:ay^2+bx^2+cxy=d+ex^2y^2+fx^3y.
\end{align*}
The family of generalized twisted Edwards curves is a subfamily of $C_{a,b,c,d,e,f}$. Let $\#C_{a,b,c,d,e,f}(\mathbb{F}_q)$
denote the number of points on $C_{a,b,c,d,e,f}$ over $\mathbb{F}_q$.
In this article, we find certain expressions for $\#C_{a,b,c,d,e,f}(\mathbb{F}_q)$ when $af=ce$. If $c^2-4ab\neq 0$, we express $\#C_{a,b,c,d,e,f}(\mathbb{F}_q)$ in terms of a $p$-adic hypergeometric function $\mathbb{G}(x)$ whose values are explicitly known for all $x\in \mathbb{F}_q$. Next, if $c^2-4ab=0$, we express $\#C_{a,b,c,d,e,f}(\mathbb{F}_q)$ in terms of another $p$-adic hypergeometric function and then relate it to the traces of Frobenius endomorphisms of a family of elliptic curves. Furthermore, using the known values of the hypergeometric functions, we deduce some nice formulas for $\#C_{a,b,c,d,e,f}(\mathbb{F}_q)$.
\end{abstract}
\maketitle
\section{Introduction and statement of results}
Let $K$ be a field with $\text{char}(K)\neq 2$ and let $\alpha,\beta \in K $ and denote $\Delta = \alpha^2 + 4\beta$. Let $k$ and $\ell$ be elements of $K$ such that $\ell$ and $\Delta/\ell$ are non-squares in $K$. Then the generalized twisted Edwards curves with coefficients $\alpha, \beta, k$, and $\ell$ is the affine curve
	\begin{align*}
		E_G:y^2-\beta x^2+\alpha xy=k^2\{{1+\ell(y^2+\alpha x y)x^2}\}.
	\end{align*}
For more details about the generalized twisted Edwards curves, see \cite{AC}. Edwards model of elliptic curves were proposed by Edwards in \cite{H.M.E}. Since then Edwards model of eliptic curves have been used in many cryptographic applications. The group law in Edwards model is simpler to state than on other models of elliptic curves. 
To express more elliptic curves over finite fields with the addition law being easily formulated, Bernstein et al. \cite{DB} introduced twisted Edwards curves for the first time.
The families of classical Edwards curves and twisted Edwards curves are given by the following affine equations
\begin{align*}
	&E_{a,d}:x^2+y^2=a^2(1+dx^2y^2),\\
	&E^{\prime}_{a,d}:ax^2+y^2= 1+dx^2y^2,
\end{align*}
respectively, and both these families are particular cases of the generalized twisted Edwards curves $E_G$. 
\par Let $p$ be an odd prime, and let $\mathbb{F}_q$ be the finite field containing $q$ elements, where $q=p^r,r\geq1$. The first author with Kalita \cite{Gk1} found a relation between the number of $\mathbb{F}_q$-points on twisted Edwards curves and hypergeometric functions over finite fields, and proved that the number of points on twisted Edwards curves is related to the number of points on a family of elliptic curves over $\mathbb{F}_q$. In \cite{MS}, Sadek and El-Sissi expressed the number of $\mathbb{F}_p$-points on classical Edwards curves $E_{a,1}$ and twisted Edwards curves $E^{\prime}_{a,d}$ in terms of hypergeometric functions over $\mathbb{F}_{p}$ following an approach different from \cite{Gk1}. In a recent article \cite{JPR}, Juyal et al. extended the result known for $E_{a,1}$ to a generic case of classical Edwards curves $E_{a,d}$. Also, they introduced the notion of trace of Edwards curves and related it to the trace of Frobenius endomorphism of a family of elliptic curves. 
\par
In this article, we consider a more general family of affine algebraic curves. For $a,b,c,d,e,f \in \mathbb{F}_q^{\times}$, denote by $C_{a,b,c,d,e,f}$ the family of algebraic curves over $\mathbb{F}_q$ given by the affine equation
\begin{align}\label{curve}
	C_{a,b,c,d,e,f}:ay^2+bx^2+cxy=d+ex^2y^2+fx^3y.
\end{align}
The family of generalized twisted Edwards curves $E_G$ is a subfamily of $C_{a,b,c,d,e,f}$. Very recently, Azharuddin and Kalita \cite{AS} found a relation between the finite field Appell series and the number of points on the affine curve defined in \eqref{curve} when $f=0$. In this article, we find certain expressions for the number of $\mathbb{F}_q$-points on $C_{a,b,c,d,e,f}$ in terms of $p$-adic hypergeometric functions. 
\par To state our main results, we now introduce $p$-adic hypergeometric functions. Let $\mathbb{Z}_p$ and $\mathbb{Q}_p$ denote the ring of $p$-adic integers and the field of $p$-adic numbers, respectively. Let $\Gamma_p(\cdot)$ denote the Morita's $p$-adic gamma function, and let $\omega$ denote the 
Teichm\"{u}ller character of $\mathbb{F}_q$. We denote by $\overline{\omega}$ the character inverse of $\omega$. 
For $x \in \mathbb{Q}$, we let $\lfloor x\rfloor$ denote the greatest integer less than or equal to $x$ and $\langle x\rangle$ denote the fractional part of $x$, i.e., $x-\lfloor x\rfloor$, satisfying $0\leq\langle x\rangle<1$. McCarthy's $p$-adic hypergeometric function $_{n}G_{n}[\cdots]_q$ is defined as follows.
\begin{definition}(\emph{\cite[Definition 5.1]{mccarthy2}}). \label{defin1}
	Let $p$ be an odd prime and $q=p^r$, $r\geq 1$. Let $t \in \mathbb{F}_q$.
	For positive integer $n$ and $1\leq k\leq n$, let $a_k$, $b_k$ $\in \mathbb{Q}\cap \mathbb{Z}_p$.
	Then the function $_{n}G_{n}[\cdots]_q$ is defined by
	\begin{align}
		&_nG_n\left[\begin{array}{cccc}
			a_1, & a_2, & \ldots, & a_n \\
			b_1, & b_2, & \ldots, & b_n
		\end{array}|t
		\right]_q\notag\\
		&\hspace{1cm}:=\frac{-1}{q-1}\sum_{a=0}^{q-2}(-1)^{an}~~\overline{\omega}^a(t)
		\prod\limits_{k=1}^n\prod\limits_{i=0}^{r-1}(-p)^{-\lfloor \langle a_kp^i \rangle-\frac{ap^i}{q-1} \rfloor -\lfloor\langle -b_kp^i \rangle +\frac{ap^i}{q-1}\rfloor}\notag\\
		&\hspace{2cm} \times \frac{\Gamma_p(\langle (a_k-\frac{a}{q-1})p^i\rangle)}{\Gamma_p(\langle a_kp^i \rangle)}
		\frac{\Gamma_p(\langle (-b_k+\frac{a}{q-1})p^i \rangle)}{\Gamma_p(\langle -b_kp^i \rangle)}.\notag
	\end{align}
\end{definition}
For $x\in \mathbb{F}_q$, we denote by $\mathbb{G}(x)$ the following $p$-adic hypergeometric function
\begin{align*}
	\mathbb{G}(x):={_2}G_2\left[\begin{array}{cc}
		\frac{1}{4}, & \frac{3}{4} \vspace*{0.05cm}\\
		0, & \frac{1}{2}
	\end{array}|x
	\right]_q.
\end{align*}
In general, finding special values of $p$-adic hypergeometric functions is a difficult problem. However, the values of $\mathbb{G}(x)$ are known for all $x$ due to Saikia \cite{NS}. 
\begin{theorem}\emph{(\cite[Theorem 1.2]{NS})}.\label{sv}
	Let $p\geq3$ be a prime and $q=p^r$, $r\geq1$. Let  $t\in\mathbb{F}_q^\times$. Then 
	\begin{enumerate}
		\item
		\begin{align*}
			\mathbb{G}(1)=
			\left\{\begin{array}{ll}
				\hspace*{0.27cm}1, & \hbox{if $q\equiv\pm1\pmod8$;} \\
				-1, & \hbox{if $q\equiv\pm3\pmod8$.}
			\end{array}
			\right.
		\end{align*}	
		\item Let $t\neq1$ and $\frac{t-1}{t}$ be a square in $\mathbb{F}_q^\times$ such that $\frac{t-1}{t}=a^2$ for some $a\in\mathbb{F}_q^\times$. Then we have
		\begin{align*}
			\mathbb{G}(t)=\varphi(2)(\varphi(1+a)+\varphi(1-a)).
		\end{align*}
		\item If $\frac{t-1}{t}$ is not a square in $\mathbb{F}_q$, then
		\begin{align*}
			\mathbb{G}(t)=0.
		\end{align*}
	\end{enumerate}
\end{theorem}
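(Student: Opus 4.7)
The plan is to express $\mathbb{G}(t)$ as a classical character sum over $\mathbb{F}_q$ via the Gross--Koblitz formula and the Hasse--Davenport product relation, and then to evaluate that sum in each of the three cases by standard quadratic character identities. The whole strategy is guided by the classical hypergeometric identity
$${}_2F_1\!\left(\tfrac14,\tfrac34;\tfrac12;z^2\right)=\tfrac12\bigl((1-z)^{-1/2}+(1+z)^{-1/2}\bigr),$$
which, under the character-theoretic dictionary sending $(\cdot)^{-1/2}$ to the quadratic character $\varphi$ (with the constant $\tfrac12$ translating, via Hasse--Davenport at $n=2$, into $\varphi(2)$), predicts precisely the closed form in part~(2) once the substitution $z^2=(t-1)/t$ is made.

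To realize the plan, I would first apply the Gross--Koblitz formula to each of the four $p$-adic gamma ratios in Definition~\ref{defin1} with parameters $(a_1,a_2;b_1,b_2)=(1/4,3/4;0,1/2)$. The $(-p)^{-\lfloor\cdots\rfloor-\lfloor\cdots\rfloor}$ prefactors in the definition of $_nG_n$ are engineered precisely to absorb the $\pi^{(p-1)s(\cdot)}$ factors produced by Gross--Koblitz, so this step rewrites $\mathbb{G}(t)$ as a finite sum over $a=0,\dots,q-2$ of Gauss-sum ratios involving a fixed quartic character $\chi_4$ and the quadratic character $\varphi$. I would then apply the Hasse--Davenport product formula
$$g(\chi)\,g(\chi\varphi)=\chi(4)^{-1}\,g(\varphi)\,g(\chi^2)$$
with $\chi=\overline\omega^a\chi_4$ (so that $\chi\varphi=\overline\omega^a\chi_4^3$) to merge the two quartic Gauss sums in the numerator into a single quadratic one, and simplify the denominator analogously. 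The resulting expression is essentially Greene's finite-field ${}_2F_1(\chi_4,\chi_4^3;\varphi\,|\,t)$; expanding the Jacobi sums and performing Fourier inversion in $a$ converts everything into an explicit quadratic character sum of the form $\varphi(2)\sum_x\varphi(f(t,x))$ for a polynomial $f(t,x)$ whose discriminant in $x$ is $(t-1)/t$ up to units.

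From this normal form, parts~(2) and~(3) fall out by splitting on whether $(t-1)/t$ is a square in $\mathbb{F}_q^\times$. In the square case $(t-1)/t=a^2$, the polynomial $f(t,\cdot)$ factors as a constant multiple of $(1-ax)(1+ax)$ and the sum evaluates to $\varphi(2)(\varphi(1+a)+\varphi(1-a))$ by a direct quadratic character computation, combining the identity $\sum_x\varphi(\alpha x^2+\beta x+\gamma)=-\varphi(\alpha)$ with a translation argument that extracts the boundary values $\varphi(1\pm a)$. In the non-square case, the substitution $x\mapsto\lambda x$ with $\varphi(\lambda)=-1$ sends the sum to its own negative and therefore forces it to vanish. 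Part~(1), the degenerate case $t=1$, lies outside the substitution $(t-1)/t=a^2$ and must be handled separately: specializing $t=1$ reduces the character sum to a Jacobsthal-type expression whose sign is controlled by whether $2$ is a square in $\mathbb{F}_q$, i.e.\ by $q\bmod 8$.

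The main technical obstacle is bookkeeping. Every power of $q$, every sign, and every factor of $(-p)$ from Definition~\ref{defin1} must be tracked through Gross--Koblitz, matched against the $\pi$-powers, and propagated through two Hasse--Davenport applications, all within $\mathbb{Z}_p$. A secondary complication arises when $q\equiv 3\pmod 4$, since then $\chi_4$ is not defined over $\mathbb{F}_q$; one must either lift to $\mathbb{F}_{q^2}$ via an additional Hasse--Davenport identity and descend, or rewrite the sum so that only the $\mathbb{F}_q$-rational combinations $\chi_4\chi_4^3=\varepsilon$ and $\chi_4+\chi_4^3$ appear.
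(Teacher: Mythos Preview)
The paper does not contain a proof of this statement: Theorem~\ref{sv} is quoted verbatim from Saikia~\cite{NS} and used as a black box, with only a remark that the $\mathbb{F}_p$ result extends to $\mathbb{F}_q$. So there is no ``paper's own proof'' to compare your proposal against.

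Your outline is nonetheless the standard and correct route to this result, and is essentially what Saikia does in~\cite{NS}. The guiding classical identity is the right one, and the Gross--Koblitz/Hasse--Davenport conversion of $\mathbb{G}(t)$ into a quadratic character sum is exactly the mechanism behind such evaluations (compare the computation in Proposition~\ref{prop-1} of the present paper, which runs the same machinery in reverse). Two cautions: first, what you have written is a plan and not a proof --- the ``bookkeeping obstacle'' you flag is the entire content of the argument, and at the level of detail you give one cannot yet verify that the constants and signs come out as claimed. Second, your handling of the $q\equiv 3\pmod 4$ case is vague; the cleanest way through is not to introduce $\chi_4$ at all but to keep the expression in terms of $g(\overline\psi)g(\overline\psi\varphi)g(\psi^2\varphi)$ and apply Hasse--Davenport with $m=2$ (as in the proof of Proposition~\ref{prop-1}), which requires only characters defined over $\mathbb{F}_q$.
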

\begin{remark}
	We note that Theorem \ref{sv} is proved by Saikia for $\mathbb{F}_p$, where $p$ is an odd prime. However, she remarked that the theorem is also true for $\mathbb{F}_q$, and we have also verified the same.
\end{remark}
\par Throughout the paper, $p$ is an odd prime, and $\mathbb{F}_q$ denotes the finite field containing $q$ elements, where $q=p^r,r\geq1$. Also, let $\widehat{\mathbb{F}_q^{\times}}$ be the group of all the multiplicative characters on $\mathbb{F}_q^{\times}$. We extend the domain of each $\chi\in \widehat{\mathbb{F}_q^{\times}}$ to $\mathbb{F}_q$ by setting $\chi(0):=0$
including the trivial character $\varepsilon$. Let $\varphi$ denote the quadratic character on $\mathbb{F}_q$. Let $\delta$ denote the function on $\widehat{\mathbb{F}_q^\times}$ defined by
\begin{align}\label{delta-1}
	\delta(A):=\left\{
	\begin{array}{ll}
		1, & \hbox{if $A=\varepsilon$;} \\
		0, & \hbox{otherwise.}
	\end{array}
	\right.
\end{align}
We also denote by $\delta$ the function defined on $\mathbb{F}_q$ by
\begin{align*}
	\delta(x):=\left\{
	\begin{array}{ll}
		1, & \hbox{if $x=0$;} \\
		0, & \hbox{otherwise.}
	\end{array}
	\right.
\end{align*}
Our first main result gives an expression for the number of $\mathbb{F}_q$-points on $C_{a,b,c,d,e,f}$ in terms of the $p$-adic hypergeometric functions $\mathbb{G}(x)$. 
\begin{theorem}\label{MT-1}
	Let $p$ be an odd prime. Let $\chi_4$ be a multiplicative character of order $4$ on $\mathbb{F}_q$. Let $a,b,c,d,e,f\in\mathbb{F}_q^\times$ be such that $af=ce$ and $c^2-4ab\neq0$. Let $\#C_{a,b,c,d,e,f}(\mathbb{F}_q)$ denote the number of $\mathbb{F}_q$-points on the affine algebraic curve $C_{a,b,c,d,e,f}$. Then we have 
	\begin{align*}
		\#C_{a,b,c,d,e,f}(\mathbb{F}_q)=q-2+\varphi(bd)+\varphi(ad)-\varphi(ae)+\varphi(abde(c^2-4ab))+I_2^\prime\\ +\varphi(c^2-4ab)\sum_{y\in\mathbb{F}_q}\varphi(y(y-1)(1+u_y))+q\delta\left(1-\frac{ab}{de}\right)(1+\varphi(ae))+X,
	\end{align*}
	where
	\begin{align*}
		&u_y=\frac{16de(1-y)(c^2-4aby)}{(c^2-4ab)^2},\\
		&X=\varphi(ad)\sum_{y\in\mathbb{F}_q\backslash\{1,\frac{c^2}{4ab}\} }\varphi(y)\mathbb{G}\left(-\frac{1}{u_y}\right),\\
		&I_2^\prime=\left\{
		\begin{array}{ll}
			\varphi(ae(c^2-4ab))\left(\chi_4\left(\frac{de}{ab}\right)+\overline{\chi_4}\left(\frac{de}{ab}\right)\right), & \hbox{if $q\equiv1\pmod4$;} \\
			0, & \hbox{if $q\equiv3\pmod4$.}
		\end{array}
		\right.
	\end{align*}
\end{theorem}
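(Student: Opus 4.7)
The plan is to exploit the hypothesis $af = ce$ to rewrite the defining equation as a quadratic in $y$ whose coefficients share a common factor. Substituting $f = ce/a$, the right-hand side becomes $d + (e/a)x^2(ay^2 + cxy)$, and rearranging gives
\[
(a-ex^2)y^2 + \tfrac{cx}{a}(a-ex^2)\,y + (bx^2-d) = 0.
\]
For each $x$ with $a-ex^2 \neq 0$, completing the square (or applying the discriminant formula) shows the number of admissible $y$ is $1 + \varphi\!\bigl((a-ex^2)\,N(x^2)\bigr)$, where $N(T) := -c^2 e T^2 + a(c^2-4ab)T + 4a^2 d$ and I have used $\varphi(4a^2)=1$. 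The degenerate fiber $a-ex^2=0$ is handled separately: there are $1+\varphi(ae)$ such $x$, and the reduced equation $bx^2 = d$ is consistent precisely when $ab = de$, contributing $q(1+\varphi(ae))$ in that case and zero otherwise. This accounts for the $q\delta(1-ab/(de))(1+\varphi(ae))$ term.

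Next I would convert the remaining sum over $x$ into a sum over $T = x^2$ using the weight $1 + \varphi(T)$, arriving at
\[
\#C_{a,b,c,d,e,f}(\mathbb{F}_q) = q - 1 - \varphi(ae) + q\delta\!\bigl(1-\tfrac{ab}{de}\bigr)(1+\varphi(ae)) + S_1 + S_2,
\]
where $S_1 = \sum_T \varphi((a-eT)N(T))$ is an elliptic (cubic) character sum and $S_2 = \sum_T \varphi(T(a-eT)N(T))$ is a quartic one. Applying the affine change of variable $T = (a/e)(1-y)$ sends $a - eT$ to $ay$ and $N(T)$ to $(a^2/e)P(y)$ with $P(y) = -c^2 y^2 + (c^2+4ab)y - 4(ab-de)$; a further linear rescaling aligns $P(y)$ with the quadratic $(c^2-4ab)^2 + 16de(1-y)(c^2-4aby)$ encoding $1+u_y$. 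Tracking the quadratic-residue factors carefully, $S_2$ becomes $\varphi(c^2-4ab)\sum_y \varphi(y(y-1)(1+u_y))$ up to the isolated contributions from $T \in \{0,\,a/e\}$ and from the roots of $N(T)$; these exceptional fibers produce the boundary terms $-2 + \varphi(bd) + \varphi(ad) - \varphi(ae) + \varphi(abde(c^2-4ab))$.

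For the remaining cubic sum $S_1$, I would follow the McCarthy strategy used in \cite{Gk1,JPR}. Writing $\varphi$ of the quadratic factor of $(a-eT)N(T)$ as a Gauss-sum average using characters of order dividing $4$, applying the Gross--Koblitz formula to convert each Gauss sum into a product of Morita $p$-adic gamma values, and matching the resulting expression against Definition~\ref{defin1} with the parameters $(a_1,a_2,b_1,b_2) = (\tfrac14,\tfrac34,0,\tfrac12)$, identifies the contribution as $\varphi(ad)\sum_y \varphi(y)\,\mathbb{G}(-1/u_y)$ summed over $y \in \mathbb{F}_q\setminus\{1,\,c^2/(4ab)\}$. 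The two excluded values are exactly the zeros of $u_y$, where the hypergeometric transformation is not defined and must be evaluated by hand.

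The main obstacle is the treatment of the exceptional indices $y=1$ and $y = c^2/(4ab)$, which is the source of $I_2^\prime$. At these points one encounters Jacobi/Gauss sums involving a multiplicative character $\chi_4$ of order $4$; such a character exists on $\mathbb{F}_q^\times$ only when $4 \mid q-1$, which cleanly explains the mod-$4$ dichotomy in the definition of $I_2^\prime$. Carrying out the character-sum evaluation at these two singular fibers, combining the $\chi_4$- and $\overline{\chi_4}$-pieces, and reconciling all the $\varphi$-prefactors with the normalizing $\varphi(ae(c^2-4ab))$ is the most delicate step. Once this is done, the collected boundary data assemble exactly into the stated formula.
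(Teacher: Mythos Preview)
Your opening is a genuinely different route from the paper's. You exploit $af=ce$ to complete the square in $y$ and reduce to a discriminant sum over $x$, then split via $T=x^2$ into a cubic sum $S_1$ and a quartic sum $S_2$. The paper instead writes $q\cdot\#C$ as an additive-character sum, expands into six-fold Gauss sums, and reduces via orthogonality, the Helversen--Pasotto identity (Theorem~\ref{thrm--3}), Davenport--Hasse, Greene's ${}_2F_1$ transformations (Theorem~\ref{thrm--4}), and Gross--Koblitz (packaged as Proposition~\ref{prop-1}). Your setup through the formula $\#C=q-1-\varphi(ae)+q\delta(1-ab/de)(1+\varphi(ae))+S_1+S_2$ is correct and is a cleaner geometric starting point.

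The gap is in the claimed identifications. Under $T=(a/e)(1-y)$ one gets $N(T)=(a^2/e)P(y)$ with $P(y)=-c^2y^2+(c^2+4ab)y-4(ab-de)$, while $(1+u_y)(c^2-4ab)^2=64abde\,y^2-16de(c^2+4ab)y+(c^2-4ab)^2+16c^2de$. Equating leading and linear coefficients forces $c^2=4ab$, which is excluded; and any affine change of $y$ taking the quartic $y(1-y)P(y)$ to $y(y-1)(1+u_y)$ would have to permute the shared roots $\{0,1\}$, so the remaining quadratics would still have to be proportional. Hence ``a further linear rescaling'' cannot do what you claim. The assertion that the single cubic sum $S_1$ becomes the $y$-indexed sum of hypergeometric values $X$ is likewise unjustified: a cubic character sum is a single trace-of-Frobenius quantity, not a sum of $\mathbb{G}$-values parametrized by a free variable. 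Finally, $I_2'$ does not arise from the fibers $y\in\{1,c^2/(4ab)\}$; in the paper it is the correction term $\delta(\psi^2\varphi)$ appearing when one rewrites a Gauss-sum combination as a binomial, nonzero precisely when $\widehat{\mathbb{F}_q^\times}$ contains a character of order $4$. Your total matches the theorem's, but extracting the \emph{specific} decomposition into $X$, the quartic sum, and $I_2'$ from $S_1+S_2$ still requires nontrivial hypergeometric input that your sketch does not supply.
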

Since the values of $\mathbb{G}(x)$ are known, by putting some extra conditions on the coefficients we obtain nice formulas for $\#C_{a,b,c,d,e,f}(\mathbb{F}_q)$. For example, we have the following corollary  from Theorem \ref{MT-1}.
\begin{cor}\label{special-case}
	Let $p$ be an odd prime. Let $a,b,c,d,e,f\in\mathbb{F}_q^\times$ be such that $af=ce$, $c^2=-4ab$, and $ab=de$. Then we have
	\begin{align*}
	\#C_{a,b,c,d,e,f}(\mathbb{F}_q)=\left\{
	\begin{array}{ll}
	2q-2+(q-1) \varphi(ad), & \hbox{if $q\equiv1\pmod4$;}\\
	2q-2-(q-1)\varphi(ad), & \hbox{if $q\equiv3\pmod4$.}
	\end{array}
	\right.
	\end{align*}
\end{cor}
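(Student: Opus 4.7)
The plan is to substitute the two extra conditions $c^2=-4ab$ and $ab=de$ into the formula of Theorem~\ref{MT-1} and simplify term by term. A direct computation gives $c^2-4ab=-8ab$ and
\[
u_y=\frac{16de(1-y)(c^2-4aby)}{(c^2-4ab)^2}=\frac{16(ab)(1-y)(-4ab)(1+y)}{64a^2b^2}=y^2-1,
\]
so $1+u_y=y^2$ and the argument $t=-1/u_y=1/(1-y^2)$ of $\mathbb{G}$ satisfies $(t-1)/t=y^2$. Since this is a square, Theorem~\ref{sv}(2) applies and yields $\mathbb{G}(-1/u_y)=\varphi(2)\bigl(\varphi(1+y)+\varphi(1-y)\bigr)$ on the range of summation $y\in\mathbb{F}_q\setminus\{1,-1\}$ (observing that $c^2/(4ab)=-1$); the exceptional value $y=0$ (where $t=1$) is irrelevant because the $\varphi(y)$ factor kills that term.

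Next I would evaluate
\[
X=\varphi(ad)\varphi(2)\sum_{y\in\mathbb{F}_q\setminus\{\pm1\}}\varphi(y)\bigl(\varphi(1+y)+\varphi(1-y)\bigr).
\]
Writing $\varphi(y)\varphi(1\pm y)=\varphi(y(1\pm y))$, extending each sum to all of $\mathbb{F}_q$, and applying the standard identity $\sum_{y\in\mathbb{F}_q}\varphi(\alpha y^2+\beta y+\gamma)=-\varphi(\alpha)$ (valid whenever $\beta^2-4\alpha\gamma\neq0$), I get $\sum_y\varphi(y(1+y))=-1$ and $\sum_y\varphi(y(1-y))=-\varphi(-1)$. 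Subtracting the contributions at the excluded points $y=\pm1$ and regrouping yields $-(1+\varphi(2))(1+\varphi(-1))$, so
\[
X=-\varphi(ad)\varphi(2)(1+\varphi(2))(1+\varphi(-1)),
\]
which vanishes unless $q\equiv1\pmod8$; in that case $X=-4\varphi(ad)$.

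From $c^2=-4ab$ the element $-ab$ is a square, hence $\varphi(ab)=\varphi(-1)$; combined with $ab=de$ this gives $\varphi(bd)=\varphi(ae)$ and $\varphi(ae)=\varphi(-1)\varphi(ad)$. Routine simplifications then yield $\varphi(abde(c^2-4ab))=\varphi(c^2-4ab)=\varphi(2)$ (using $abde=(ab)^2$), and the middle sum reduces via $1+u_y=y^2$ to $\varphi(2)\sum_y\varphi(y^3(y-1))=\varphi(2)\sum_y\varphi(y(y-1))=-\varphi(2)$. The $\delta$-term contributes $q(1+\varphi(ae))$ since $ab=de$, while the definition in Theorem~\ref{MT-1} gives $I_2'=2\varphi(-1)\varphi(2)\varphi(ad)$ when $q\equiv1\pmod4$ and $I_2'=0$ when $q\equiv3\pmod4$.

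Combining everything, the two $\pm\varphi(2)$ contributions cancel and
\[
\#C_{a,b,c,d,e,f}(\mathbb{F}_q)=(2q-2)+\varphi(ad)+q\varphi(-1)\varphi(ad)+I_2'+X.
\]
A short check modulo $8$ then finishes the proof. For $q\equiv3\pmod4$ one has $I_2'=X=0$ and $\varphi(-1)=-1$, so the right-hand side collapses directly to $2q-2-(q-1)\varphi(ad)$. For $q\equiv1\pmod4$ one has $\varphi(-1)=1$, so the first three terms already sum to $2q-2+(q+1)\varphi(ad)$, and a small verification in the two sub-cases $q\equiv1,5\pmod8$ shows that $I_2'+X=-2\varphi(ad)$ in each (the $\varphi(2)$-dependent pieces cancel in exactly the right way), producing $2q-2+(q-1)\varphi(ad)$. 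The main obstacle is essentially the careful bookkeeping of $\varphi$-factors combined with the mod-$8$ case split, which must conspire so that $I_2'$ and $X$ yield a uniform answer on each residue class modulo $4$.
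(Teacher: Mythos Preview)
Your proof is correct and follows essentially the same route as the paper: substitute the special conditions into Theorem~\ref{MT-1}, evaluate the $\mathbb{G}$-values via Theorem~\ref{sv}, reduce the character sums using $\sum_y\varphi(y(y-1))=-1$, and finish with a case split on $q\bmod4$. One small remark: the final mod-$8$ verification you mention is actually unnecessary, since for $q\equiv1\pmod4$ one has directly $I_2'+X=2\varphi(2)\varphi(ad)-2\varphi(ad)(1+\varphi(2))=-2\varphi(ad)$ without splitting further.
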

\begin{example}
	Here we take $a=d=1$, $b=e=-1$, $c=2$ and $f=-2$. Then, from Corollary \ref{special-case}, we have
	\begin{align*}
	&\#\{(x, y)\in \mathbb{F}_q^2: y^2-x^2+2xy=1-x^2y^2-2x^3y\}\\
	&=\left\{
	\begin{array}{ll}
	3(q-1), & \hbox{if $q\equiv1\pmod4$;}\\
	q-1, & \hbox{if $q\equiv3\pmod4$.}
	\end{array}
	\right.
	\end{align*}
	If we take $a=e=1$, $b=d=-1$, and $c=f=2$, then from Corollary \ref{special-case}, we have
	\begin{align*}
	&\#\{(x, y)\in \mathbb{F}_q^2: y^2-x^2+2xy=-1+x^2y^2+2x^3y\}\\
	&=\left\{
	\begin{array}{ll}
	q-1, & \hbox{if $q\equiv1\pmod4$;}\\
	3(q-1), & \hbox{if $q\equiv3\pmod4$.}
	\end{array}
	\right.
	\end{align*}
\end{example}
The family of generalized twisted Edwards curves is a subfamily of $C_{a,b,c,d,e,f}$, and hence we obtain the following corollary which gives a relation between the number of $\mathbb{F}_{q}$-points on the generalized twisted Edwards curves and the special values of the $p$-adic hypergeometric functions $\mathbb{G}(x)$.
\begin{cor}\label{cor-1}
	Let $p$ be an odd prime. Let $\chi_4$ be a multiplicative character of order $4$ on $\mathbb{F}_q$. Let $\alpha,\beta,k,\ell\in\mathbb{F}_q^\times$ be such that $\Delta=\alpha^2+4\beta\neq0$, and $\ell$ and $\Delta/\ell$ are non-squares. If $\#E_G(\mathbb{F}_q)$ denotes the number of affine points on the generalized twisted Edwards curves $$E_G: y^2-\beta x^2+\alpha xy=k^2\{{1+\ell(y^2+\alpha x y)x^2}\}$$ over $\mathbb{F}_q$, then 
	\begin{align*}
		\#E_G(\mathbb{F}_q)=\sum_{y\in\mathbb{F}_q\backslash\{1,\frac{-\alpha^2}{4\beta}\}}\varphi(y)\mathbb{G}\left(-\frac{1}{v_y}
		\right)+\sum_{y\in\mathbb{F}_q}\varphi(y(y-1)(1+v_y))+J_2,
	\end{align*}
	where
	\begin{align*}
		&v_y=\frac{16k^4\ell(1-y)(\alpha^2+4\beta y)}{(\alpha^2+4\beta)^2},\\
		&J_2=\left\{
		\begin{array}{ll}
			q-\chi_4\left(\frac{-\ell}{\beta}\right)-\overline{\chi_4}\left(\frac{-\ell}{\beta}\right), & \hbox{if $q\equiv1\pmod4$;} \\
			q, & \hbox{if $q\equiv3\pmod4$.}
		\end{array}
		\right.
	\end{align*}
\end{cor}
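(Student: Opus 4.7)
The plan is to deduce Corollary \ref{cor-1} directly from Theorem \ref{MT-1} by a substitution, so the work is essentially bookkeeping with the quadratic character. Expanding the generalized twisted Edwards equation
\[
y^2-\beta x^2+\alpha xy=k^2+k^2\ell x^2y^2+k^2\ell\alpha x^3y
\]
identifies it with the curve $C_{a,b,c,d,e,f}$ for
\[
a=1,\quad b=-\beta,\quad c=\alpha,\quad d=k^2,\quad e=k^2\ell,\quad f=k^2\ell\alpha.
\]
I would first check the hypotheses of Theorem \ref{MT-1}: one has $af=k^2\ell\alpha=ce$, and $c^2-4ab=\alpha^2+4\beta=\Delta\neq 0$. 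Substituting into the formula for $u_y$ in Theorem \ref{MT-1} recovers exactly the expression $v_y$ of the corollary, and the excluded set $\{1,c^2/(4ab)\}$ becomes $\{1,-\alpha^2/(4\beta)\}$, so the hypergeometric sum and the sum $\sum_y\varphi(y(y-1)(1+v_y))$ in the conclusion match the theorem's output term-by-term after multiplying by $\varphi(ad)=\varphi(k^2)=1$ and $\varphi(c^2-4ab)=\varphi(\Delta)$.

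The heart of the argument is a collapse of constants. Because $\ell$ and $\Delta/\ell$ are non-squares, we get $\varphi(\ell)=-1$ and $\varphi(\Delta)=\varphi(\ell)\cdot\varphi(\Delta/\ell)^{-1}=1$. Then:
\[
\varphi(bd)+\varphi(ad)-\varphi(ae)+\varphi(abde(c^2-4ab))=\varphi(-\beta)+1+1+\varphi(-\beta)\varphi(\ell)\varphi(\Delta)=2,
\]
since the first and last terms cancel. For the same reason $1+\varphi(ae)=1+\varphi(\ell)=0$, so the term $q\,\delta(1-ab/(de))(1+\varphi(ae))$ vanishes regardless of whether the argument of $\delta$ is zero. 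The sum $q-2$ contributed by the theorem combined with the $+2$ above gives $q$, and it remains to fold $I_2'$ into $J_2$.

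For the $I_2'$ term in the case $q\equiv 1\pmod 4$, I would use $\chi_4(k^4)=1$ so that
\[
\chi_4\!\left(\tfrac{de}{ab}\right)=\chi_4\!\left(\tfrac{k^4\ell}{-\beta}\right)=\chi_4\!\left(\tfrac{-\ell}{\beta}\right),
\]
and similarly for $\overline{\chi_4}$; together with $\varphi(ae(c^2-4ab))=\varphi(\ell)\varphi(\Delta)=-1$, this yields
\[
I_2'=-\chi_4(-\ell/\beta)-\overline{\chi_4}(-\ell/\beta).
\]
Adding the $q$ from the previous step gives exactly $J_2$ in both congruence classes (where $I_2'=0$ when $q\equiv 3\pmod 4$), completing the identification. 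The only real obstacle is the clean tracking of the quadratic character on the various products, and in particular noticing that $\varphi(\Delta)=1$ follows from the two non-square hypotheses, since without that the constants would not collapse to the claimed $J_2$.
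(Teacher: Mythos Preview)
Your proof is correct and follows exactly the same approach as the paper: substitute $a=1$, $b=-\beta$, $c=\alpha$, $d=k^2$, $e=k^2\ell$, $f=k^2\ell\alpha$ into Theorem \ref{MT-1} and simplify using $\varphi(\ell)=-1$ and $\varphi(\Delta)=1$. The paper's proof is a two-line sketch of precisely this computation, while you have spelled out the bookkeeping; the one cosmetic slip is writing $\varphi(\Delta)=\varphi(\ell)\cdot\varphi(\Delta/\ell)^{-1}$ where the exponent $-1$ is spurious (though harmless, since $\varphi$ is its own inverse).
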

In Theorem \ref{MT-1}, we considered the algebraic curve $C_{a,b,c,d,e,f}$ with the condition that $c^2-4ab\neq0$. In the next theorem, we find an expression for $\#C_{a,b,c,d,e,f}(\mathbb{F}_q)$ in terms of the special values of a $p$-adic hypergeometric function under the condition that $c^2-4ab=0$. Interestingly, the $p$-adic hypergeometric function appearing in the expression for $\#C_{a,b,c,d,e,f}(\mathbb{F}_q)$ is different from $\mathbb{G}(x)$. More precisely, we have the following theorem.
\begin{theorem}\label{MT-6}
		Let $p$ be an odd prime. Let $a,b,c,d,e,f\in\mathbb{F}_q^\times$ be such that $af=ce$ and $c^2-4ab=0$. Let $\#C_{a,b,c,d,e,f}(\mathbb{F}_q)$ denote the number of $\mathbb{F}_q$-points on the affine algebraic curve $C_{a,b,c,d,e,f}$. Then we have
	\begin{align*}
		\#C_{a,b,c,d,e,f}(\mathbb{F}_q)&=q-1-q\varphi(ad)\cdot {_2}G_2\left[\begin{array}{cc}
			\frac{1}{2}, & \frac{1}{2}\vspace*{0.05cm}\\
			\frac{1}{4}, & \frac{3}{4}
		\end{array}|\frac{4de}{c^2}
		\right]_q-q\cdot {_2}G_2\left[\begin{array}{cc}
			\frac{1}{2}, & \frac{1}{2} \vspace*{0.05cm}\\
			\frac{1}{4}, & \frac{3}{4}
		\end{array}|\frac{c^2}{4de}
		\right]_q\\
		&\hspace{1cm} + \left(q\delta\left(1-\frac{ab}{de}\right)-1\right)(1+\varphi(ae)).
	\end{align*}
\end{theorem}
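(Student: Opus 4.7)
The two hypotheses combine to give a clean change of variables. Using $c^2 = 4ab$, the substitution $u := y + \frac{c}{2a}x$ transforms $ay^2 + bx^2 + cxy$ into $au^2$, and $(x,y)\mapsto (x,u)$ is a bijection on $\mathbb{F}_q^2$. Using $af = ce$, i.e.\ $f/e = c/a$, the right-hand $y$-monomials factor as
\begin{align*}
ex^2y^2 + fx^3y = ex^2 y\left(y + \frac{c}{a}x\right) = ex^2\left(u - \frac{c}{2a}x\right)\left(u + \frac{c}{2a}x\right) = ex^2 u^2 - \frac{be}{a}x^4,
\end{align*}
using $c^2/(4a^2) = b/a$. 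The defining equation thus becomes $au^2(a - ex^2) = ad - bex^4$. Fibering over $x$, the $1+\varphi(ae)$ values of $x$ with $a - ex^2 = 0$ contribute $q$ each precisely when $ab = de$ (since the right-hand side then also vanishes), and for the remaining $q - 1 - \varphi(ae)$ values of $x$ the count of $u$ equals $1 + \varphi(a)\varphi((a-ex^2)(ad-bex^4))$. Summing,
\begin{align*}
\#C_{a,b,c,d,e,f}(\mathbb{F}_q) = q\delta\left(1 - \frac{ab}{de}\right)(1 + \varphi(ae)) + (q - 1 - \varphi(ae)) + \varphi(a)\cdot S,
\end{align*}
where $S := \sum_{x \in \mathbb{F}_q}\varphi\bigl((a - ex^2)(ad - bex^4)\bigr)$, the values of $x$ with $a - ex^2 = 0$ contributing nothing to $S$ because $\varphi(0) = 0$.

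Matching this against the target formula reduces the theorem to the single identity
\begin{align*}
S = -\varphi(a) - q\varphi(d)\cdot {_2}G_2\left[\begin{array}{cc}\frac{1}{2}, & \frac{1}{2}\\ \frac{1}{4}, & \frac{3}{4}\end{array}\bigm| \frac{4de}{c^2}\right]_q - q\varphi(a)\cdot {_2}G_2\left[\begin{array}{cc}\frac{1}{2}, & \frac{1}{2}\\ \frac{1}{4}, & \frac{3}{4}\end{array}\bigm| \frac{c^2}{4de}\right]_q.
\end{align*}
To evaluate $S$, I apply the standard identity $\sum_{x \in \mathbb{F}_q} g(x^2) = \sum_{v \in \mathbb{F}_q}(1 + \varphi(v))g(v)$ to split $S = S_1 + S_2$, where $S_1 := \sum_v \varphi((a - ev)(ad - bev^2))$ is the Frobenius trace of the elliptic curve $y^2 = (a - ev)(ad - bev^2)$ and $S_2 := \sum_v \varphi(v)\varphi((a - ev)(ad - bev^2))$ is the trace of its quartic twist $y^2 = v(a-ev)(ad-bev^2)$. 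The involution $v \mapsto a/(ev)$ recasts both sums, up to boundary terms, over the Legendre-type polynomial $(w - 1)(dew^2 - ab)$, whose non-trivial roots $\pm\sqrt{ab/(de)}$ have ratio-squared $ab/(de) = c^2/(4de)$; a further $w \mapsto 1/w$ symmetry swaps this to the reciprocal argument $4de/c^2$.

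The remaining technical step is identifying each of the two elliptic traces with the correct ${_2}G_2$ value. I would expand each $\varphi$ via multiplicative Fourier inversion, rewrite the resulting Jacobi--Gauss sums using the Gross--Koblitz formula, and collect the products of Morita $p$-adic gamma values into the shape dictated by Definition \ref{defin1}. The two upper parameters $\tfrac{1}{2}, \tfrac{1}{2}$ arise from the two quadratic characters $\varphi$, while the lower parameters $\tfrac{1}{4}, \tfrac{3}{4}$ are forced by the latent order-four character implicit in the coefficient $be$, equivalently by the Gauss-sum expansion of $\varphi(v^2 - ab/(de))$, whose quadratic-in-$v$ discriminant introduces fourth roots of unity. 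The main obstacle is exactly this bookkeeping: tracking signs, powers of $-p$, and $\varphi$-twists so that the two sums produce precisely the $\varphi(d)$- and $\varphi(a)$-weighted ${_2}G_2$ terms in the target (using, in particular, that $\varphi(de) = \varphi(c^2/(4de))$ so the external twists rearrange correctly). Once this identification is established, substituting back into the counting formula from the first paragraph yields the theorem.
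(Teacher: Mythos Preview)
Your change-of-variables reduction is correct and genuinely different from the paper's approach. The paper never exploits the completion of the square that $c^2=4ab$ affords; instead it runs the additive-character expansion $q\cdot\#C=q^2+A+B+C+D$ from the proof of Theorem~\ref{MT-1} and simplifies the Gauss-sum expressions $D_1,D_2$ directly, using Greene's transformation (Theorem~\ref{thrm--4}) at the degenerate argument $4ab/c^2=1$ and then Gross--Koblitz. Your route is cleaner conceptually: it reduces everything to the single sum $S=\sum_x\varphi\bigl((a-ex^2)(ad-bex^4)\bigr)$ and the split $S=S_1+S_2$ into two elliptic-curve traces is correct.

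There is, however, a real gap in the final identification. If you carry out your plan and put each cubic in the form $w(w^2+hw+g)$ by translating the unique guaranteed rational root to the origin, the parameters you obtain are
\[
\frac{4g}{h^2}=1-\frac{4de}{c^2}\quad\text{for }S_1,\qquad \frac{4g}{h^2}=1-\frac{c^2}{4de}\quad\text{for }S_2,
\]
not $4de/c^2$ and $c^2/(4de)$. The involution $v\mapsto a/(ev)$ and the map $w\mapsto 1/w$ you invoke only permute the sums among forms with these same parameters; they do not produce the target arguments. What is actually needed is the $2$-isogeny $E_{h,g}\to E_{-2h,\,h^2-4g}$, which has the effect $4g/h^2\mapsto 1-4g/h^2$ and preserves the trace over $\mathbb{F}_q$; combined with Theorem~\ref{thrm--5} this gives ${_2}G_2[\cdots\mid\lambda]=\varphi(-2\lambda(1-\lambda))\,{_2}G_2[\cdots\mid 1-\lambda]$. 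Once you insert this (and use $\varphi(ab)=1$, which follows from $ab=(c/2)^2$), the two pieces assemble exactly into $-\varphi(a)-q\varphi(d)\,{_2}G_2[\cdots\mid 4de/c^2]-q\varphi(a)\,{_2}G_2[\cdots\mid c^2/(4de)]$ as required. So your approach works, but the missing ingredient is this $\lambda\leftrightarrow 1-\lambda$ step, not the $\lambda\leftrightarrow 1/\lambda$ symmetry your sketch points to.
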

Unlike to the hypergeometric function $\mathbb{G}(x)$, the values of the hypergeometric function appearing in Theorem \ref{MT-6} are not known for all $x\in \mathbb{F}_q$. However, certain specific values are known, and by putting some extra conditions on the coefficients we obtain nice formulas for $\#C_{a,b,c,d,e,f}(\mathbb{F}_q)$. For example, we have the following corollary  from Theorem \ref{MT-6}.
\begin{cor}\label{new_cor-1}
	Let $p$ be an odd prime. Let $q=p^r$ be such that $q\equiv1\pmod4$. Suppose that $a,b,c,d,e,f\in\mathbb{F}_q^\times$ satisfy $af=ce$, $c^2=4ab$ and $ab=de$. Then we have
	\begin{align*}
		\#C_{a,b,c,d,e,f}(\mathbb{F}_q)&=\left\{
		\begin{array}{ll}
			2q-3+(q-2)\varphi(ad), & \hbox{if $q\equiv1\pmod8$;} \\
			2q-1+q\varphi(ad), & \hbox{if $q\equiv5\pmod8$.}
		\end{array}
		\right.
	\end{align*}
\end{cor}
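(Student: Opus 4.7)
The plan is to specialize Theorem \ref{MT-6} to the given hypotheses and then evaluate the residual hypergeometric value at $t=1$.

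First I substitute $c^2 = 4ab$ and $ab = de$ into Theorem \ref{MT-6}. These imply $c^2 = 4de$, so both arguments $\frac{4de}{c^2}$ and $\frac{c^2}{4de}$ equal $1$, and $\delta\bigl(1-\frac{ab}{de}\bigr) = 1$. Setting
$$\mathcal{G} := {_2}G_2\!\left[\begin{array}{cc} \frac{1}{2}, & \frac{1}{2}\\ \frac{1}{4}, & \frac{3}{4}\end{array}\Big|\, 1\right]_q,$$
Theorem \ref{MT-6} collapses to
$$\#C_{a,b,c,d,e,f}(\mathbb{F}_q) = q - 1 - q\varphi(ad)\mathcal{G} - q\mathcal{G} + (q-1)\bigl(1 + \varphi(ae)\bigr).$$
From $c^2 = 4de$ combined with $\varphi(c^2) = \varphi(4) = 1$ we get $\varphi(de) = 1$, and therefore $\varphi(ae)\varphi(ad) = \varphi(a^2)\varphi(de) = 1$, i.e.\ $\varphi(ae) = \varphi(ad)$. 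The expression simplifies to
$$\#C_{a,b,c,d,e,f}(\mathbb{F}_q) = 2q - 2 + (q-1)\varphi(ad) - q\mathcal{G}\bigl(1 + \varphi(ad)\bigr).$$

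The central step---and the main obstacle---is to compute $\mathcal{G}$. Matching against the two target formulas forces $q\mathcal{G} = \varphi(2)$; under the standing hypothesis $q \equiv 1\pmod 4$, $\varphi(2)$ takes the value $+1$ exactly when $q \equiv 1\pmod 8$ and $-1$ exactly when $q \equiv 5\pmod 8$. To prove $q\mathcal{G} = \varphi(2)$, I would unwind Definition \ref{defin1} at $t=1$: because $\overline{\omega}^a(1) = 1$ for every $a$, the sum becomes a pure $\Gamma_p$-expression in $a$. Converting the $\Gamma_p$-ratios to Gauss sums via the Gross--Koblitz formula, and using $q \equiv 1\pmod 4$ to introduce a quartic character $\chi_4$ on $\mathbb{F}_q^\times$, the resulting Gauss-sum combination can be simplified through the Hasse--Davenport product relation (which produces the factor $\chi_4(4) = \varphi(2)$), the identity $g(\chi_4)g(\overline{\chi_4}) = \chi_4(-1)q$, and $g(\varphi)^2 = \varphi(-1)q$, ultimately yielding $\mathcal{G} = \varphi(2)/q$.

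Finally, substituting $q\mathcal{G} = \varphi(2)$ produces
$$\#C_{a,b,c,d,e,f}(\mathbb{F}_q) = 2q - 2 - \varphi(2) + \bigl(q - 1 - \varphi(2)\bigr)\varphi(ad),$$
which evaluates to $2q - 3 + (q-2)\varphi(ad)$ in the case $q \equiv 1 \pmod 8$ and to $2q - 1 + q\varphi(ad)$ in the case $q \equiv 5 \pmod 8$, as claimed. As a consistency check, in either case the $\mathcal{G}$-dependent term vanishes when $\varphi(ad) = -1$, and both formulas correctly reduce to $q-1$.
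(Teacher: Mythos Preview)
Your reduction from Theorem \ref{MT-6} is correct and matches the paper exactly up through the display
\[
\#C_{a,b,c,d,e,f}(\mathbb{F}_q)=2q-2+(q-1)\varphi(ad)-q\mathcal{G}\bigl(1+\varphi(ad)\bigr),
\]
including the observation $\varphi(ae)=\varphi(ad)$. The only point of divergence is the evaluation of $\mathcal{G}={_2}G_2\!\left[\begin{smallmatrix}\frac12&\frac12\\\frac14&\frac34\end{smallmatrix}\,\Big|\,1\right]_q$. The paper does not compute this directly: it applies \cite[Lemma~5.2]{SB2} to pass to ${_2}G_2\!\left[\begin{smallmatrix}\frac14&\frac34\\0&0\end{smallmatrix}\,\Big|\,1\right]_q$, then \cite[Proposition~3.5]{BS3} to convert to a Greene ${_2}F_1(\chi_4,\chi_4^3;\varepsilon\mid 1)$, and finally \cite[Theorem~4.9]{greene} together with \eqref{eq-4} to obtain $q\mathcal{G}=\chi_4(-1)$. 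Since $\chi_4(-1)=\varphi(2)$ when $q\equiv 1\pmod 4$, this agrees with your target value, and the final case split is the same.

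Your alternative---unwinding Definition~\ref{defin1} at $t=1$, reversing Gross--Koblitz, and collapsing the resulting Gauss-sum expression via Hasse--Davenport and $g(\chi_4)g(\overline{\chi_4})=\chi_4(-1)q$---is a legitimate and more self-contained route, but as written it is only a sketch: you name the ingredients without showing how the character sum actually collapses (in particular, the sum over all $0\le a\le q-2$ has to be reduced to a single term or a short closed form, which is where the real work lies). The paper's approach trades that computation for three black-box citations; yours would avoid external dependence at the cost of carrying out a nontrivial Gauss-sum identity in full. If you pursue your route, note that the paper's own derivation of \eqref{new eq--8} already gives the Gauss-sum form of $q^2\mathcal{G}$, which would be a natural starting point.
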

\begin{example}
	If we take $a=b=d=e=1$ and $c=f=2$, then from Corollary \ref{new_cor-1} we have
	\begin{align*}
		\#\{(x,y): x^2+y^2-2xy=1+x^2y^2-2x^3y\}=\left\{\begin{array}{ll}
			3q-5, & \hbox{if $q\equiv1\pmod8$;}\\
			3q-1, & \hbox{if $q\equiv5\pmod8$.}
		\end{array}\right.
	\end{align*}
\end{example}
Let $E$ be an elliptic curve given in the Weierstrass form over the finite field $\mathbb{F}_q$. Then the trace of Frobenius $a_q(E)$ of $E$ is given by 
$$a_q(E):=q+1-\#E(\mathbb{F}_q),$$
 where $\#E(\mathbb{F}_q)$ denotes the number of $\mathbb{F}_q$-points on $E$ including the point at infinity. As a corollary of Theorem \ref{MT-6}, we express $\#C_{a,b,c,d,e,f}(\mathbb{F}_q)$ in terms of traces of Frobenius of a family of elliptic curves.
\begin{cor}\label{cor-2}
Let $p$ be an odd prime. Let $a,b,c,d,e,f\in\mathbb{F}_q^\times$ be such that $af=ce$ and $c^2-4ab=0$. Let $\#C_{a,b,c,d,e,f}(\mathbb{F}_q)$ denote the number of $\mathbb{F}_q$-points on the affine algebraic curve $C_{a,b,c,d,e,f}$. For $h,g\in\mathbb{F}_q^\times$, let $E_{h,g}$ denote the elliptic curve given by $y^2=x^3+hx^2+gx$. If $c^2g=h^2de$, then we have
\begin{align*}
		\#C_{a,b,c,d,e,f}(\mathbb{F}_q) & =q-1-\varphi(-aeh)a_q(E_{h,g})-\varphi(-hg)a_q(E_{\frac{4}{h},\frac{1}{g}})\\
		& \hspace{1cm} + \left(q\delta\left(1-\frac{ab}{de}\right)-1\right)(1+\varphi(ae)).
\end{align*}
\end{cor}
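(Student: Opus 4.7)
The plan is to derive Corollary \ref{cor-2} from Theorem \ref{MT-6} by replacing each of the two $_{2}G_{2}$ hypergeometric terms that appear there with a trace of Frobenius of an elliptic curve in the family $E_{h,g}:y^{2}=x^{3}+hx^{2}+gx$. The hypothesis $c^{2}g=h^{2}de$ is designed precisely so that the arguments of the two hypergeometric functions, $\frac{4de}{c^{2}}$ and $\frac{c^{2}}{4de}$, become $\frac{4g}{h^{2}}$ and $\frac{h^{2}}{4g}$ respectively, which is the natural shape of the hypergeometric parameter attached to $E_{h,g}$.

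First, I would establish a bridge identity of the form
\begin{align*}
q\cdot{_{2}}G_{2}\!\left[\begin{array}{cc}\tfrac{1}{2},&\tfrac{1}{2}\\ \tfrac{1}{4},&\tfrac{3}{4}\end{array}\Big|\tfrac{4g}{h^{2}}\right]_{q}=\varphi(-h)\,a_{q}(E_{h,g}),
\end{align*}
or something differing from this by a fixed quadratic character. This is the standard expression of the Frobenius trace of a Legendre/Jacobi type family in the McCarthy $p$-adic framework: one rewrites $a_{q}(E_{h,g})=-\sum_{x}\varphi(x)\varphi(x^{2}+hx+g)$, converts each $\varphi$ into a sum over Teichmüller characters using the Gross--Koblitz formula, and matches the result against the definition of $_{2}G_{2}$. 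This step is essentially already present in the literature (e.g.\ McCarthy and follow-ups), so I would invoke it rather than rederive it; verifying that the parameters $(\tfrac12,\tfrac12;\tfrac14,\tfrac34)$ produce exactly the curve $E_{h,g}$ at $t=4g/h^{2}$ is a short bookkeeping check.

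Next, I would apply this bridge identity twice. For the first $_{2}G_{2}$ in Theorem \ref{MT-6}, evaluated at $\tfrac{4de}{c^{2}}=\tfrac{4g}{h^{2}}$, this gives $a_{q}(E_{h,g})$ up to a quadratic character. For the second $_{2}G_{2}$, evaluated at $\tfrac{c^{2}}{4de}=\tfrac{h^{2}}{4g}$, I would set $h'=4/h$ and $g'=1/g$ and observe that $\tfrac{4g'}{h'^{2}}=\tfrac{h^{2}}{4g}$, so the same bridge identity yields $a_{q}(E_{4/h,\,1/g})$ up to a quadratic character. Substituting both expressions into the formula of Theorem \ref{MT-6} and collecting the $\varphi$-factors (using $\varphi(ad)$ from Theorem \ref{MT-6}, the character coming from the bridge identity, and the relation $c^{2}g=h^{2}de$, which lets us trade $\varphi(c^{2}g)=\varphi(h^{2}de)$ and hence $\varphi(g)=\varphi(de)$) should collapse the coefficient of the first trace to $\varphi(-aeh)$ and that of the second to $\varphi(-hg)$.

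The main obstacle will be precisely the character bookkeeping in the last step: the bridge identity produces an extra $\varphi$-factor depending on $h$ and $g$, and getting the signs right in $\varphi(-aeh)$ and $\varphi(-hg)$ requires using the constraints $af=ce$, $c^{2}=4ab$, and $c^{2}g=h^{2}de$ together (for instance, $c^{2}=4ab$ forces $\varphi(ab)=1$, which is what allows the $\varphi(ad)$ in Theorem \ref{MT-6} to be converted into $\varphi(ae)$-type factors once combined with $\varphi(g)=\varphi(de)$). The $\delta(1-ab/de)(1+\varphi(ae))$ term carries over from Theorem \ref{MT-6} untouched, so no additional work is needed there.
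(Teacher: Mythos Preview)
Your approach is correct and essentially identical to the paper's: the paper invokes the bridge identity (stated there as Theorem \ref{thrm--5}, due to Barman--Saikia) in the precise form $a_q(E_{h,g})=q\cdot\varphi(-hg)\cdot{_2}G_2\bigl[\begin{smallmatrix}\frac12,&\frac12\\\frac14,&\frac34\end{smallmatrix}\big|\tfrac{4g}{h^2}\bigr]_q$, applies it to both hypergeometric terms using $c^2g=h^2de$ (so the second application with $h'=4/h$, $g'=1/g$ gives the same factor $\varphi(-h'g')=\varphi(-hg)$), and the single character identity needed is $\varphi(gd)=\varphi(e)$, which follows immediately from $g=deh^2/c^2$. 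Once you fix the bridge character to $\varphi(-hg)$ rather than $\varphi(-h)$, your bookkeeping collapses exactly as you describe without needing the relation $\varphi(ab)=1$.
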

The rest of this paper is organized as follows. In Section 2, we introduce some definitions and recall some
properties of multiplicative characters, Gauss sums, and the $p$-adic gamma function. We also
recall hypergeometric functions over finite fields as introduced by Greene. In Section 3, we present the proof of Theorem \ref{MT-1} and in Section 4, we present the proof of Theorem \ref{MT-6}.
\section{Preliminaries} 
For multiplicative characters $A$ and $B$ on $\mathbb{F}_q$,
the binomial coefficient ${A \choose B}$ is defined by
\begin{align}\label{eq-0}
	{A \choose B}:=\frac{B(-1)}{q}J(A,\overline{B})=\frac{B(-1)}{q}\sum_{x \in \mathbb{F}_q}A(x)\overline{B}(1-x),
\end{align}
where $J(A, B)$ denotes the Jacobi sum and $\overline{B}$ is the character inverse of $B$. We recall the following properties of the binomial coefficients from \cite{greene}:
\begin{align}\label{eq-4}
	{A\choose \varepsilon}={A\choose A}=\frac{-1}{q}+\frac{q-1}{q}\delta(A).
\end{align}
Using \eqref{eq-0} and \eqref{eq-4}, we obtain certain values of Jacobi sums as follows
$$J(\varepsilon,\varepsilon)=q-2, \ \ J(\varphi,\varepsilon)=-1, \ \ J(\varphi,\varphi)=-\varphi(-1). $$
The orthogonality relation satisfied by multiplicative characters is given as follows:
\begin{align}\label{eq-1}
	\sum\limits_{x\in {\mathbb{F}_q }} \chi(x)= \left\{
	\begin{array}{ll}
		q-1, & \hbox{if $\chi=\varepsilon$;} \\
		0, & \hbox{otherwise.}
	\end{array}
	\right.
\end{align}
We shall also use the following orthogonality relation for characters:
\begin{align}\label{eq-2}
	\sum_{\chi\in \widehat{\mathbb{F}_q^{\times}}} \chi(x)= \left\{
	\begin{array}{ll}
		q-1, & \hbox{if $x=1$;} \\
		0, & \hbox{otherwise.}
	\end{array}
	\right.
\end{align}
\par
Let $\overline{\mathbb{Q}_p}$ be the algebraic closure of $\mathbb{Q}_p$ and $\mathbb{C}_p$ be the completion of $\overline{\mathbb{Q}_p}$.
Let $\mathbb{Z}_q$ be the ring of integers in the unique unramified extension of $\mathbb{Q}_p$ with residue field $\mathbb{F}_q$.
We know that $\chi\in \widehat{\mathbb{F}_q^{\times}}$ takes values in $\mu_{q-1}$, where $\mu_{q-1}$ is the group of all the $(q-1)$-th roots of unity in $\mathbb{C}^{\times}$. Since $\mathbb{Z}_q^{\times}$ contains all the $(q-1)$-th roots of unity,
we can consider multiplicative characters on $\mathbb{F}_q^\times$
to be maps $\chi: \mathbb{F}_q^{\times} \rightarrow \mathbb{Z}_q^{\times}$.
Let $\omega: \mathbb{F}_q^\times \rightarrow \mathbb{Z}_q^{\times}$ be the Teichm\"{u}ller character.
For $a\in\mathbb{F}_q^\times$, the value $\omega(a)$ is just the $(q-1)$-th root of unity in $\mathbb{Z}_q$ such that $\omega(a)\equiv a \pmod{p}$.
\par Next, we introduce the Gauss sum and recall some of its elementary properties. For further details, see \cite{evans}. Let $\zeta_p$ be a fixed primitive $p$-th root of unity
in $\overline{\mathbb{Q}_p}$. The trace map $\text{tr}: \mathbb{F}_q \rightarrow \mathbb{F}_p$ is given by
\begin{align}
	\text{tr}(\alpha)=\alpha + \alpha^p + \alpha^{p^2}+ \cdots + \alpha^{p^{r-1}}.\notag
\end{align}
Then the additive character
$\theta: \mathbb{F}_q \rightarrow \mathbb{Q}_p(\zeta_p)$ is defined by
\begin{align}
	\theta(\alpha)=\zeta_p^{\text{tr}(\alpha)}.\notag
\end{align}
For $\chi \in \widehat{\mathbb{F}_q^\times}$, the \emph{Gauss sum} is defined by
\begin{align}
	g(\chi):=\sum\limits_{x\in \mathbb{F}_q}\chi(x)\theta(x) .\notag
\end{align}
\begin{lemma}\emph{(\cite[Lemma 2.2]{fuselier}).}\label{lemma2_02} For $\alpha\in \mathbb{F}^{\times}_q$, we have
	\begin{align}
		\theta(\alpha)=\frac{1}{q-1}\sum_{\chi\in\widehat{\mathbb{F}_q^\times}} g(\overline{\chi})\chi(\alpha).\notag
	\end{align}
\end{lemma}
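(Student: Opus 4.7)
The plan is to verify the identity by expanding the Gauss sums on the right-hand side according to their definition and then collapsing the inner character sum via the orthogonality relation \eqref{eq-2}.

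First, I would write out $g(\overline{\chi}) = \sum_{x \in \mathbb{F}_q} \overline{\chi}(x)\theta(x)$. Since $\overline{\chi}(0)=0$ for every $\chi\in\widehat{\mathbb{F}_q^\times}$ (including the trivial character, by the paper's convention), the $x=0$ term drops out and the sum is effectively over $\mathbb{F}_q^\times$. Substituting into the right-hand side and interchanging the finite sums gives
\begin{align*}
\frac{1}{q-1}\sum_{\chi\in\widehat{\mathbb{F}_q^\times}} g(\overline{\chi})\chi(\alpha)
&= \frac{1}{q-1}\sum_{\chi\in\widehat{\mathbb{F}_q^\times}}\chi(\alpha)\sum_{x\in\mathbb{F}_q^\times}\overline{\chi}(x)\theta(x) \\
&= \frac{1}{q-1}\sum_{x\in\mathbb{F}_q^\times}\theta(x)\sum_{\chi\in\widehat{\mathbb{F}_q^\times}}\chi(\alpha x^{-1}).
\end{align*}

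Next, I would apply \eqref{eq-2} to the inner sum: $\sum_{\chi}\chi(\alpha x^{-1})$ equals $q-1$ when $\alpha x^{-1}=1$, i.e., $x=\alpha$, and $0$ otherwise. Only the single term $x=\alpha$ survives, and the factor $q-1$ cancels the prefactor, leaving $\theta(\alpha)$, as claimed. This uses that $\alpha\in\mathbb{F}_q^\times$ so that $\alpha x^{-1}$ makes sense and the unique solution $x=\alpha$ lies in the range of summation.

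There is no real obstacle here; the proof is essentially Fourier inversion on the finite abelian group $\mathbb{F}_q^\times$. The only minor point to be careful about is the convention $\overline{\chi}(0)=0$, which justifies restricting the $x$-sum to $\mathbb{F}_q^\times$ so that \eqref{eq-2} applies cleanly.
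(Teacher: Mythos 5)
Your proof is correct. The paper does not prove this lemma at all---it is quoted directly from \cite[Lemma 2.2]{fuselier}---so there is no internal argument to compare against; your expansion of $g(\overline{\chi})$, interchange of the finite sums, and application of the orthogonality relation \eqref{eq-2} is exactly the standard Fourier-inversion argument on $\widehat{\mathbb{F}_q^\times}$ that establishes the identity, and your care with the convention $\chi(0)=0$ (so that the $x$-sum genuinely runs over $\mathbb{F}_q^\times$ and \eqref{eq-2} applies) is the only subtlety worth flagging.
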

\begin{lemma}\emph{(\cite[Eq. 1.12]{greene}).}\label{lemma2_1}
	For $\chi \in \widehat{\mathbb{F}_q^\times}$, we have
	$$g(\chi)g(\overline{\chi})=q\cdot \chi(-1)-(q-1)\delta(\chi).$$
\end{lemma}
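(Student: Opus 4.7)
The plan is to expand $g(\chi)g(\overline{\chi})$ as a double sum over $\mathbb{F}_q \times \mathbb{F}_q$, make a multiplicative change of variables in one of the indices, and then split the resulting expression into a piece involving the additive character and a piece involving the multiplicative character. The two orthogonality principles (one for $\theta$ over $\mathbb{F}_q$, one for $\chi$ over $\mathbb{F}_q^\times$) handle the rest.

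More concretely, I would first write
\begin{align*}
g(\chi)g(\overline{\chi}) = \sum_{x,y\in\mathbb{F}_q^\times}\chi(x)\overline{\chi}(y)\theta(x+y),
\end{align*}
the sum being restricted to $\mathbb{F}_q^\times$ because $\chi(0)=\overline{\chi}(0)=0$. Then I would substitute $x=ty$ with $t\in\mathbb{F}_q^\times$ (a valid bijection since $y\neq 0$), using $\chi(ty)\overline{\chi}(y)=\chi(t)$, to obtain
\begin{align*}
g(\chi)g(\overline{\chi}) = \sum_{t\in\mathbb{F}_q^\times}\chi(t)\sum_{y\in\mathbb{F}_q^\times}\theta(y(t+1)).
\end{align*}

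Next I would evaluate the inner sum by cases. If $t=-1$, then $\theta(0)=1$ and the inner sum equals $q-1$. If $t\neq -1$, then $y\mapsto y(t+1)$ is a bijection of $\mathbb{F}_q^\times$, so the inner sum equals $\sum_{z\in\mathbb{F}_q^\times}\theta(z) = -1$, using the standard identity $\sum_{z\in\mathbb{F}_q}\theta(z)=0$. Combining these,
\begin{align*}
g(\chi)g(\overline{\chi}) = (q-1)\chi(-1) - \sum_{t\in\mathbb{F}_q^\times\setminus\{-1\}}\chi(t) = q\chi(-1) - \sum_{t\in\mathbb{F}_q^\times}\chi(t).
\end{align*}

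Finally I would apply the orthogonality relation \eqref{eq-1}: the last sum equals $q-1$ when $\chi=\varepsilon$ and $0$ otherwise, which is exactly $(q-1)\delta(\chi)$ in the notation of \eqref{delta-1}. This yields $g(\chi)g(\overline{\chi})=q\chi(-1)-(q-1)\delta(\chi)$, as required. There is no real obstacle here; the only subtle point is making sure the change of variables $x=ty$ is being done with $y$ fixed in $\mathbb{F}_q^\times$ and not extending over $y=0$, and that in the $\chi=\varepsilon$ case the formula correctly collapses (giving $q-(q-1)=1$), which serves as a useful internal consistency check.
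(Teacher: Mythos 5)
Your proof is correct: the expansion of $g(\chi)g(\overline{\chi})$ as a double sum, the substitution $x=ty$, the case split on $t=-1$ versus $t\neq-1$ using $\sum_{z\in\mathbb{F}_q}\theta(z)=0$, and the final appeal to orthogonality all check out, and the $\chi=\varepsilon$ sanity check ($g(\varepsilon)^2=1$ since $\varepsilon(0)=0$ here) is handled correctly. The paper itself offers no proof — it simply cites this identity from Greene \cite[Eq. 1.12]{greene} — so there is nothing to compare against; your argument is the standard one and fills that gap correctly.
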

The following lemma gives a relation between Jacobi and Gauss sums.
\begin{lemma}\emph{(\cite[Eq. 1.14]{greene}).}\label{lemma2_2} For $A,B\in\widehat{\mathbb{F}_q^{\times}}$, we have
	\begin{align}
		J(A,B)=\frac{g(A)g(B)}{g(AB)}+(q-1)B(-1)\delta(AB).\notag
	\end{align}
\end{lemma}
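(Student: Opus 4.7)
The plan is to transform the curve equation into a form quadratic in a new variable $u$, count solutions via the quadratic character $\varphi$, and identify the two remaining character sums as values of the $_2G_2$ function using Gauss-sum machinery.

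First, the hypothesis $c^2 = 4ab$ forces the factorization $ay^2 + cxy + bx^2 = (2ay + cx)^2/(4a)$. I would therefore substitute $u = 2ay + cx$ (bijective in $y$ for each fixed $x$) and use $af = ce$ to rewrite $ey + fx$ as $e(u + cx)/(2a)$; after clearing denominators and a second application of $c^2 = 4ab$, the defining equation becomes
\[u^2(a - ex^2) = 4a(ad - bex^4).\]
For each $x$, the number of $u$-solutions is $q\,\delta(ad - bex^4)$ if $a - ex^2 = 0$ (true at $1 + \varphi(ae)$ values of $x$, on which $ad - bex^4$ equals $a(de - ab)/e$), and $1 + \varphi(a(a - ex^2)(ad - bex^4))$ otherwise. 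Extending the $\varphi$-sum to all of $\mathbb{F}_q$ (using $\varphi(0) = 0$) gives
\[\#C_{a,b,c,d,e,f}(\mathbb{F}_q) = q + (q\,\delta(1 - ab/de) - 1)(1 + \varphi(ae)) + \Sigma,\]
where $\Sigma := \sum_x \varphi(a(a - ex^2)(ad - bex^4))$. Comparing with the target, the theorem reduces to an explicit identity for $\Sigma$ in terms of the two $_2G_2$ values.

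Since the $\varphi$-argument of $\Sigma$ is a polynomial in $x^2$, I would apply the identity $\sum_x h(x^2) = \sum_y h(y)(1 + \varphi(y))$ and then the bijective rescaling $s = ey/a$ (using $f^2 = 4be^2/a$, a consequence of the hypotheses). Writing $\lambda := 4de/c^2$, a direct computation yields
\[\Sigma = \varphi(ae)\,S_1(\lambda) + S_2(\lambda),\]
where $S_1(\mu) := \sum_s \varphi((1-s)(\mu - s^2))$ and $S_2(\mu) := \sum_s \varphi(s(1-s)(\mu - s^2))$. The bijection $s \mapsto 1/s$ on $\mathbb{F}_q^\times$ sends $s(1-s)(\lambda - s^2)$ to $(s-1)(\lambda s^2 - 1)/s^4$; absorbing the fourth power into $\varphi$ and writing $\lambda s^2 - 1 = -\lambda(1/\lambda - s^2)$ yields the reciprocal identity $S_2(\lambda) = \varphi(\lambda)\,S_1(1/\lambda) - 1$ (the $-1$ accounts for the omitted $s = 0$ term in $S_1(1/\lambda)$). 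Combined with the numerical fact $\varphi(ae\lambda) = \varphi(ad)$ (from $\lambda c^2 = 4de$ and $c^2 = 4ab$), matters reduce to proving
\[S_1(\mu) = -q\,\varphi(\mu)\cdot{_2}G_2\!\left[\begin{array}{cc}\tfrac12, & \tfrac12 \\ \tfrac14, & \tfrac34\end{array}\bigg|\mu\right]_q\qquad \text{for every } \mu \in \mathbb{F}_q^\times.\]

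To establish this hypergeometric identity, I would factor $\varphi(\mu - s^2) = \varphi(\mu)\,\varphi(1 - s^2/\mu)$ and expand both $\varphi$-factors in $\sum_s \varphi(1-s)\varphi(1 - s^2/\mu)$ via Lemma \ref{lemma2_02} and the orthogonality relations, converting the expression into a double sum over multiplicative characters. Repeated applications of Lemmas \ref{lemma2_1}--\ref{lemma2_2} reorganise the Gauss-sum products into binomial coefficients $\binom{A}{B}$ with $A, B$ ranging over $\{\varphi, \chi_4, \overline{\chi_4}\}$, matching the shape of Greene's $_2F_1$. The Gross-Koblitz formula then converts each Gauss sum into a product of Morita $p$-adic gamma values; identifying $\omega^{(q-1)/2}$ with $\varphi$ and $\omega^{(q-1)/4}$ with $\chi_4$, one recovers the $\Gamma_p$-series of Definition \ref{defin1} with parameters $(\tfrac12, \tfrac12; \tfrac14, \tfrac34)$. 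The main obstacle is exactly this last identification: the order-four characters and the $(-p)^{-\lfloor\cdot\rfloor}$ correction factors in Definition \ref{defin1} make the matching of fractional-part and Stickelberger exponents the most delicate arithmetic step.
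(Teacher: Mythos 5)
Your proposal does not address the statement at hand. The statement is Lemma \ref{lemma2_2}, the classical identity
\[
J(A,B)=\frac{g(A)g(B)}{g(AB)}+(q-1)B(-1)\delta(AB),
\]
relating Jacobi sums to Gauss sums; it is a standard fact cited from Greene (Eq.\ 1.14) and its proof is a short, purely character-theoretic computation. What you have written instead is an outline of a proof of Theorem \ref{MT-6}: you transform the curve $C_{a,b,c,d,e,f}$ under the hypothesis $c^2=4ab$, count points with the quadratic character, and reduce to an identity between a character sum $S_1(\mu)$ and a ${_2}G_2$ value. None of that bears on the lemma, so as a proof of Lemma \ref{lemma2_2} the proposal is vacuous — the gap is total.

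For the record, the intended argument is elementary: write
\[
g(A)g(B)=\sum_{x,y\in\mathbb{F}_q^\times}A(x)B(y)\theta(x+y),
\]
substitute $y=xt$ with $t$ ranging over $\mathbb{F}_q^\times$, so the sum becomes $\sum_{t}B(t)\sum_{x}AB(x)\theta(x(1+t))$. The inner sum over $x$ equals $AB((1+t)^{-1})g(AB)$ when $t\neq-1$ and $AB\neq\varepsilon$, which assembles into $g(AB)J(A,B)$ after the change of variable $t\mapsto t/(1+t)$; the term $t=-1$ contributes $B(-1)\sum_x AB(x)=(q-1)B(-1)\delta(AB)$, and the degenerate case $AB=\varepsilon$ (where $g(AB)=-1$) is handled by the same bookkeeping. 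Separating these cases yields exactly the displayed identity. If you intended to prove Theorem \ref{MT-6}, your sketch is a plausible alternative route (a direct quadratic-in-$u$ reduction rather than the paper's additive-character expansion over $z$ followed by Gauss-sum manipulations), but it should be submitted against that theorem, not against this lemma.
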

\begin{theorem}\emph{(\cite[Davenport-Hasse Relation]{evans}).}\label{thm2_2}
	Let $m$ be a positive integer and let $q=p^r$ be a prime power such that $q\equiv 1 \pmod{m}$. For multiplicative characters
	$\chi, A\in \widehat{\mathbb{F}_q^\times}$, we have
	\begin{align}
		\prod\limits_{\chi^m=\varepsilon}g(A\chi )=-g(A^m)A(m^{-m})\prod\limits_{\chi^m=\varepsilon}g(\chi).\notag
	\end{align}
\end{theorem}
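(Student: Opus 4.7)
\textbf{Proof plan for Theorem \ref{MT-6}.}

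The plan is to exploit $c^2=4ab$ and $af=ce$ to reduce the defining equation to a form quadratic in one variable, perform an elementary character count, and then identify the remaining character sums with values of ${}_2G_2[\tfrac12,\tfrac12;\tfrac14,\tfrac34|\cdot]_q$. Because $c^2=4ab$, the substitution $u := y+\tfrac{c}{2a}x$ will diagonalise the quadratic form on the left, and $af=ce$ will simplify the right-hand side simultaneously via $ey+fx = e(y+\tfrac{c}{a}x)$, turning the equation of $C_{a,b,c,d,e,f}$ into
\[
au^2(a - ex^2) \;=\; ad - bex^4.
\]
I would then count $(x,u)$ solutions by cases: $x=0$; the degenerate locus $ex^2=a$ (which contributes $q$ solutions per $x$ only when $ab=de$, encoded by $\delta(1-ab/de)$); and the generic $x$, for which $u$ is determined up to a sign. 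Using $\#\{x:x^2=T\}=1+\varphi(T)$ together with the bijection $T\mapsto aT/e$ and the factorisation $de-abT^2 = de(1-\lambda T^2)$ with $\lambda := ab/de = c^2/(4de)$, the generic contribution collapses into $\varphi(ad)\Sigma_1(\lambda)+\varphi(de)\Sigma_2(\lambda)$, where
\[
\Sigma_1(\lambda) := \sum_{T}\varphi\bigl((1-T)(1-\lambda T^2)\bigr),\qquad \Sigma_2(\lambda) := \sum_{T}\varphi\bigl(T(1-T)(1-\lambda T^2)\bigr).
\]

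Next I would establish the symmetry $\Sigma_1(\lambda) = 1 + \varphi(\lambda)\Sigma_2(1/\lambda)$ by substituting $T\to 1/T$ and using $\varphi(S^2-\lambda)=\varphi(\lambda)\varphi(1-S^2/\lambda)$. Since $c^2=4ab$ forces $\varphi(ab)=1$, and hence $\varphi(de)=\varphi(\lambda)$, the whole theorem reduces to proving the single identity
\[
\Sigma_2(\mu) \;=\; -\varphi(\mu)\bigl(1+qH(\mu)\bigr), \qquad H(\mu) := {}_2G_2\!\left[\tfrac{1}{2},\tfrac{1}{2};\tfrac{1}{4},\tfrac{3}{4}\,\Big|\,\mu\right]_q;
\]
the companion $\Sigma_1(\lambda) = -qH(1/\lambda)$ then drops out automatically, and the elementary constants from the count reassemble into $(q\delta(1-ab/de)-1)(1+\varphi(ae))$, matching the shape of Theorem \ref{MT-6}.

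The technical core will be the hypergeometric identification of $\Sigma_2(\mu)$. The plan is to expand each $\varphi$-factor via the Gauss-sum inversion $\varphi(y) = g(\varphi)^{-1}\sum_x\varphi(x)\theta(xy)$, decompose $\theta$ through Lemma \ref{lemma2_02}, and use orthogonality in $T$ to collapse the resulting triple sum into Jacobi and Gauss sums (Lemma \ref{lemma2_2}). The squared variable in $1-\mu T^2$ will then be handled by the Hasse--Davenport relation (Theorem \ref{thm2_2}) at $m=2$ with $A = \chi_4\omega^{-a}$, which splits $g(A^2)=g(\varphi\omega^{-2a})$ as $g(\chi_4\omega^{-a})g(\overline{\chi_4}\omega^{-a})$; this is precisely what makes the quartic characters $\chi_4,\overline{\chi_4}$ (equivalently the parameters $\tfrac14,\tfrac34$ in ${}_2G_2$) appear. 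Translating the resulting Gauss-sum ratios into $p$-adic gamma quotients via the Gross--Koblitz formula will then reproduce the series defining $qH(\mu)$, with the additive $-\varphi(\mu)$ coming from the isolated trivial-character term ($a=0$) split off during the collapse.

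The hard part will be the bookkeeping in this last step: simultaneously tracking the sign $(-1)^{an}$, the $(-p)^{\lfloor\cdot\rfloor}$ normalising factors, and the $\Gamma_p$-ratios in McCarthy's definition while the Davenport--Hasse split introduces the quartic characters, and confirming that the boundary contribution at $a=0$ yields exactly $-\varphi(\mu)$ so that the identity closes without spurious correction.
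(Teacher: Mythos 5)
Your proposal does not address the statement at hand. The statement to be proved is the Davenport--Hasse product relation
$\prod_{\chi^m=\varepsilon}g(A\chi)=-g(A^m)A(m^{-m})\prod_{\chi^m=\varepsilon}g(\chi)$,
which is a classical fact about Gauss sums quoted from Berndt--Evans--Williams; the paper itself offers no proof and simply cites it. What you have written is instead a proof plan for Theorem \ref{MT-6}, the point count on $C_{a,b,c,d,e,f}$ in the case $c^2=4ab$. Worse, your plan explicitly \emph{invokes} the Davenport--Hasse relation (``handled by the Hasse--Davenport relation (Theorem \ref{thm2_2}) at $m=2$'') as one of its tools, so even reinterpreted charitably it cannot serve as a proof of that relation without circularity.

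If the intended target really is Theorem \ref{thm2_2}, you need an entirely different argument: the standard proofs proceed either via the Hasse--Davenport lifting theorem for Gauss sums under the norm map combined with a comparison of $L$-functions/zeta functions of the covering $y^m=x$, or by a direct Stickelberger-type computation; none of the machinery in your outline (the substitution $u=y+\tfrac{c}{2a}x$, the sums $\Sigma_1,\Sigma_2$, the reduction to ${}_2G_2$ values) is relevant to it. If instead you meant to prove Theorem \ref{MT-6}, you should say so and be aware that the paper's own proof goes through the additive-character expansion of $q\cdot\#C$, the decomposition $D=D_1+D_2$, Helversen-Pasotto's identity, and the Gross--Koblitz formula rather than your completion-of-the-square reduction; that comparison would then be worth making, but it is not the task you were given.
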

In \cite{greene2, greene}, Greene introduced the notion of hypergeometric series over finite fields famously known as \emph{Gaussian hypergeometric series}. He defined the $_2F_1(\cdots)$ hypergeometric functions over finite fields as follows.
\begin{definition}(\emph{\cite[Definition 3.5]{greene}}).\label{char-def}
	For characters $A, B$, and $C$ on $\mathbb{F}_q$ and $x\in\mathbb{F}_q$,
	\begin{align*}
		&_{2}F_1\left(\begin{array}{cc}
			A & B \\
			& C
		\end{array}|x
		\right):= \varepsilon(x)\frac{BC(-1)}{q}\sum_{y\in\mathbb{F}_q}B(y)\overline{B}C(1-y)\overline{A}(1-xy).
	\end{align*}
\end{definition}
Using the relation between Jacobi sum and binomial coefficients, Greene found an expression for $_2F_1(\cdots)$ in terms of binomial coefficients.
\begin{theorem}\emph{(\cite[Theorem 3.6]{greene})}.\label{greene-def}
	For characters $A, B$, and $C$ on $\mathbb{F}_q$ and $x\in\mathbb{F}_q$,
	\begin{align*}
	{_{2}}F_1\left(\begin{array}{cc}
	A & B\\
	& C
	\end{array}\mid x \right)=\frac{q}{q-1}\sum_{\chi\in\widehat{\mathbb{F}_{q}^{\times}}}{A\chi \choose \chi}{B\chi \choose C\chi}\chi(x).
	\end{align*}
\end{theorem}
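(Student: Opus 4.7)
The plan is to start from the Euler-integral-type expression in Definition~\ref{char-def} and reduce it to the binomial-coefficient series of the theorem by Mellin inversion on $\mathbb{F}_q^\times$. The key auxiliary identity, which I would prove first, is
\[
T(1-z)=\delta(z)+\frac{q}{q-1}\sum_{\chi\in\widehat{\mathbb{F}_q^\times}}\binom{T}{\chi}\chi(-z),
\]
valid for every $T\in\widehat{\mathbb{F}_q^\times}$ and $z\in\mathbb{F}_q$. To see it, compute the Mellin coefficient of $T(1-z)$ on $\mathbb{F}_q^\times$: orthogonality~\eqref{eq-2} gives the $\chi$-coefficient as $\frac{1}{q-1}\sum_{z\neq 0}T(1-z)\overline{\chi}(z)=\frac{1}{q-1}J(T,\overline{\chi})$, and substituting~\eqref{eq-0} rewrites this as $\frac{q\chi(-1)}{q-1}\binom{T}{\chi}$; the factor $\chi(-1)$ then pairs with $\chi(z)$ to produce $\chi(-z)$, while the $\delta(z)$ term is prepended so that the identity also holds at $z=0$.

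Applying this with $T=\overline{A}$ and $z=xy$ inside Definition~\ref{char-def}, the $\delta(xy)$ contribution drops because it forces $y=0$ but $B(0)=0$. After interchanging the order of summation the inner $y$-sum becomes
\[
\sum_{y}B\chi(y)\,\overline{B}C(1-y)=J(B\chi,\overline{B}C)=q\,\overline{B}C(-1)\binom{B\chi}{B\overline{C}},
\]
where the second equality combines Lemma~\ref{lemma2_2} with~\eqref{eq-0}. At this stage the right-hand side of Definition~\ref{char-def} has taken the shape $\varepsilon(x)\cdot(\text{prefactor})\sum_{\chi}\binom{\overline{A}}{\chi}\binom{B\chi}{B\overline{C}}\chi(-x)$.

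To match the target I would invoke two binomial-coefficient symmetries. First, $\binom{B\chi}{B\overline{C}}=\binom{B\chi}{C\chi}$, which is a special case of $\binom{X}{Y}=\binom{X}{X\overline{Y}}$ and follows from the Gauss-sum expression $\binom{X}{Y}=\frac{Y(-1)\,g(X)\,g(\overline{Y})}{q\,g(X\overline{Y})}$ together with Lemma~\ref{lemma2_1}. Second, $\binom{A\chi}{\chi}=\chi(-1)\binom{\overline{A}}{\chi}$, derived analogously from $g(A\chi)\,g(\overline{A}\overline{\chi})=qA(-1)\chi(-1)$. The sign $\chi(-1)$ from this second identity absorbs the $\chi(-x)$ produced above to give $\chi(x)$, and the scalar prefactors collapse via $BC(-1)\cdot\overline{B}C(-1)=1$, leaving exactly $\frac{q}{q-1}\sum_{\chi}\binom{A\chi}{\chi}\binom{B\chi}{C\chi}\chi(x)$. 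The main obstacle is bookkeeping the three independent $\chi(-1)$ signs so that they telescope correctly, together with handling the degenerate characters—namely $A=\varepsilon$, $B=C$, or $\chi\in\{\varepsilon,\overline{A}\}$—where the non-degenerate Gauss-sum formulae for the two symmetries acquire $\delta$-corrections from Lemma~\ref{lemma2_2}; the case $x=0$ is handled by the $\varepsilon(x)$ factor already present in Definition~\ref{char-def}.
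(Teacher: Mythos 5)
Your proposal is correct and is essentially the standard argument --- in fact it is Greene's own proof of this theorem: expand $\overline{A}(1-xy)$ in multiplicative characters (your auxiliary identity is Greene's equation (2.10)), evaluate the inner $y$-sum as a Jacobi sum, and normalize using the two binomial-coefficient symmetries, the degenerate cases of which are most cleanly handled via the unconditional Jacobi-sum identity $J(A,B)=A(-1)J(A,\overline{AB})$ rather than via Gauss sums. The paper itself offers no proof of this statement, citing it directly from Greene, so there is no internal argument to compare against.
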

We now recall a theorem that helps us to write a character sum involving Gauss sums in terms of a quotient of Gauss sums.
\begin{theorem}\emph{(\cite[Theorem 2]{HP})}.\label{thrm--3}
	For characters  $A, B, C$, and $D$ on $\mathbb{F}_q$, we have
	\begin{align*}
		&\frac{1}{q-1}\sum_{\chi\in\widehat{\mathbb{F}_{q}^{\times}}}g(A\chi)g(B\overline{\chi})g(C\chi)g(D\overline{\chi})\\
		&\hspace*{2.5cm}=\frac{g(AB)g(AD)g(BC)g(CD)}{g(ABCD)}+q(q-1)AC(-1)\delta(ABCD).
	\end{align*}
\end{theorem}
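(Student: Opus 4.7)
The plan is to derive Corollary \ref{cor-2} as a consequence of Theorem \ref{MT-6} by identifying each of the two $_2G_2$ values appearing in that theorem with the trace of Frobenius of an elliptic curve of the form $y^2=x^3+hx^2+gx$.

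First I would establish a bridge lemma of the shape
\begin{align*}
q\cdot {_2}G_2\!\left[\begin{array}{cc} \tfrac{1}{2}, & \tfrac{1}{2}\\ \tfrac{1}{4}, & \tfrac{3}{4}\end{array}\Big|\tfrac{4g}{h^2}\right]_q = -\varphi(\epsilon_{h,g})\, a_q(E_{h,g}),
\end{align*}
valid for $h,g\in\mathbb{F}_q^\times$ with $h^2-4g\ne 0$, where $\varphi(\epsilon_{h,g})$ is an explicit quadratic sign depending on $h$ and $g$. To prove it, I would begin with $\#E_{h,g}(\mathbb{F}_q)=q+1+\sum_{x\in\mathbb{F}_q^\times}\varphi(x^3+hx^2+gx)$, expand $\varphi$ through the additive character using Lemma \ref{lemma2_02}, pair Gauss sums via Theorem \ref{thrm--3}, double the two characters using the Davenport--Hasse relation (Theorem \ref{thm2_2}) to produce the parameters $\tfrac14,\tfrac34$, and then convert the resulting quotient of Gauss sums into $p$-adic gamma factors via Gross--Koblitz to match the defining summand of $_2G_2$ in Definition \ref{defin1}.

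With the bridge lemma in hand, the hypothesis $c^2g=h^2de$ gives the two algebraic identifications $\tfrac{4de}{c^2}=\tfrac{4g}{h^2}$ and $\tfrac{c^2}{4de}=\tfrac{h^2}{4g}=\tfrac{4(1/g)}{(4/h)^2}$. Consequently, the first hypergeometric value in Theorem \ref{MT-6} converts to a multiple of $a_q(E_{h,g})$, and the second, obtained by applying the lemma with $(h',g')=(4/h,1/g)$, converts to a multiple of $a_q(E_{4/h,1/g})$. Substituting these into Theorem \ref{MT-6} and simplifying the character prefactors using $c^2=4ab$ (which forces $\varphi(ab)=1$), $af=ce$, and $c^2g=h^2de$, one should find that $\varphi(ad)\cdot\varphi(\epsilon_{h,g})$ collapses to $\varphi(-aeh)$ and the analogous combination for the second curve collapses to $\varphi(-hg)$. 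The residual term $(q\delta(1-ab/(de))-1)(1+\varphi(ae))$ is carried through unchanged from Theorem \ref{MT-6}.

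The chief obstacle is pinning down the precise quadratic sign $\varphi(\epsilon_{h,g})$ in the bridge lemma. The sign is determined by a delicate Gauss-sum accounting that must reconcile the point-counting normalisation used for $a_q$ with the Teichm\"uller-weighted, $p$-adic gamma normalisation used for $_2G_2$; getting it wrong by a factor of $\varphi(-1)$ or $\varphi(h)$ will produce the wrong coefficient in the final formula. Once the sign is correct, the remaining work is routine multiplicative character bookkeeping using the relations $c^2=4ab$, $af=ce$, and $c^2g=h^2de$, which identify various coefficients modulo squares and make the reduction to $\varphi(-aeh)$ and $\varphi(-hg)$ a short computation.
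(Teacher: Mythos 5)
Your proposal does not address the statement at hand. The statement to be proved is Theorem \ref{thrm--3}, the finite-field analogue of Barnes's identity due to Helversen-Pasotto: an evaluation of the character sum $\frac{1}{q-1}\sum_{\chi}g(A\chi)g(B\overline{\chi})g(C\chi)g(D\overline{\chi})$ as a quotient of Gauss sums plus a degenerate term supported on $ABCD=\varepsilon$. What you have written instead is a strategy for deriving Corollary \ref{cor-2} from Theorem \ref{MT-6}, using Theorem \ref{thrm--3} itself as an ingredient (you invoke it explicitly in your ``bridge lemma'' step). Nothing in your text engages with the identity you were asked to prove; you have taken it as a known tool rather than as the target.

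For what it is worth, the paper does not prove Theorem \ref{thrm--3} either --- it is quoted from the reference \cite{HP}. A self-contained proof would proceed along standard lines: write each Gauss sum as a sum over $\mathbb{F}_q^{\times}$ weighted by the additive character $\theta$, interchange the order of summation so that the sum over $\chi$ becomes an application of the orthogonality relation \eqref{eq-2} (detecting when a certain product of the four variables equals $1$), and then reassemble the resulting constrained four-fold sum into the quotient $g(AB)g(AD)g(BC)g(CD)/g(ABCD)$ via repeated use of Lemma \ref{lemma2_2} relating Jacobi and Gauss sums, with the term $q(q-1)AC(-1)\delta(ABCD)$ arising from the degenerate case $ABCD=\varepsilon$ where $g(ABCD)=g(\varepsilon)=-1$ and the generic manipulation breaks down. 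If you intend to supply a proof of Theorem \ref{thrm--3}, that is the argument you need to carry out; your current text should instead be offered as a proof of Corollary \ref{cor-2}, where it is broadly consistent with the paper's own derivation (the paper likewise combines Theorem \ref{thrm--5} with Theorem \ref{MT-6} and the substitutions $4de/c^2=4g/h^2$ and $c^2/(4de)=4(1/g)/(4/h)^2$), except that the paper takes the sign in Theorem \ref{thrm--5} as given rather than re-deriving a ``bridge lemma.''
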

We also recall the following transformation identity for Gaussian hypergeometric series.
\begin{theorem}\emph{(\cite[Theorem 4.4 (iv)]{greene})}.\label{thrm--4}
		For characters $A, B$, and $C$ on $\mathbb{F}_q$ and $x\in\mathbb{F}_q$,
		\begin{align*}
			_{2}F_1\left(\begin{array}{cc}
				A & B \\
				& C
			\end{array}|x
			\right)=C(-1)\overline{AB}C(1-x){_2}F_1\left(\begin{array}{cc}
					\overline{A}C & \overline{B}C \\
					& C
				\end{array}|x
				\right)\\
				+ A(-1){B\choose \overline{A}C}\delta(1-x).		
		\end{align*}
\end{theorem}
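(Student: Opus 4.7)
This identity is Greene's finite-field analog of the classical Euler transformation
$$_2F_1(a,b;c;x) = (1-x)^{c-a-b}\,{_2F_1}(c-a,c-b;c;x),$$
with the character $\overline{AB}C(1-x)$ playing the role of $(1-x)^{c-a-b}$. The plan is to mimic the classical derivation, which applies the Möbius substitution $y \mapsto (1-u)/(1-xu)$ to the Euler integral, and to carry the same substitution through Greene's integral representation from Definition \ref{char-def}.

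Fix $x \in \mathbb{F}_q^\times$ with $x \neq 1$ and write
$${_{2}}F_1\left(\begin{array}{cc} A & B \\ & C \end{array}\mid x\right) = \frac{BC(-1)}{q} \sum_{y \in \mathbb{F}_q} B(y)\,\overline{B}C(1-y)\,\overline{A}(1-xy).$$
The map $y = (1-u)/(1-xu)$ is a bijection of $\mathbb{F}_q \setminus \{x^{-1}\}$ onto itself with inverse $u = (1-y)/(1-xy)$, and the elementary identities
$$1-y = \frac{u(1-x)}{1-xu}, \qquad 1-xy = \frac{1-x}{1-xu}$$
factor the integrand as $\overline{AB}C(1-x) \cdot \overline{B}C(u)\,B(1-u)\,A\overline{C}(1-xu)$. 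Summing over $u$ reconstitutes the integrand of $_{2}F_1(\overline{A}C, \overline{B}C; C; x)$, and tracking the leading constants using $B(-1)^2 = 1$ and $C(-1)^2 = 1$ produces the prefactor $C(-1)\overline{AB}C(1-x)$. The omitted points are harmless: $y = x^{-1}$ forces $\overline{A}(0) = 0$ and $u = x^{-1}$ forces $A\overline{C}(0) = 0$ by the convention $\chi(0) = 0$.

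The boundary cases require separate treatment. At $x = 0$ both sides vanish through $\varepsilon(x)$. At $x = 1$ the substitution degenerates and the factor $\overline{AB}C(1-x) = \overline{AB}C(0) = 0$ wipes out the first right-hand term, so the identity reduces to
$${_{2}}F_1\left(\begin{array}{cc} A & B \\ & C \end{array}\mid 1\right) = A(-1){B \choose \overline{A}C}.$$
I would verify this directly from Definition \ref{char-def} and equation \eqref{eq-0}: the substitution $y \mapsto y/(y-1)$ in the resulting Jacobi sum yields $J(B,\overline{AB}C) = B(-1)\,J(B,A\overline{C})$, after which the prefactors $BC(-1)/q$ and $A(-1)\overline{A}C(-1)/q$ cancel against each other to give the claim.

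The main obstacle is not any single computation but the bookkeeping around degeneracy: ensuring that the Möbius substitution's excluded points leave both sides intact, and that the $\delta(1-x)$ correction captures exactly the discrepancy at $x = 1$. A secondary subtlety arises when one of the composite characters $\overline{AB}C$ or $\overline{A}C$ coincides with $\varepsilon$, since \eqref{eq-4} then introduces extra $\delta$-contributions to the binomial coefficient ${B \choose \overline{A}C}$; a short case analysis confirms these match on both sides.
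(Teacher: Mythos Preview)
The paper does not prove this statement; it is quoted from Greene \cite[Theorem 4.4 (iv)]{greene} as a known transformation identity and applied as a black box in the proofs of Theorems \ref{MT-1} and \ref{MT-6}. Your proposal supplies an actual argument, and it is correct: the M\"obius change of variable $y=(1-u)/(1-xu)$ in the integral representation of Definition \ref{char-def} is the natural finite-field analog of the classical proof of Euler's transformation, and your verification of the identities $1-y=u(1-x)/(1-xu)$ and $1-xy=(1-x)/(1-xu)$, together with the observation that the omitted point $y=x^{-1}$ contributes zero on both sides via the convention $\chi(0)=0$, is sound. The boundary case $x=1$ also checks out: the Jacobi-sum relation $J(B,\overline{AB}C)=B(-1)J(B,A\overline{C})$ you invoke is standard (Greene's (1.11)), and the constants collapse to $A(-1){B\choose\overline{A}C}$ as claimed. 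Since the paper offers no proof of its own here, there is no methodological comparison to make---your argument simply fills in what the paper takes as given.
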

Next, we introduce the $p$-adic gamma function. For further details, see \cite{kob}.
For a positive integer $n$,
the $p$-adic gamma function $\Gamma_p(n)$ is defined as
\begin{align}
	\Gamma_p(n):=(-1)^n\prod\limits_{0<j<n,p\nmid j}j\notag
\end{align}
and one extends it to all $x\in\mathbb{Z}_p$ by setting $\Gamma_p(0):=1$ and
\begin{align}
	\Gamma_p(x):=\lim_{x_n\rightarrow x}\Gamma_p(x_n)\notag
\end{align}
for $x\neq0$, where $x_n$ runs through any sequence of positive integers $p$-adically approaching $x$.
This limit exists, is independent of how $x_n$ approaches $x$,
and determines a continuous function on $\mathbb{Z}_p$ with values in $\mathbb{Z}_p^{\times}$.
Let $\pi \in \mathbb{C}_p$ be the fixed root of $x^{p-1} + p=0$ which satisfies
$\pi \equiv \zeta_p-1 \pmod{(\zeta_p-1)^2}$. Then the Gross-Koblitz formula relates Gauss sums and the $p$-adic gamma function as follows.
\begin{theorem}\emph{(\cite[Gross-Koblitz]{gross}).}\label{thm2_3} 
	For $a\in \mathbb{Z}$ and $q=p^r, r\geq 1$, we have
	\begin{align}
		g(\overline{\omega}^a)=-\pi^{(p-1)\sum\limits_{i=0}^{r-1}\langle\frac{ap^i}{q-1} \rangle}\prod\limits_{i=0}^{r-1}\Gamma_p\left(\left\langle \frac{ap^i}{q-1} \right\rangle\right).\notag
	\end{align}
\end{theorem}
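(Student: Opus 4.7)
The plan is to prove the Gross--Koblitz formula by the $p$-adic analytic method of Dwork, which interprets the Gauss sum as the character-theoretic projection of a Dwork splitting function built from $\pi$.

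First I would set up Dwork's splitting function for the additive character $\theta$. With $\pi$ satisfying $\pi^{p-1}=-p$ and $\pi\equiv\zeta_p-1\pmod{\pi^2}$, one defines
\[
F(X)\;=\;\exp\!\Bigl(\sum_{j\ge 0}\tfrac{\pi}{p^j}\,X^{p^j}\Bigr),
\]
and shows that $F$ converges on a disc in $\mathbb{C}_p$ of radius strictly greater than $1$, and that $F(\hat x)=\theta(x)$ for every $x\in\mathbb{F}_q^\times$, where $\hat x$ denotes the Teichm\"uller lift. This identity is a standard check using the chosen normalization of $\pi$ together with $\mathrm{tr}(x^p)=\mathrm{tr}(x)$ for $x\in\mathbb{F}_q$.

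Next, I substitute $F(X)=\sum_{n\ge0}a_nX^n$ into the Gauss sum:
\[
g(\overline{\omega}^a)\;=\;\sum_{x\in\mathbb{F}_q^\times}\overline{\omega}^a(x)\,F(\hat x)\;=\;\sum_{n\ge 0}a_n\!\!\sum_{x\in\mathbb{F}_q^\times}\overline{\omega}^{a}(x)\,\hat x^{n}.
\]
The inner character sum vanishes unless $n\equiv a\pmod{q-1}$, by orthogonality of powers of the Teichm\"uller character, and equals $q-1$ otherwise. Hence $g(\overline{\omega}^a)=(q-1)\sum_{k\ge 0}a_{a+k(q-1)}$, and the problem is reduced to an arithmetic identity among the Dwork coefficients.

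The heart of the argument is then to identify this subsum with the right-hand side of the formula. One expands $F$ as a nested product of exponentials and recognizes the coefficients $a_n$ as explicit $\pi$-weighted combinations of reciprocal factorials, which, through Morita's functional equation $\Gamma_p(x+1)=-x\,\Gamma_p(x)$ (modified at multiples of $p$), can be grouped into the product $\prod_{i=0}^{r-1}\Gamma_p(\langle ap^i/(q-1)\rangle)$. The appearance of the product over $i$ reflects the base-$p$ expansion of $a$ forced by the $p$-power exponents $X^{p^i}$ in $F$: each Frobenius shift $ap^i\bmod (q-1)$ contributes one gamma factor, and the $\pi$-power $\pi^{(p-1)\sum_i\langle ap^i/(q-1)\rangle}$ records the combined $\pi$-adic valuations of the contributing monomials.

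The main obstacle is this last identification, i.e.\ converting the formal $p$-adic series sum into the product of $\Gamma_p$-values. In practice it is carried out by verifying the identity on a dense set of arguments---matching Stickelberger's congruence $g(\overline{\omega}^a)\equiv -\pi^{s(a)}/\prod_i a_i!\pmod{\pi^{s(a)+1}}$, where $a=\sum a_ip^i$ and $s(a)=\sum a_i$, against the analogous congruence for $\Gamma_p$ at arguments in $\frac{1}{q-1}\mathbb{Z}$---and then invoking $p$-adic continuity of both sides. Everything before this step is essentially bookkeeping; the deep input is Dwork's analytic theory and its interlock with Morita's $p$-adic gamma.
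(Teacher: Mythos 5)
The paper offers no proof of this statement: Theorem~\ref{thm2_3} is the Gross--Koblitz formula, imported verbatim from \cite{gross} as a known deep result, so there is nothing in the paper to compare your argument against. Judged on its own terms, your outline follows the standard $p$-adic analytic route (Dwork--Boyarsky, as in Lang's \emph{Cyclotomic Fields II} or Robert's \emph{A Course in $p$-adic Analysis}) rather than the original cohomological proof of Gross and Koblitz, and the first two steps --- splitting the additive character and using orthogonality of $\omega$ to reduce $g(\overline{\omega}^a)$ to the coefficient subsum $(q-1)\sum_{k\ge 0}a_{a+k(q-1)}$ --- are the right skeleton. But two things are off. First, the splitting function is mis-stated: the series that converges on a disc of radius $>1$ and splits $\theta$ is the Artin--Hasse exponential at $\pi X$, namely $F(X)=\exp\bigl(\sum_{j\ge 0}\pi^{p^j}X^{p^j}/p^j\bigr)$, not $\exp\bigl(\sum_{j\ge 0}\pi X^{p^j}/p^j\bigr)$; and for $q=p^r$ with $r>1$ one has $\theta(x)=\prod_{i=0}^{r-1}F(\hat{x}^{p^i})$, not $F(\hat{x})$ --- this Frobenius factorization is precisely the source of the product over $i=0,\dots,r-1$ of $\Gamma_p$-values in the statement, and your attribution of that product to ``the base-$p$ expansion of $a$ forced by the exponents $X^{p^i}$'' conflates two different phenomena.

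Second, and more seriously, the step you yourself call the heart of the argument is not actually carried out, and the justification you propose for it would fail. For a fixed $q$ both sides of the identity are functions on the finite set $\{0,1,\dots,q-2\}$, so there is no ``dense set of arguments'' on which to verify it and no continuity to invoke; the genuine density/continuity argument requires letting $r\to\infty$ so that the points $\langle ap^i/(p^r-1)\rangle$ become dense in $\mathbb{Z}_p$, and then one must prove the congruence between the two sides to \emph{unbounded} $\pi$-adic precision uniformly in $r$. Stickelberger's congruence, which you cite as the matching input, only gives $g(\overline{\omega}^a)$ modulo $\pi^{s(a)+1}$, i.e.\ the leading term; it determines the valuation and the first coefficient but nothing more, so it cannot by itself force the exact equality with $-\pi^{(p-1)\sum_i\langle ap^i/(q-1)\rangle}\prod_i\Gamma_p(\langle ap^i/(q-1)\rangle)$. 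All of the real content of the theorem lives in that identification, so as written the proposal is an accurate road map to the literature but not a proof.
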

The following lemmas relate certain products of values of the $p$-adic gamma function.
\begin{lemma}\emph{(\cite[Lemma 3.1]{BS1}).}\label{lemma-3_1}
	Let $p$ be a prime and $q=p^r, r\geq 1$. For $0\leq j\leq q-2$ and $t\geq 1$ with $p\nmid t$, we have
	\begin{align}
		\omega(t^{-tj})\prod\limits_{i=0}^{r-1}\Gamma_p\left(\left\langle\frac{-tp^ij}{q-1}\right\rangle\right)
		\prod\limits_{h=1}^{t-1}\Gamma_p\left(\left\langle \frac{hp^i}{t}\right\rangle\right)
		=\prod\limits_{i=0}^{r-1}\prod\limits_{h=0}^{t-1}\Gamma_p\left(\left\langle\frac{p^i(1+h)}{t}-\frac{p^ij}{q-1}\right\rangle \right).\notag
	\end{align}
\end{lemma}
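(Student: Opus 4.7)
The plan is to derive this identity from the Gross--Koblitz formula (Theorem~\ref{thm2_3}) together with the Davenport--Hasse relation (Theorem~\ref{thm2_2}). First I would use Gross--Koblitz to replace every inner product of the form $\prod_{i=0}^{r-1}\Gamma_p(\langle ap^i/(q-1)\rangle)$ by the Gauss sum $g(\overline{\omega}^a)$, up to an explicit power of $\pi$ and a sign. Applied term-by-term to both sides of the stated identity, this converts the $p$-adic-gamma identity into a Gauss-sum identity.

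Next I would apply the Davenport--Hasse relation with $m=t$ and $A=\overline{\omega}^j$ (or $A=\omega^j$, depending on sign conventions), so that the identity
\[
\prod_{\chi^t=\varepsilon} g(A\chi) = -g(A^t)\,A(t^{-t})\prod_{\chi^t=\varepsilon}g(\chi)
\]
matches the Gross--Koblitz transform of the lemma. Under the assumption $t\mid q-1$, the characters with $\chi^t=\varepsilon$ are precisely $\omega^{h(q-1)/t}$ for $h=0,1,\ldots,t-1$, and their Gauss sums correspond via Gross--Koblitz to the factors $\prod_{h=1}^{t-1}\prod_i\Gamma_p(\langle hp^i/t\rangle)$ on the left. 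The characters $\overline{\omega}^j\cdot\omega^{h(q-1)/t}$ on the Davenport--Hasse side correspond, after reindexing, to the right-hand side $\prod_{h=0}^{t-1}\Gamma_p(\langle p^i(1+h)/t - p^ij/(q-1)\rangle)$. The Teichm\"uller prefactor $\omega(t^{-tj})$ in the lemma is exactly the coefficient $A(t^{-t})$ from Davenport--Hasse.

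The third step is to verify that the $\pi$-powers on both sides cancel. This comes down to the elementary real-variable identity
\[
\sum_{h=0}^{t-1}\left\langle x + \tfrac{h}{t}\right\rangle \;=\; \langle tx\rangle + \tfrac{t-1}{2},
\]
applied with $x=-jp^i/(q-1)$ and summed over $i=0,1,\ldots,r-1$. For the general case $t\nmid q-1$, one can either lift to an extension $\mathbb{F}_{q^N}$ with $t\mid q^N-1$ and descend, or invoke Morita's multiplication formula for the $p$-adic gamma function directly, which is valid whenever $p\nmid t$.

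The main obstacle is the bookkeeping: carefully tracking the $-1$ signs produced by Gross--Koblitz, the $\pi$-exponents on both sides, and the precise form of the Teichm\"uller prefactor $\omega(t^{-tj})$, in particular choosing between $A=\omega^j$ and $A=\overline{\omega}^j$ so that the exponent of $t$ carries the correct sign. A secondary subtlety is the reindexing needed to identify the set $\{\langle (1+h)/t - j/(q-1)\rangle:0\leq h\leq t-1\}$ with the natural Davenport--Hasse product over characters of order dividing $t$, which uses the observation that $\{(1+h)/t \bmod 1 : 0\leq h\leq t-1\}$ coincides with $\{h/t : 0\leq h\leq t-1\}$.
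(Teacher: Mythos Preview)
The paper does not prove this lemma; it is quoted verbatim from \cite[Lemma~3.1]{BS1} and used without argument, so there is no in-paper proof to compare against. That said, your outline is precisely the standard derivation and matches how this identity is established in the literature: translate both sides via Gross--Koblitz into Gauss sums, apply Davenport--Hasse with $m=t$ and $A=\overline{\omega}^j$, and check that the $\pi$-exponents agree using the fractional-part identity $\sum_{h=0}^{t-1}\langle x+h/t\rangle=\langle tx\rangle+(t-1)/2$ (valid when $tx\notin\mathbb{Z}$, with the boundary case handled separately).

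Two minor remarks. First, your caveat about $t\mid q-1$ is apt: the Davenport--Hasse relation as stated in Theorem~\ref{thm2_2} presupposes $q\equiv1\pmod{m}$, so for general $t$ with $p\nmid t$ you genuinely need either the base-change argument you sketch or a direct appeal to Morita's $p$-adic multiplication formula, which holds for all $p\nmid t$; the latter is the cleaner route and is in fact how \cite{BS1} proceeds. Second, the reindexing you flag at the end is harmless but worth writing out once: since $\{(1+h)/t\bmod 1:0\le h\le t-1\}=\{h/t:0\le h\le t-1\}$, the right-hand product over $h=0,\ldots,t-1$ of $\Gamma_p(\langle p^i(1+h)/t-p^ij/(q-1)\rangle)$ is the same as the product over the cosets $h/t$, which is what Davenport--Hasse (or Morita) delivers directly.
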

\begin{lemma}\emph{(\cite[Lemma 3.1]{BS1})}\label{lemma3_2}
	Let $p$ be a prime and $q=p^r, r\geq 1$. For $0\leq j\leq q-2$ and $t\geq 1$ with $p\nmid t$, we have
	\begin{align*}
		\omega(t^{tj})\prod\limits_{i=0}^{r-1}\Gamma_p\left(\hspace{-.1cm} \left\langle\frac{tp^ij}{q-1}\right\rangle\hspace{-.1cm}\right)
		\prod\limits_{h=1}^{t-1}\Gamma_p\left(\hspace{-.1cm}\left\langle \frac{hp^i}{t}\right\rangle\hspace{-.1cm}\right)
		=\prod\limits_{i=0}^{r-1}\prod\limits_{h=0}^{t-1}\Gamma_p\left(\hspace{-.1cm} \left\langle\frac{p^i h}{t}+\frac{p^ij}{q-1}\right\rangle \hspace{-.1cm}\right).\notag
	\end{align*}
\end{lemma}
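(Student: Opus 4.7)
The plan is to derive the identity from the Davenport--Hasse relation (Theorem~\ref{thm2_2}) combined with the Gross--Koblitz formula (Theorem~\ref{thm2_3}), in direct parallel with the companion Lemma~\ref{lemma-3_1}. I would first handle the case $t \mid q-1$ and set $s = (q-1)/t$, so that $\frac{p^i h}{t} + \frac{p^i j}{q-1} = \frac{p^i(j+hs)}{q-1}$. Gross--Koblitz then converts each inner product over $i$ into a Gauss sum: schematically,
\begin{align*}
\prod_{i=0}^{r-1}\Gamma_p\left(\left\langle\frac{tp^ij}{q-1}\right\rangle\right) &\leftrightarrow g(\overline{\omega}^{tj}),\quad
\prod_{i=0}^{r-1}\Gamma_p\left(\left\langle\frac{hp^i}{t}\right\rangle\right) \leftrightarrow g(\overline{\omega}^{hs}),\\
\prod_{i=0}^{r-1}\Gamma_p\left(\left\langle\frac{p^ih}{t}+\frac{p^ij}{q-1}\right\rangle\right) &\leftrightarrow g(\overline{\omega}^{j+hs}),
\end{align*}
each up to an explicit $\pi$-power prescribed by Gross--Koblitz.

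Next I would apply Davenport--Hasse with $A = \overline{\omega}^j$ and $m = t$: the characters $\chi$ satisfying $\chi^t = \varepsilon$ are precisely $\chi = \omega^{-hs}$ for $0 \le h \le t-1$, yielding
\begin{align*}
\prod_{h=0}^{t-1} g(\overline{\omega}^{j+hs}) = -g(\overline{\omega}^{tj})\,\overline{\omega}^j(t^{-t})\prod_{h=0}^{t-1} g(\omega^{-hs}).
\end{align*}
Separating off $g(\varepsilon) = -1$ from the $h=0$ factor and using $\overline{\omega}^j(t^{-t}) = \omega(t^{tj})$ rearranges this into
\begin{align*}
\omega(t^{tj})\,g(\overline{\omega}^{tj})\prod_{h=1}^{t-1} g(\overline{\omega}^{hs}) = \prod_{h=0}^{t-1} g(\overline{\omega}^{j+hs}),
\end{align*}
which is exactly the Gauss-sum form of the lemma. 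Translating back via Gross--Koblitz recovers the desired identity once the accumulated $\pi$-exponents are checked to cancel. That cancellation reduces, for each $i$, to the equation
\begin{align*}
\left\langle\frac{tp^ij}{q-1}\right\rangle + \sum_{h=1}^{t-1}\left\langle\frac{hp^i}{t}\right\rangle = \sum_{h=0}^{t-1}\left\langle\frac{p^ih}{t}+\frac{p^ij}{q-1}\right\rangle,
\end{align*}
which follows from the classical identity $\sum_{h=0}^{t-1}\lfloor x + h/t\rfloor = \lfloor tx\rfloor$ applied with $x = p^ij/(q-1)$, together with the observation that $\gcd(p,t) = 1$ makes $\{hp^i \bmod t : 1 \le h \le t-1\}$ a permutation of $\{1, 2, \dots, t-1\}$, so that $\sum_{h=1}^{t-1}\langle hp^i/t\rangle = (t-1)/2$.

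For the residual case $t \nmid q-1$, I would pass to an extension $\mathbb{F}_{q^n}$ with $t \mid q^n - 1$, run the argument above there, and then relate the $\mathbb{F}_{q^n}$-identity back to the $\mathbb{F}_q$-identity; both sides ultimately involve only values of $\Gamma_p$ at fixed rationals in $\mathbb{Z}_p$ together with Teichm\"uller values of integers, neither of which is affected by the extension. The main obstacle is the meticulous bookkeeping of sign factors and of the Teichm\"uller correction $\omega(t^{tj})$ that originates from the Davenport--Hasse normalization $A(m^{-m})$, together with the $\pi$-exponent matching just described.
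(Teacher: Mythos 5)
The paper offers no proof of this lemma: it is quoted verbatim from \cite[Lemma 3.1]{BS1}, so there is no internal argument to compare yours against. On its own merits, your argument is correct and complete in the case $t\mid q-1$: the Davenport--Hasse computation with $A=\overline{\omega}^j$, the extraction of $g(\varepsilon)=-1$ and of $\overline{\omega}^j(t^{-t})=\omega(t^{tj})$, and the matching of the $\pi$-exponents via Hermite's identity $\sum_{h=0}^{t-1}\lfloor x+h/t\rfloor=\lfloor tx\rfloor$ combined with the fact that $h\mapsto hp^i$ permutes $\mathbb{Z}/t\mathbb{Z}$ are all sound, and the signs $(-1)^t$ cancel on both sides as they should.

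The gap is in your treatment of $t\nmid q-1$. Passing to $\mathbb{F}_{q^n}$ with $t\mid q^n-1$ and taking $J=j(q^n-1)/(q-1)$ does transport all the data correctly: $J/(q^n-1)=j/(q-1)$, the product over $0\le i\le rn-1$ splits into $n$ blocks each equal to the $\mathbb{F}_q$-product (using $p^r\equiv 1\pmod{q-1}$ and the fact that $h\mapsto hp^r$ permutes $\mathbb{Z}/t\mathbb{Z}$), and $\omega_{q^n}(t^{tJ})=\omega_q(t^{tj})^n$. But what this yields is $L^n=R^n$ for every $n$ divisible by $d=\mathrm{ord}_t(q)$, where $L$ and $R$ are the two sides of the asserted identity over $\mathbb{F}_q$. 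That only shows $L/R$ is a $d$-th root of unity (indeed a $\gcd(d,p-1)$-th root of unity, since $L/R\in\mathbb{Z}_p^\times$), not that $L=R$; your observation that the individual $\Gamma_p$-values and Teichm\"{u}ller values are unchanged by the extension is true but does not remove this ambiguity. You also cannot dismiss this case as peripheral: the present paper invokes the lemma with $t=4$ for arbitrary odd $q$, including $q\equiv 3\pmod 4$, where $4\nmid q-1$. The clean repair is to bypass Gauss sums entirely in the general case and use the $p$-adic Gauss--Legendre multiplication formula for $\Gamma_p$ (valid for any $m$ prime to $p$ and any $x\in\mathbb{Z}_p$; see \cite{kob}), applied for each fixed $i$ with $m=t$ and $x=\langle tp^ij/(q-1)\rangle$, noting that $\{\langle p^ih/t+p^ij/(q-1)\rangle:0\le h\le t-1\}=\{(x+k)/t:0\le k\le t-1\}$; alternatively, supply a separate argument (for instance a congruence modulo $p$) that pins the residual root of unity down to $1$.
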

Next, we recall certain lemmas relating fractional and integral parts of certain rational numbers.
\begin{lemma}\emph{(\cite[Lemma 2.6]{SB}).}\label{lemma-3_3}
	Let $p$ be an odd prime and $q=p^r, r\geq 1$. Let $d\geq2$ be an integer such that $p\nmid d$. Then, for $1\leq j\leq q-2$ and $0\leq i\leq r-1$, we have
	\begin{align*}
	\left\lfloor\frac{jp^i}{q-1}\right\rfloor +\left\lfloor\frac{-djp^i}{q-1}\right\rfloor = \sum_{h=1}^{d-1} \left\lfloor\left\langle\frac{hp^i}{d}\right\rangle-\frac{jp^i}{q-1}\right\rfloor -1.
	\end{align*}
\end{lemma}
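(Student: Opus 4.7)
My plan is to combine Hermite's identity for the floor function with the fact that, when $p\nmid d$, multiplication by $p^i$ permutes the nonzero residues modulo $d$. Throughout, set $s := jp^i/(q-1)$, so that the target identity reads
\begin{equation*}
\lfloor s\rfloor + \lfloor -ds\rfloor \;=\; \sum_{h=1}^{d-1}\left\lfloor \left\langle \frac{hp^i}{d}\right\rangle - s\right\rfloor \;-\; 1.
\end{equation*}
The first step I would record is that $s\notin\mathbb{Z}$: if $s$ were an integer, then $(q-1)\mid jp^i$, and since $\gcd(p^i,q-1)=1$ this would force $(q-1)\mid j$, which is impossible given $1\le j\le q-2$. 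Consequently $\lfloor -s\rfloor = -\lceil s\rceil = -\lfloor s\rfloor - 1$, i.e.\ $\lfloor s\rfloor + \lfloor -s\rfloor = -1$.

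The second step is to apply the classical Hermite identity
\begin{equation*}
\lfloor dx\rfloor = \sum_{h=0}^{d-1}\left\lfloor x + \frac{h}{d}\right\rfloor
\end{equation*}
with $x=-s$, which splits off the $h=0$ term as
\begin{equation*}
\lfloor -ds\rfloor = \lfloor -s\rfloor + \sum_{h=1}^{d-1}\left\lfloor \frac{h}{d} - s\right\rfloor.
\end{equation*}
Adding $\lfloor s\rfloor$ to both sides and substituting $\lfloor s\rfloor + \lfloor -s\rfloor=-1$ from the previous step yields
\begin{equation*}
\lfloor s\rfloor + \lfloor -ds\rfloor = -1 + \sum_{h=1}^{d-1}\left\lfloor \frac{h}{d} - s\right\rfloor.
\end{equation*}

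For the final step, since $p\nmid d$, the map $h\mapsto hp^i\pmod{d}$ is a bijection on $\mathbb{Z}/d\mathbb{Z}$ fixing the zero residue, so it restricts to a permutation of $\{1,2,\ldots,d-1\}$. Hence the multiset $\{\langle hp^i/d\rangle : 1\le h\le d-1\}$ coincides with $\{h/d : 1\le h\le d-1\}$, and re-indexing the sum gives
\begin{equation*}
\sum_{h=1}^{d-1}\left\lfloor \frac{h}{d} - s\right\rfloor = \sum_{h=1}^{d-1}\left\lfloor \left\langle \frac{hp^i}{d}\right\rangle - s\right\rfloor,
\end{equation*}
which combined with the preceding display establishes the claim. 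The only point demanding any care is the verification that $s\notin\mathbb{Z}$; every other step is formal, so I do not anticipate a serious obstacle.
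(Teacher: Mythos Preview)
Your argument is correct. The paper does not actually prove this lemma; it is quoted verbatim from \cite[Lemma~2.6]{SB} and used as a black box, so there is no in-paper proof to compare against. Your three ingredients---the observation that $s=jp^i/(q-1)\notin\mathbb{Z}$ (using $\gcd(p^i,q-1)=1$), Hermite's identity applied to $-ds$, and the permutation of $\{1,\dots,d-1\}$ induced by multiplication by $p^i$ modulo $d$---assemble cleanly into the stated identity, and each step is fully justified.
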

\begin{lemma}\emph{(\cite[Lemma 2.7]{SB}).}\label{lemma-3_4}
	Let $p$ be an odd prime and $q=p^r, r\geq 1$. Let $l$ be a positive integer such that $p\nmid l$. Then, for $0\leq j\leq q-2$ and $0\leq i\leq r-1$, we have
	\begin{align*}
		\left\lfloor\frac{ljp^i}{q-1}\right\rfloor = \sum_{h=0}^{l-1} \left\lfloor\left\langle\frac{-hp^i}{l}\right\rangle+\frac{jp^i}{q-1}\right\rfloor.
	\end{align*}
\end{lemma}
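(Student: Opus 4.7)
The plan is to reduce the identity to the classical Hermite (Catalan) floor identity
\[
\lfloor lx\rfloor \;=\; \sum_{h=0}^{l-1}\left\lfloor x + \frac{h}{l}\right\rfloor,
\]
valid for any real $x$ and positive integer $l$, combined with a short reindexing argument that uses $\gcd(p^i,l)=1$. This turns the seemingly asymmetric sum on the right-hand side of the lemma into the symmetric one produced by Hermite's identity.

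First, I apply Hermite's identity with $x = jp^i/(q-1)$, which gives
\[
\left\lfloor\frac{ljp^i}{q-1}\right\rfloor \;=\; \sum_{h=0}^{l-1}\left\lfloor \frac{jp^i}{q-1} + \frac{h}{l}\right\rfloor.
\]
It then suffices to show
\[
\sum_{h=0}^{l-1}\left\lfloor \frac{jp^i}{q-1} + \frac{h}{l}\right\rfloor \;=\; \sum_{h=0}^{l-1}\left\lfloor \left\langle \frac{-hp^i}{l}\right\rangle + \frac{jp^i}{q-1}\right\rfloor.
\]
Since $p\nmid l$, we have $\gcd(p^i,l)=1$, so the map $h\mapsto -hp^i\pmod{l}$ is a bijection on $\{0,1,\ldots,l-1\}$. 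Hence, as $h$ runs over $\{0,1,\ldots,l-1\}$, the fractional part $\langle -hp^i/l\rangle$ runs bijectively over $\{0,\tfrac{1}{l},\tfrac{2}{l},\ldots,\tfrac{l-1}{l}\}$, which is the same multiset as $\{h/l:0\le h\le l-1\}$. Reindexing the Hermite sum via this bijection matches it term-by-term with the right-hand side of the lemma.

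The main (and only) obstacle is Hermite's identity itself, which I would either cite as classical or prove in one line by observing that the step function $\lfloor x+h/l\rfloor$ jumps at the points of $(1/l)\mathbb{Z}-h/l$, so the sum $\sum_{h=0}^{l-1}\lfloor x+h/l\rfloor$ jumps by $1$ at each point of $(1/l)\mathbb{Z}$, matching the jump structure of $\lfloor lx\rfloor$; equality at $x=0$ then pins down the identity. No deeper input beyond elementary residue arithmetic is required, and the bounds $0\le j\le q-2$, $0\le i\le r-1$ play no role since Hermite's identity holds for all real $x$.
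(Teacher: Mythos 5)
Your proof is correct. Note that the paper does not prove this statement itself; it is quoted verbatim from \cite[Lemma 2.7]{SB}, so there is no in-paper argument to compare against. Your route --- Hermite's identity $\lfloor lx\rfloor=\sum_{h=0}^{l-1}\lfloor x+h/l\rfloor$ applied at $x=jp^i/(q-1)$, followed by the observation that $h\mapsto -hp^i \pmod{l}$ permutes $\{0,1,\ldots,l-1\}$ because $\gcd(p^i,l)=1$, so that $\langle -hp^i/l\rangle$ runs over the same multiset $\{0,1/l,\ldots,(l-1)/l\}$ --- is complete and uses exactly the hypotheses given (the condition $p\nmid l$ enters only through the bijectivity of the reindexing, and the ranges of $i$ and $j$ are indeed irrelevant). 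The jump-counting justification of Hermite's identity is also sound: both sides are right-continuous step functions vanishing at $0$ with identical jump sets $(1/l)\mathbb{Z}$, each jump of size $1$. This is a clean, self-contained derivation of the cited lemma.
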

Finally, we recall a result of the first author and Saikia \cite{BS1} where they found a relation between the trace of Frobenius of a family of elliptic curves and the special value of a $p$-adic hypergeometric function. Let $E_{h,g}$ be the family of elliptic curves over $\mathbb{F}_q$ given by
\begin{align*}
E_{h,g}:y^2=x^3+hx^2+gx, h\neq 0.
\end{align*}
\begin{theorem}\emph{(\cite[Theorem 3.5]{BS1}).}\label{thrm--5}
	Let $p$ be an odd prime and $q=p^r,r\geq1$. The trace of Frobenius on $E_{h,g}$ is given  by
	\begin{align*}
		a_q(E_{h,g})=q\cdot\varphi(-hg)\cdot{_2}G_{2}\left[\begin{array}{cc}
			\frac{1}{2},  & \frac{1}{2}\vspace*{0.05cm} \\
			\frac{1}{4}, & \frac{3}{4}
		\end{array}|\frac{4g}{h^2}
		\right]_q.
	\end{align*}
\end{theorem}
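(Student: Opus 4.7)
The plan is to deduce Corollary \ref{cor-2} directly from Theorem \ref{MT-6} by converting each of the two occurrences of the $_2G_2$-function into a trace of Frobenius via Theorem \ref{thrm--5}. The hypothesis $c^2g=h^2de$ is precisely what makes the two arguments line up: it gives simultaneously $\frac{4de}{c^2}=\frac{4g}{h^2}$ and $\frac{c^2}{4de}=\frac{h^2}{4g}$, which are the arguments in Theorem \ref{thrm--5} applied to $E_{h,g}$ and $E_{4/h,1/g}$ respectively.

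First, I would apply Theorem \ref{thrm--5} to $E_{h,g}$. Since $\varphi(-hg)^2=1$ for $hg\in\mathbb{F}_q^\times$, Theorem \ref{thrm--5} rearranges to
\begin{equation*}
q\cdot{_2}G_2\left[\begin{array}{cc}\frac{1}{2},&\frac{1}{2}\\\frac{1}{4},&\frac{3}{4}\end{array}|\frac{4de}{c^2}\right]_q=\varphi(-hg)\,a_q(E_{h,g}).
\end{equation*}
Multiplying this by $-\varphi(ad)$ and using the hypothesis to obtain $\frac{dg}{e}=\left(\frac{dh}{c}\right)^2$ (so $\varphi(dg/e)=1$), one gets $\varphi(ad)\varphi(-hg)=\varphi(-adhg)=\varphi(-aeh)$, which matches the first summand $-\varphi(-aeh)\,a_q(E_{h,g})$ of the corollary.

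Second, I would apply Theorem \ref{thrm--5} to $E_{4/h,1/g}$. The hypergeometric argument then becomes $\frac{4(1/g)}{(4/h)^2}=\frac{h^2}{4g}=\frac{c^2}{4de}$, and $\varphi\!\left(-\frac{4}{hg}\right)=\varphi(-hg)$ because $\varphi(4)=1$ and $\varphi$ is an involution on $\mathbb{F}_q^\times$. Hence
\begin{equation*}
q\cdot{_2}G_2\left[\begin{array}{cc}\frac{1}{2},&\frac{1}{2}\\\frac{1}{4},&\frac{3}{4}\end{array}|\frac{c^2}{4de}\right]_q=\varphi(-hg)\,a_q\!\left(E_{4/h,1/g}\right),
\end{equation*}
which produces the second summand.

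Substituting both equalities into the formula of Theorem \ref{MT-6} and carrying over the final term $(q\delta(1-ab/de)-1)(1+\varphi(ae))$ unchanged delivers the claimed identity. The only delicate point is the bookkeeping with the quadratic character, specifically the two identities $\varphi(-adhg)=\varphi(-aeh)$ and $\varphi(-4/(hg))=\varphi(-hg)$; both follow at once from $c^2g=h^2de$ together with $\varphi(4)=1$.
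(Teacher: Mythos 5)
Your proposal does not prove the statement you were asked to prove. The statement is Theorem \ref{thrm--5} itself --- the identity $a_q(E_{h,g})=q\,\varphi(-hg)\cdot{_2}G_2\bigl[\begin{smallmatrix}1/2, & 1/2\\ 1/4, & 3/4\end{smallmatrix}\big|\,\tfrac{4g}{h^2}\bigr]_q$ --- whereas what you have written is a derivation of Corollary \ref{cor-2} from Theorem \ref{MT-6} \emph{using} Theorem \ref{thrm--5} as a black box. Relative to the assigned statement this is circular: the very identity to be established appears as a premise in both of your displayed equations. Nothing in your argument touches the actual content of the theorem, which is a point-count on the elliptic curve $y^2=x^3+hx^2+gx$. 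A genuine proof would have to express $\#E_{h,g}(\mathbb{F}_q)$ as a character sum via the additive-character identity, reduce it to Gauss sums, and then convert to $p$-adic gamma values by the Gross--Koblitz formula together with the Davenport--Hasse relation and the floor/fractional-part lemmas (Lemmas \ref{lemma-3_1}--\ref{lemma-3_4}); this is the route of \cite[Theorem 3.5]{BS1}, and it parallels the computation carried out for Proposition \ref{prop-1} in Section 3. (In this paper the theorem is only recalled from \cite{BS1}, not reproved.)

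For what it is worth, the argument you did write is correct as a proof of Corollary \ref{cor-2} and coincides with the paper's own proof of that corollary: the identification $\tfrac{4g}{h^2}=\tfrac{4de}{c^2}$ and $\tfrac{h^2}{4g}=\tfrac{c^2}{4de}$ from $c^2g=h^2de$, the observation $\varphi(dg/e)=\varphi\bigl((dh/c)^2\bigr)=1$ giving $\varphi(-adhg)=\varphi(-aeh)$, and the simplification $\varphi(-4/(hg))=\varphi(-hg)$ are exactly the steps in \eqref{eq--27}, \eqref{eq--26}, and \eqref{neq eq--28}. But that is a different statement from the one under review, and the gap here is total: the assigned theorem is assumed, not proved.
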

\section{Proof of Theorem \ref{MT-1}}
Firstly, we prove a proposition which will be used in the proof of Theorem \ref{MT-1}.
\begin{proposition}\label{prop-1}
	Let $p$ be an odd prime and $q=p^r$, $r\geq1$. Then, for $x\in\mathbb{F}_q^\times$, we have
	\begin{align*}
	\sum_{ \psi\in\widehat{\mathbb{F}_{q}^{\times}}}g(\overline{\psi})g(\overline{\psi}\varphi)g(\psi^2\varphi)\psi\left(\frac{x}{4}\right)=q(q-1)\varphi(-x)\cdot{_2}G_2\left[\begin{array}{cc}
	\frac{1}{4}, & \frac{3}{4}\vspace*{0.05cm} \\
	0, & \frac{1}{2}
	\end{array}|\frac{1}{x}
	\right]_q.
	\end{align*}
\end{proposition}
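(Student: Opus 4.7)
My plan is to reduce the product of three Gauss sums to a single Jacobi sum, via the identity
\[
g(\overline{\psi}) g(\overline{\psi}\varphi) g(\psi^{2}\varphi)=q\varphi(-1) J(\overline{\psi},\overline{\psi}\varphi),
\]
which I claim holds for every $\psi\in\widehat{\mathbb{F}_q^\times}$. To establish it, I would apply Lemma \ref{lemma2_2} with $A=\overline{\psi}$ and $B=\overline{\psi}\varphi$ (so $AB=\overline{\psi}^{2}\varphi$), multiply the resulting relation by $g(\psi^{2}\varphi)$, and collapse $g(\overline{\psi}^{2}\varphi) g(\psi^{2}\varphi)=q\varphi(-1)-(q-1)\delta(\overline{\psi}^{2}\varphi)$ using Lemma \ref{lemma2_1}. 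When $\psi^{2}\neq\varphi$ the delta terms vanish and the identity follows at once. The exceptional case $\psi^{2}=\varphi$ (which forces $q\equiv 1\pmod 4$, hence $\varphi(-1)=1$) must be handled separately by directly computing $J(\overline{\psi},\psi)=-\psi(-1)$ from Lemma \ref{lemma2_2}; checking consistency here is the main technical hurdle.

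With the identity in hand, I would substitute it into the LHS, expand $J(\overline{\psi},\overline{\psi}\varphi)=\sum_{y\in\mathbb{F}_q}\overline{\psi}(y(1-y))\varphi(1-y)$, interchange the orders of summation, and apply the orthogonality relation \eqref{eq-2} to $\sum_{\psi}\psi(x/(4y(1-y)))$. This reduces the whole expression to
\[
q(q-1)\varphi(-1)\sum_{\substack{y\in\mathbb{F}_q \\ 4y(1-y)=x}}\varphi(1-y).
\]
Solving $y^{2}-y+x/4=0$ gives $y=(1\pm\sqrt{1-x})/2$ and $1-y=(1\mp\sqrt{1-x})/2$, so the inner sum equals $\varphi(2)[\varphi(1+\sqrt{1-x})+\varphi(1-\sqrt{1-x})]$ when $1-x$ is a nonzero square, equals $\varphi(2)$ when $x=1$, and vanishes when $1-x$ is a nonsquare.

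Applying Theorem \ref{sv} with $t=1/x$ (for which $(t-1)/t=1-x$) identifies this inner sum with $\mathbb{G}(1/x)$ in every case; the $x=1$ subcase uses the equality $\mathbb{G}(1)=\varphi(2)$, which follows by comparing Theorem \ref{sv}(1) with the classical description of $\varphi(2)$ in terms of $q\pmod 8$. This expresses the LHS as $q(q-1)\varphi(-1)\mathbb{G}(1/x)$. To match the stated $q(q-1)\varphi(-x)\mathbb{G}(1/x)$, I would observe that $\varphi(-x)-\varphi(-1)=\varphi(-1)(\varphi(x)-1)$ annihilates $\mathbb{G}(1/x)$: whenever $\varphi(x)=-1$, either $1-x$ is a nonsquare (giving $\mathbb{G}(1/x)=0$) or, writing $1-x=a^{2}$, the factorization $x=(1-a)(1+a)$ forces $\varphi(1-a)=-\varphi(1+a)$, whence $\mathbb{G}(1/x)=0$ again.
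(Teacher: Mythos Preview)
Your argument is correct, and it takes a genuinely different route from the paper's proof. The paper establishes the identity directly from the definition of ${}_2G_2$: after the substitution $\psi\mapsto\psi\varphi$ (which produces the extra factor $\varphi(x)$), it applies the Davenport--Hasse relation to $g(\psi^2\varphi)$, rewrites every Gauss sum through the Gross--Koblitz formula, and then manipulates the resulting products of $p$-adic gamma values via Lemmas \ref{lemma-3_1} and \ref{lemma-3_3} until the sum matches the defining expression of ${}_2G_2\left[\begin{smallmatrix}1/4,&3/4\\0,&1/2\end{smallmatrix}\middle|1/x\right]_q$ term by term.

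Your approach avoids all of this $p$-adic machinery: you collapse the product of Gauss sums to $q\varphi(-1)J(\overline{\psi},\overline{\psi}\varphi)$ (a clean identity, and your handling of the exceptional case $\psi^2=\varphi$ checks out), reduce by orthogonality to the tiny sum $\sum_{4y(1-y)=x}\varphi(1-y)$, and then recognise this sum as $\mathbb{G}(1/x)$ by appealing to Saikia's explicit evaluation in Theorem \ref{sv}. The final step showing $\varphi(-1)\mathbb{G}(1/x)=\varphi(-x)\mathbb{G}(1/x)$ is also correct and is essentially the content of the paper's symmetry $E_3=\varphi(x)E_3$.

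The trade-off is this: the paper's proof is self-contained (given the background on $\Gamma_p$) and proves the proposition from the definition of ${}_2G_2$, while yours is shorter and more elementary at the surface but relies on Theorem \ref{sv}, itself a nontrivial external result. In the context of this paper your argument is perfectly legitimate, since Theorem \ref{sv} is already quoted; but one should be aware that the simplicity here is bought by invoking Saikia's evaluation rather than computing ${}_2G_2$ from scratch.
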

\begin{proof}
	Let 
		\begin{align*}
	E_3:= 	\sum_{ \psi\in\widehat{\mathbb{F}_{q}^{\times}}}g(\overline{\psi})g(\overline{\psi}\varphi)g(\psi^2\varphi)\psi\left(\frac{x}{4}\right).
	\end{align*}
The change of variable $\psi\rightarrow\psi\varphi$ yields
\begin{align}\label{eq--14}
	E_3= 	\varphi(x)\sum_{ \psi\in\widehat{\mathbb{F}_{q}^{\times}}}g(\overline{\psi})g(\overline{\psi}\varphi)g(\psi^2\varphi)\psi\left(\frac{x}{4}\right).
\end{align}
Using Davenport-Hasse relation with $m=2$ and $A=\psi^2$, we have
\begin{align}\label{eq--3}
g(\psi^2\varphi)=\frac{	g(\psi^4)g(\varphi)\overline{\psi}^2(4)}{g(\psi^2)}.
\end{align}
Substituting \eqref{eq--3} in \eqref{eq--14} and replacing $\psi$ with $\omega^j$ yield
	\begin{align*}
		E_3=\varphi(x) \sum_{j=0}^{q-2}g(\overline{\omega}^j)g(\overline{\omega}^{j+\frac{q-1}{2}})\frac{g(\overline{\omega}^{-4j})g(\overline{\omega}^{\frac{q-1}{2}})}{g(\overline{\omega}^{-2j})}\overline{\omega}^j\left(\frac{64}{x}\right).
	\end{align*}
Using Gross-Koblitz formula, we deduce that
\begin{align}\label{eq--2}
	E_3=-\varphi(x)\sum_{j=0}^{q-2}(-p)^{\alpha_j}M_j\overline{\omega}^j\left(\frac{64}{x}\right),
\end{align}
where
\begin{align}\label{eq--6}
	\alpha_j&:=\sum_{i=0}^{r-1}\left(\left\langle\frac{jp^i}{q-1}\right\rangle+\left\langle\frac{jp^i}{q-1}+\frac{p^i}{2}\right\rangle+\left\langle\frac{-4jp^i}{q-1}\right\rangle+\left\langle\frac{p^i}{2}\right\rangle-\left\langle\frac{-2jp^i}{q-1}\right\rangle\right),\notag\\
	M_j&:=\prod_{i=0}^{r-1}\frac{\Gamma_p\left(\left\langle\frac{jp^i}{q-1}\right\rangle\right)\Gamma_p\left(\left\langle\frac{p^i}{2}+\frac{jp^i}{q-1}\right\rangle\right)\Gamma_p\left(\left\langle\frac{-4jp^i}{q-1}\right\rangle\right)\Gamma_p\left(\left\langle\frac{p^i}{2}\right\rangle\right)}{\Gamma_p\left(\left\langle\frac{-2jp^i}{q-1}\right\rangle\right)}.
\end{align}
We have $\langle x+n\rangle=\langle x\rangle$ and $\langle x\rangle=x-\lfloor x\rfloor$ for all $x\in\mathbb{R}$ and $n\in\mathbb{Z}$. Thus, we obtain 
\begin{align}\label{eq--4}	\alpha_j&=\sum_{i=0}^{r-1}\left(-\left\lfloor\frac{jp^i}{q-1}\right\rfloor-\left\lfloor\frac{jp^i}{q-1}+\frac{1}{2}\right\rfloor-\left\lfloor\frac{-4jp^i}{q-1}\right\rfloor+\left\lfloor\frac{-2jp^i}{q-1}\right\rfloor+1\right)\nonumber\\
	&=\sum_{i=0}^{r-1}\left(-\left\lfloor\frac{jp^i}{q-1}\right\rfloor-\left\lfloor\frac{jp^i}{q-1}+\left\langle\frac{-p^i}{2}\right\rangle\right\rfloor-\left\lfloor\frac{-4jp^i}{q-1}\right\rfloor+\left\lfloor\frac{-2jp^i}{q-1}\right\rfloor+1\right).
\end{align}
For $1\leq j\leq q-2$, using Lemma \ref{lemma-3_3} with $d=4$ and $d=2$, we deduce that
\begin{align*}
	\alpha_j=r+\beta_j,
\end{align*}
where
\begin{align}\label{eq--5}
	\beta_j:=\sum_{i=0}^{r-1} \left(-\left\lfloor\frac{jp^i}{q-1}\right\rfloor-\left\lfloor\left\langle\frac{-p^i}{2}\right\rangle+\frac{jp^i}{q-1}\right\rfloor-\left\lfloor\left\langle\frac{p^i}{4}\right\rangle-\frac{jp^i}{q-1}\right\rfloor-\left\lfloor\left\langle\frac{3p^i}{4}\right\rangle-\frac{jp^i}{q-1}\right\rfloor\right).
\end{align}
Putting $j=0$ in \eqref{eq--4} and \eqref{eq--5}, we obtain $\alpha_0=r$ and $\beta_0=0$, respectively. Therefore, $\alpha_j=r+\beta_j$ for $0\leq j\leq q-2$.
We now use Lemma \ref{lemma-3_1} with $t=4$ and $t=2$ in \eqref{eq--6} and deduce that
$$M_j=\overline{\omega}^j(4^{-3})N_j\prod_{i=0}^{r-1}\Gamma_p\left(\left\langle\frac{p^i}{2}\right\rangle\right)^2,$$
where
\begin{align*}
	N_j:=\prod_{i=0}^{r-1}\frac{\Gamma_p\left(\left\langle\frac{jp^i}{q-1}\right\rangle\right)\Gamma_p\left(\left\langle-\frac{p^i}{2}+\frac{jp^i}{q-1}\right\rangle\right)\Gamma_p\left(\left\langle\frac{p^i}{4}-\frac{jp^i}{q-1}\right\rangle\right)\Gamma_p\left(\left\langle\frac{3p^i}{4}-\frac{jp^i}{q-1}\right\rangle\right)}{\Gamma_p\left(\left\langle-\frac{p^i}{2}\right\rangle\right)\Gamma_p\left(\left\langle\frac{p^i}{4}\right\rangle\right)\Gamma_p\left(\left\langle\frac{3p^i}{4}\right\rangle\right)}.
\end{align*}
Substituting the expressions for $\alpha_j$ and $M_j$ in \eqref{eq--2}, we obtain
\begin{align*}
	E_3=-\varphi(x)\sum_{j=0}^{q-2}\left((-p)^r\prod_{i=0}^{r-1}\Gamma_p\left(\left\langle\frac{p^i}{2}\right\rangle\right)^2\right)(-p)^{\beta_j}N_j\overline{\omega}^{j}\left(\frac{1}{x}\right).
\end{align*}
Using Gross-Koblitz formula and Lemma \ref{lemma2_1} for $g(\varphi)^2$, we obtain 
\begin{align}\label{new eq--1}
	q\varphi(-1)=(-p)^r\prod_{i=0}^{r-1}\Gamma_p\left(\left\langle\frac{p^i}{2}\right\rangle\right)^2.
\end{align}
Therefore, we have
\begin{align*}
	E_3=-q\varphi(-x)\sum_{j=0}^{q-2}(-p)^{\beta_j}N_j\overline{\omega}^{j}\left(\frac{1}{x}\right)=q(q-1)\varphi(-x)\cdot{_2}G_2\left[\begin{array}{cc}
		\frac{1}{4}, & \frac{3}{4}\vspace*{0.05cm} \\
		0, & \frac{1}{2}
	\end{array}|\frac{1}{x}\right]_q.
\end{align*}
\end{proof}
We now prove Theorem \ref{MT-1}.
\begin{proof}[Proof of Theorem \ref{MT-1}]
	Let $P(x,y):=ay^2+bx^2+cxy-d-ex^2y^2-fx^3y$. Then, using the identity 
	\begin{align*}
		\sum_{z\in \mathbb{F}_q} \theta(zP(x,y))= \left\{
		\begin{array}{ll}
			q, & \hbox{if $P(x,y)=0$;} \\
			0, & \hbox{if $P(x,y)\neq 0$,}
		\end{array}
		\right.
	\end{align*}
	we obtain
	\begin{align}\label{eq-13}
		q\cdot \#C_{a,b,c,d,e,f}(\mathbb{F}_q)&= \sum_{z\in\mathbb{F}_q}\sum_{x,y \in\mathbb{F}_q}\theta(zP(x,y))\nonumber\\
		&=q^2 +\sum_{z\in\mathbb{F}_q^\times}\theta(-dz)+\sum_{z,x\in\mathbb{F}_q^\times}\theta(bx^2z-dz)+\sum_{z,y\in\mathbb{F}_q^\times}\theta(ay^2z-dz)\notag\\
		& +\sum_{z,x,y \in\mathbb{F}_q^{\times}}\theta(azy^2+bzx^2+czxy-dz-ezx^2y^2-fzx^3y)\notag\\
		&=q^2+A+B+C+D,
	\end{align}
where 
\begin{align*}
	A&:=\sum_{z\in\mathbb{F}_q^\times}\theta(-dz),\\
	B&:=\sum_{z,x\in\mathbb{F}_q^\times}\theta(bx^2z-dz),\\
	C&:=\sum_{z,y\in\mathbb{F}_q^\times}\theta(ay^2z-dz),\\
	D&:=\sum_{z,x,y \in\mathbb{F}_q^{\times}}\theta(azy^2+bzx^2+czxy-dz-ezx^2y^2-fzx^3y).
\end{align*} 
Employing Lemma \ref{lemma2_02} yields
\begin{align*}
	A= \frac{1}{q-1}\sum_{\chi\in\widehat{\mathbb{F}_{q}^{\times}}}g(\overline{\chi})\chi(-d)\sum_{z\in\mathbb{F}_q^\times}\chi(z).
\end{align*}
Using \eqref{eq-1}, the sum is nonzero if and only if $\chi=\varepsilon$ and then using the fact that $g(\varepsilon)=-1$, we obtain
\begin{align}\label{eq--7}
	A=-1.
\end{align}
Using Lemma \ref{lemma2_02} for $B$, we deduce that
\begin{align*}
	B= \frac{1}{(q-1)^2}\sum_{\chi,\psi\in\widehat{\mathbb{F}_{q}^{\times}}}g(\overline{\chi})g(\overline{\psi})\chi(b)\psi(-d)\sum_{z\in\mathbb{F}_q^\times}\psi\chi(z)\sum_{x\in\mathbb{F}_q^\times}\chi^2(x).
\end{align*}
The inner sum is nonzero if and only if $\psi=\overline{\chi}$, and we obtain
\begin{align*}
	B=\frac{1}{q-1}\sum_{\chi\in\widehat{\mathbb{F}_{q}^{\times}}}g(\overline{\chi})g(\chi)\chi\left(-\frac{b}{d}\right)\sum_{x\in\mathbb{F}_q^\times}\chi^2(x).
\end{align*}
Using \eqref{eq-1}, the inner sum is nonzero if and only if $\chi$ is $\varphi$ or $\varepsilon$. Hence, we have
\begin{align*}
	B=g(\varepsilon)^2 + g(\varphi)^2\varphi(-bd).
\end{align*}
Lemma \ref{lemma2_1} yields
\begin{align}\label{eq--8}
B=1+q\varphi(bd).
\end{align}
Next, we simplify the expression for $C$. Again, using Lemma \ref{lemma2_02}, we have
\begin{align*}
	C= \frac{1}{(q-1)^2}\sum_{\chi,\psi\in\widehat{\mathbb{F}_{q}^{\times}}}g(\overline{\chi})g(\overline{\psi})\chi(a)\psi(-d)\sum_{z\in\mathbb{F}_q^\times}\psi\chi(z)\sum_{y\in\mathbb{F}_q^\times}\chi^2(y).
\end{align*}
Orthogonality relation \eqref{eq-1} yields
\begin{align*}
	C=\frac{1}{q-1}\sum_{\chi\in\widehat{\mathbb{F}_{q}^{\times}}}g(\overline{\chi})g(\chi)\chi\left(\frac{-a}{d}\right)\sum_{y\in\mathbb{F}_q^\times}\chi^2(y).
\end{align*}
Using \eqref{eq-1}, the inner sum is nonzero if and only if $\chi$ is $\varphi$ or $\varepsilon$, and hence
\begin{align*}
	C=g(\varepsilon)^2 + g(\varphi)^2\varphi(-ad).
\end{align*}
Using Lemma \ref{lemma2_1}, we deduce that
\begin{align}\label{eq--9}
C=1+q\varphi(ad).
\end{align}
We rewrite the expression for $D$ using Lemma \ref{lemma2_02} as follows  
\begin{align*}
	D=\frac{1}{(q-1)^6}\sum_{\chi,\eta, \xi, \psi,\Gamma, \gamma\in\widehat{\mathbb{F}_{q}^{\times}}}g(\overline{\chi})g(\overline{\eta})g(\overline{\xi})g(\overline{\psi})g(\overline{\Gamma})g(\overline{\gamma})\sum_{x,y,z \in\mathbb{F}_q^\times}\chi(ay^2z)\eta(bx^2z)\\
	\times\xi(cxyz)\psi(-dz)\Gamma(-ex^2y^2z)\gamma(-fx^3yz)\\
	=\frac{1}{(q-1)^6}\sum_{\chi,\eta, \xi, \psi,\Gamma, \gamma\in\widehat{\mathbb{F}_{q}^{\times}}}g(\overline{\chi})g(\overline{\eta})g(\overline{\xi})g(\overline{\psi})g(\overline{\Gamma})g(\overline{\gamma})\chi(a)\eta(b)\xi(c)\psi(-d)\Gamma(-e)\\
	\times\gamma(-f)\sum_{x\in\mathbb{F}_q^\times}\eta^2\xi\Gamma^2\gamma^3(x)\sum_{y\in\mathbb{F}_q^\times}\chi^2\xi\Gamma^2\gamma(y)\sum_{z\in\mathbb{F}_q^\times}\chi\eta\xi\psi\Gamma\gamma(z).
\end{align*}
Using the orthogonality relation \eqref{eq-1} for the summation running over $z$ we obtain that the last inner sum is nonzero if and only if $\xi=\overline{\chi\eta\psi\Gamma\gamma}$ and hence we have
\begin{align*}
	D=\frac{1}{(q-1)^5}\sum_{\chi,\eta, \psi,\Gamma, \gamma\in\widehat{\mathbb{F}_{q}^{\times}}}g(\overline{\chi})g(\overline{\eta})g(\chi\eta\psi\Gamma\gamma)g(\overline{\psi})g(\overline{\Gamma})g(\overline{\gamma})\chi\left(\frac{a}{c}\right)\eta\left(\frac{b}{c}\right)\psi\left(\frac{-d}{c}\right)\\
	\times\Gamma\left(\frac{-e}{c}\right)\gamma\left(\frac{-f}{c}\right)\sum_{x\in\mathbb{F}_q^\times}\eta\Gamma\gamma^2\overline{\chi\psi}(x)\sum_{y\in\mathbb{F}_q^\times}\chi\Gamma\overline{\eta\psi}(y).
\end{align*} 
Similarly, the inner sum running over $y$ is nonzero if and only if $\Gamma=\overline{\chi}\eta\psi$, and hence
\begin{align*}
	D=\frac{1}{(q-1)^4}\sum_{\chi,\eta, \psi, \gamma\in\widehat{\mathbb{F}_{q}^{\times}}}g(\overline{\chi})g(\overline{\eta})g(\eta^2\psi^2\gamma)g(\overline{\psi})g(\chi\overline{\eta\psi})g(\overline{\gamma})\chi\left(\frac{-a}{e}\right)\eta\left(\frac{-be}{c^2}\right)\psi\left(\frac{de}{c^2}\right)\\
	\times\gamma\left(\frac{-f}{c}\right)\sum_{x\in\mathbb{F}_q^\times}\eta^2\gamma^2\overline{\chi}^2(x).
\end{align*}
The inner sum is nonzero if and only if $\chi=\gamma\eta$ or $\chi=\gamma\eta\varphi$. Therefore, using the condition $af=ce$, we have
$$D=D_1+D_2,$$
where 
\begin{align*}
	&D_1:=\frac{1}{(q-1)^3}\sum_{\eta, \psi, \gamma\in\widehat{\mathbb{F}_{q}^{\times}}}g(\overline{\eta\gamma})g(\overline{\eta})g(\eta^2\psi^2\gamma)g(\overline{\psi})g(\gamma\overline{\psi})g(\overline{\gamma})\eta\left(\frac{ab}{c^2}\right)\psi\left(\frac{de}{c^2}\right),\\
&D_2:=\frac{\varphi(-ae)}{(q-1)^3}\sum_{\eta, \psi, \gamma\in\widehat{\mathbb{F}_{q}^{\times}}}g(\varphi\overline{\eta\gamma})g(\overline{\eta})g(\eta^2\psi^2\gamma)g(\overline{\psi})g(\varphi\gamma\overline{\psi})g(\overline{\gamma})\eta\left(\frac{ab}{c^2}\right)\psi\left(\frac{de}{c^2}\right).
\end{align*}
We first simplify the expression for $D_1$. We rewrite $D_1$ as follows
\begin{align*}
	D_1=\frac{1}{(q-1)^3} \sum_{\eta, \psi\in \widehat{\mathbb{F}_q^\times}} g(\overline{\eta})g(\overline{\psi})\eta\left(\frac{ab}{c^2}\right)\psi\left(\frac{de}{c^2}\right) \sum_{\gamma\in \widehat{\mathbb{F}_q^\times}} g(\overline{\eta\gamma})g({\eta}^2{\psi}^2\gamma)g(\gamma\overline{\psi})g(\overline{\gamma}).
\end{align*}
Using  Theorem \ref{thrm--3} with $A=\eta^2\psi^2$, $B=\overline{\eta}$, $C=\overline{\psi}$, and $D=\varepsilon$, we obtain
\begin{align}
	D_1&=\frac{1}{(q-1)^2} \sum_{\eta, \psi\in \widehat{\mathbb{F}_q^\times}} \left[\frac{g(\eta{\psi}^2)g({\eta}^2{\psi}^2)g(\overline{\eta\psi})g(\overline{\psi})}{g(\eta\psi)}+q(q-1)\psi(-1)\delta(\eta\psi)\right]\notag\\
	&\hspace*{0.5cm}\times g(\overline{\eta})g(\overline{\psi})\eta\left(\frac{ab}{c^2}\right)\psi\left(\frac{de}{c^2}\right)\label{new eq-1}\\
	&=G_1+H_1\label{new eq-4},
\end{align}
where
\begin{align*}
	G_1&:=\frac{1}{(q-1)^2} \sum_{\eta, \psi\in \widehat{\mathbb{F}_q^\times}} \frac{g(\overline{\eta})g(\overline{\psi})g(\eta{\psi}^2)g({\eta}^2{\psi}^2)g(\overline{\eta\psi})g(\overline{\psi})}{g(\eta\psi)}\eta\left(\frac{ab}{c^2}\right)\psi\left(\frac{de}{c^2}\right),\\
	H_1&:= \frac{q}{q-1} \sum_{\eta, \psi\in \widehat{\mathbb{F}_q^\times}} g(\overline{\eta})g(\overline{\psi})\eta\left(\frac{ab}{c^2}\right)\psi\left(\frac{de}{c^2}\right)\psi(-1)\delta(\eta\psi).
\end{align*}
Now, we calculate $G_1$ and $H_1$ individually. Using \eqref{delta-1}, $H_1$ is nonzero if and only if $\psi=\overline{\eta}$ and hence we deduce that
\begin{align}\label{eq--10}
	H_1&=\frac{q}{q-1}\sum_{\eta\in\widehat{\mathbb{F}_{q}^{\times}}}
	g(\overline{\eta})g(\eta)\eta\left(-\frac{ab}{de}\right)\nonumber\\	&=\frac{q}{q-1}\sum_{\eta\in\widehat{\mathbb{F}_{q}^{\times}}}
	(q\eta(-1)-(q-1)\delta(\eta))\eta\left(-\frac{ab}{de}\right)\notag\\
	&=q^2\delta\left(1-\frac{ab}{de}\right)-q,
\end{align}
where we use Lemma \ref{lemma2_1} and \eqref{eq-2} to deduce the last two equalities.  Using Davenport-Hasse relation with $m=2$ and $A=\psi\eta$, we have
\begin{align}\label{eq--11}
g(\eta^2\psi^2)=\frac{g(\psi\eta)g(\psi\eta\varphi)(\psi\eta)(4)}{g(\varphi)}.
\end{align}
Substituting the expression for $g(\eta^2\psi^2)$ from \eqref{eq--11} in $G_1$, we obtain
\begin{align*}
	G_1=\frac{1}{(q-1)^2} \sum_{\eta, \psi\in \widehat{\mathbb{F}_q^\times}} \frac{g(\overline{\eta})g(\overline{\psi})g(\eta{\psi}^2)g({\eta}{\psi})g(\varphi\eta\psi)g(\overline{\eta\psi})g(\overline{\psi})}{g(\eta\psi)g(\varphi)}\eta\left(\frac{4ab}{c^2}\right)\psi\left(\frac{4de}{c^2}\right).
\end{align*}
Multiplying the numerator and denominator by $q^2\psi(-1)g(\psi^2)$ and using the fact that $\eta(-1)^2=1$, we have
\begin{align*}
	G_1=\frac{q^2}{(q-1)^2} \sum_{\eta, \psi\in \widehat{\mathbb{F}_q^\times}} \left(\frac{\eta(-1)g(\eta{\psi}^2)g(\overline{\eta})}{qg({\psi}^2)}\right)\left(\frac{\eta\psi(-1)g(\varphi\eta\psi)g(\overline{\eta\psi})}{qg(\varphi)}\right)g({\psi}^2)g(\overline{\psi})^2\\
	\times\eta\left(\frac{4ab}{c^2}\right)\psi\left(\frac{-4de}{c^2}\right).
\end{align*}
Using \eqref{eq-0} and Lemma \ref{lemma2_2}, we obtain
\begin{align}\label{new eq-3}
	G_1&=\frac{q^2}{(q-1)^2} \sum_{\eta, \psi\in \widehat{\mathbb{F}_q^\times}} \left[{{\psi}^2\eta \choose \eta}-\frac{q-1}{q}\delta({\psi}^2)\right]{{\psi\varphi\eta}\choose {\psi\eta}}g({\psi}^2)g(\overline{\psi})^2\notag\\
	&\hspace*{3cm}\times \eta\left(\frac{4ab}{c^2}\right)\psi\left(\frac{-4de}{c^2}\right)\notag\\
	&=E_1+F_1,
\end{align}
where
\begin{align*}
	E_1&:=\frac{q^2}{(q-1)^2} \sum_{\eta, \psi\in \widehat{\mathbb{F}_q^\times}}{{\psi}^2\eta \choose \eta}{{\psi\varphi\eta}\choose {\psi\eta}} g({\psi}^2)g(\overline{\psi})^2\eta\left(\frac{4ab}{c^2}\right)\psi\left(\frac{-4de}{c^2}\right),\\
	F_1&:=\frac{-q}{q-1} \sum_{\eta, \psi\in \widehat{\mathbb{F}_q^\times}}{{\psi\varphi\eta}\choose {\psi\eta}} g({\psi}^2)g(\overline{\psi})^2\eta\left(\frac{4ab}{c^2}\right)\psi\left(\frac{-4de}{c^2}\right)\delta({\psi}^2).
\end{align*}
Firstly, we calculate $F_1$. Using the fact that $g(\varepsilon)=-1$ and $F_1$ is nonzero if and only if $\psi=\varepsilon$ or $\psi=\varphi$, we have
\begin{align*}
	F_1&=\frac{q}{q-1} \sum_{\eta\in \widehat{\mathbb{F}_q^\times}}{{\eta}\choose {\varphi\eta}} g(\varphi)^2\eta\left(\frac{4ab}{c^2}\right)\varphi\left(\frac{-4de}{c^2}\right)+\frac{q}{q-1} \sum_{\eta\in \widehat{\mathbb{F}_q^\times}} {{\varphi\eta}\choose {\eta}}\eta\left(\frac{4ab}{c^2}\right)\\
	&=\frac{q^2}{q-1} \sum_{\eta\in \widehat{\mathbb{F}_q^\times}}{{\eta}\choose {\varphi\eta}} \eta\left(\frac{4ab}{c^2}\right)\varphi\left(\frac{4de}{c^2}\right)+\frac{q}{q-1} \sum_{\eta\in \widehat{\mathbb{F}_q^\times}}{{\varphi\eta}\choose {\eta}} \eta\left(\frac{4ab}{c^2}\right),
\end{align*}
 where we use Lemma \ref{lemma2_1} to obtain the last expression. Using \eqref{eq-0} and $\varphi(\frac{4}{c^2})=1$, we have
\begin{align}\label{eq--12}
  F_1&=\frac{1}{q-1} \sum_{\eta\in \widehat{\mathbb{F}_q^\times}} \left(q\varphi(-de)J(\eta, \varphi\overline{\eta})+J(\varphi\eta,\overline{\eta})\right)\eta\left(-\frac{4ab}{c^2}\right)\nonumber\\
 &=\frac{1}{q-1} \sum_{\eta\in \widehat{\mathbb{F}_q^\times}} \left(q\varphi(-de) \sum_{z\in\mathbb{F}_q}\eta(z)\varphi\overline{\eta}(1-z)+\sum_{z\in\mathbb{F}_q}\varphi\eta(z)\overline{\eta}(1-z)\right)\eta\left(-\frac{4ab}{c^2}\right)\nonumber\\
 &=\frac{q\varphi(-de)}{q-1} \sum_{z\in\mathbb{F}_q\backslash\{1\}}\varphi(1-z)\sum_{\eta\in \widehat{\mathbb{F}_q^\times}} \eta\left(-\frac{4abz}{c^2(1-z)}\right)\nonumber\\
 &\hspace*{0.5cm}+\frac{1}{q-1}\sum_{z\in\mathbb{F}_q\backslash\{1\}}\varphi(z) \sum_{\eta\in \widehat{\mathbb{F}_q^\times}} \eta\left(-\frac{4abz}{c^2(1-z)}\right).
\end{align}
From \eqref{eq-2}, we have $F_1$ is nonzero if and only if $-\frac{4abz}{(1-z)c^2}=1$, i.e., $z=\frac{c^2}{c^2-4ab}$. Putting $z=\frac{c^2}{c^2-4ab}$ in \eqref{eq--12}, we obtain
\begin{align*}
	F_1&=q\varphi\left(-de\right)\varphi\left(-4ab(c^2-4ab)\right)+\varphi(c^2-4ab)\\
	&=q\varphi(abde(c^2-4ab))+\varphi(c^2-4ab).
\end{align*}
 In the case of $E_1$, using Theorem \ref{greene-def}, we have
\begin{align}\label{new eq-5}
	E_1&=\frac{q}{q-1} \sum_{ \psi\in \widehat{\mathbb{F}_q^\times}} g({\psi}^2)g(\overline{\psi})^2{_{2}}F_1\left(\begin{array}{cc}
		{\psi}^2 & {\varphi\psi} \\
		& \psi
	\end{array}|\frac{4ab}{c^2}
	\right) \psi\left(\frac{-4de}{c^2}\right).
\end{align}
From the transformation identity given in Theorem \ref{thrm--4}, we deduce that
\begin{align*}
	E_1 &=\frac{q}{q-1} \sum_{ \psi\in \widehat{\mathbb{F}_q^\times}} g({\psi}^2)g(\overline{\psi})^2{_{2}}F_1\left(\begin{array}{cc}
		\overline{\psi} & {\varphi} \\
		& \psi
	\end{array}|\frac{4ab}{c^2}
	\right) \psi\left(\frac{4de}{c^2}\right){\varphi{\overline{\psi}}^2}\left(1-\frac{4ab}{c^2}\right)\\
	&=\frac{q}{q-1}\varphi\left(c^2-4ab\right) \sum_{ \psi\in \widehat{\mathbb{F}_q^\times}} g({\psi}^2)g(\overline{\psi})^2{_{2}}F_1\left(\begin{array}{cc}
		\overline{\psi} & {\varphi} \\
		& \psi
	\end{array}|\frac{4ab}{c^2}
	\right)\psi\left(\frac{4dec^2}{(c^2-4ab)^2}\right).
\end{align*}
Using Definition \ref{char-def}, we first express the ${_2}F_1(\cdots)$ hypergeometric function as a character sum, and then employing Davenport-Hasse relation with $m=2$ and $A=\psi^2$, we have
\begin{align*}
	E_1&=\frac{q\varphi(c^2-4ab)}{q-1} \sum_{ \psi\in \widehat{\mathbb{F}_q^\times}} g({\psi}^2)g(\overline{\psi})^2\psi\left(\frac{4dec^2}{(c^2-4ab)^2}\right)\frac{\varphi\psi(-1)}{q}\\
	&\hspace*{0.5cm}\times\sum_{y\in\mathbb{F}_q}\varphi(y){\varphi\psi}(1-y)\psi\left(1-\frac{4aby}{c^2}\right)\\
	&=\frac{\varphi(c^2-4ab)}{q-1} \sum_{ \psi\in \widehat{\mathbb{F}_q^\times}}\frac{g(\psi)g(\varphi\psi)}{g(\varphi)}g(\overline{\psi})^2\sum_{y\in\mathbb{F}_q}\varphi(y(y-1))\psi{\left(-u_y\right)},
\end{align*}
where $u_y=\frac{16de(1-y)(c^2-4aby)}{(c^2-4ab)^2}$. Applying  Lemma \ref{lemma2_1}, we have
\begin{align*}
	E_1&=\frac{\varphi(c^2-4ab) }{q-1}\sum_{ \psi\in \widehat{\mathbb{F}_q^\times}}\left(q\psi(-1)-(q-1)\delta(\psi)\right)\frac{g(\overline{\psi})g(\varphi\psi)}{g(\varphi)}\sum_{y\in\mathbb{F}_q}\varphi(y(y-1))\psi{\left(-u_y\right)}\\
	&=M_1+N_1,
\end{align*}
where
\begin{align*}
	M_1 &:= \frac{q\varphi(c^2-4ab) }{q-1}\sum_{ \psi\in \widehat{\mathbb{F}_q^\times}}\frac{g(\overline{\psi})g(\varphi\psi)}{g(\varphi)}\sum_{y\in\mathbb{F}_q}\varphi{(y(y-1))}\psi{\left(u_y\right)},\\
	N_1&:= -\varphi(c^2-4ab) \sum_{ \psi\in \widehat{\mathbb{F}_q^\times}}\frac{g(\overline{\psi})g(\varphi\psi)}{g(\varphi)}\sum_{y\in\mathbb{F}_q}\varphi{(y(y-1))}\psi{\left(-u_y\right)}\delta(\psi).
\end{align*}
Now, we calculate $N_1$. Clearly, $N_1$ is nonzero only if $\psi=\varepsilon$. Thus, using the fact that  $g(\varepsilon)=-1$, we have
\begin{align*}
	N_1&=\varphi(c^2-4ab)  \sum_{y\in\mathbb{F}_q}\varphi{(y(y-1))}\varepsilon{\left(-u_y\right)}\\
	&=\varphi(c^2-4ab)\sum_{y\in\mathbb{F}_q\backslash\{\frac{c^2}{4ab},1\}}\varphi(y(y-1)),
\end{align*}
where we use the fact that $\varepsilon(-u_y)=0$ if $y=\frac{c^2}{4ab}$ or $y=1$, and $\varepsilon(-u_y)=1$ otherwise to obtain the last expression. Now, adding and subtracting the term under the summation for $y=\frac{c^2}{4ab}$ and $y=1$, we have
\begin{align*}
	N_1&=\varphi(c^2-4ab)\sum_{y\in\mathbb{F}_q}\varphi(y(y-1))-\varphi(c^2-4ab)\varphi\left(\frac{c^2}{4ab}\left(\frac{c^2}{4ab}-1\right)\right).
\end{align*}
We have $\sum_{y\in \mathbb{F}_q}\varphi(y(y-1))=\varphi(-1)J(\varphi, \varphi)=-1$. Hence, 
\begin{align*}
	N_1=-\varphi(c^2-4ab)-1.
\end{align*}
Using Lemma \ref{lemma2_2} in the expression of $M_1$, we obtain
\begin{align*}
	M_1 &=\frac{q\varphi(c^2-4ab)}{q-1} \sum_{ \psi\in \widehat{\mathbb{F}_q^\times}}J(\overline{\psi}, \varphi\psi)\sum_{y\in\mathbb{F}_q}\varphi{(y(y-1))}\psi(u_y)\\
	&=\frac{q\varphi(c^2-4ab)}{q-1} \sum_{ \psi\in \widehat{\mathbb{F}_q^\times}}\sum_{z\in\mathbb{F}_q}\overline{\psi}(z){\psi\varphi}(1-z)\sum_{y\in\mathbb{F}_q}\varphi{(y(y-1))}\psi(u_y)\\
	&=\frac{q\varphi(c^2-4ab)}{q-1}\sum_{z\in\mathbb{F}_q^\times}\varphi(1-z) \sum_{y\in\mathbb{F}_q}\varphi{(y(y-1))}\sum_{ \psi\in \widehat{\mathbb{F}_q^\times}}{\psi}\left(\frac{(1-z)u_y}{z}\right).
\end{align*}
	For $y\in\mathbb{F}_q$ such that $u_y=0$ or $u_y=-1$, there does not exist any $z\in\mathbb{F}_q^\times$ such that $\frac{(1-z)u_y}{z}=1$. Therefore, the inner sum is zero by orthogonality relation \eqref{eq-2}. For $y\in\mathbb{F}_q$ such that $u_y\neq0$ and $u_y\neq-1$, the inner sum is nonzero if and only if $z=\frac{u_y}{u_y+1}$. Therefore, \eqref{eq-2} yields 
\begin{align*}
	M_1 &=q\varphi(c^2-4ab)\sum_{}{'}\varphi(y(y-1)(1+u_y)),
\end{align*}
where $\sum_{}{'}$ is over $y\in\mathbb{F}_q$ such that $u_y\neq0$ and $u_y\neq-1$. Adding and subtracting the terms under the summation for $y$ for which $u_y=-1$ and using the fact that $u_y=0$ if and only if $y=1$ or $y=\frac{c^2}{4ab}$, we obtain
\begin{align*}
	M_1 &=q\varphi(c^2-4ab)\sum_{y\in\mathbb{F}_q\backslash\{1,\frac{c^2}{4ab}\}}\varphi(y(y-1)(1+u_y)).
\end{align*}
Adding and subtracting the terms under the summation for $y=1$ and $y=\frac{c^2}{4ab}$, we deduce that
\begin{align*}
	M_1 &=q\varphi(c^2-4ab)\sum_{y\in\mathbb{F}_q}\varphi(y(y-1)(1+u_y))-q.
\end{align*}
Therefore, 
\begin{align}\label{eq--15}
	D_1&=H_1+F_1+M_1+N_1\notag\\ &=q\varphi(c^2-4ab)\left(\sum_{y\in\mathbb{F}_q}\varphi(y(y-1)(1+u_y))+\varphi(abde)\right)+q^2\delta\left(1-\frac{ab}{de}\right)-2q-1.
\end{align}
Next, we evaluate $D_2$. We rewrite $D_2$ as follows
\begin{align*}
	D_2=\frac{\varphi(-ae)}{(q-1)^2}\sum_{\eta, \psi\in\widehat{\mathbb{F}_{q}^{\times}}}\left(\frac{1}{q-1}\sum_{\gamma\in\widehat{\mathbb{F}_{q}^{\times}}}g(\eta^2\psi^2\gamma)g(\varphi\overline{\eta\gamma})g(\varphi\overline{\psi}\gamma)g(\overline{\gamma})\right)g(\overline{\eta})g(\overline{\psi})\\
	\times\eta\left(\frac{ab}{c^2}\right)\psi\left(\frac{de}{c^2}\right).
\end{align*}
Using Theorem \ref{thrm--3} with $A=\eta^2\psi^2$, $B=\varphi\overline{\eta}$, $C=\varphi\overline{\psi}$, and $D=\varepsilon$, we obtain
\begin{align}
	D_2&=\frac{\varphi(-ae)}{(q-1)^2}\sum_{\eta, \psi\in\widehat{\mathbb{F}_{q}^{\times}}}\left[\frac{g(\overline{\psi\eta})g(\overline{\psi}\varphi)g(\eta\psi^2\varphi)g(\eta^2\psi^2)}{g(\eta\psi)}+q(q-1)\psi\varphi(-1)\delta(\eta\psi)\right]\notag\\
	&\hspace*{0.5cm}\times g(\overline{\eta})g(\overline{\psi})\eta\left(\frac{ab}{c^2}\right)\psi\left(\frac{de}{c^2}\right)\label{new eq-2}\\
	&=E_2+F_2\label{new eq-6},
\end{align}
where
\begin{align*}
	&E_2:=\frac{\varphi(-ae)}{(q-1)^2}\sum_{\eta, \psi\in\widehat{\mathbb{F}_{q}^{\times}}}\frac{g(\overline{\psi\eta})g(\overline{\psi}\varphi)g(\eta\psi^2\varphi)g(\eta^2\psi^2)}{g(\eta\psi)}g(\overline{\eta})g(\overline{\psi})\eta\left(\frac{ab}{c^2}\right)\psi\left(\frac{de}{c^2}\right),\\
	&F_2:=\frac{q\varphi(ae)}{q-1}\sum_{\eta, \psi\in\widehat{\mathbb{F}_{q}^{\times}}}
	g(\overline{\eta})g(\overline{\psi})\eta\left(\frac{ab}{c^2}\right)\psi\left(\frac{-de}{c^2}\right)\delta(\eta\psi).
\end{align*}
Note that $F_2=\varphi(ae)H_1$. Therefore, \eqref{eq--10} yields 
\begin{align*}
	F_2=q^2\varphi(ae)\delta\left(1-\frac{ab}{de}\right)-q\varphi(ae).
\end{align*}
We now simplify the expression for $E_2$. Substituting the value of $g(\eta^2\psi^2)$ from \eqref{eq--11} in $E_2$ gives
\begin{align*}
	E_2=\frac{\varphi(-ae)}{(q-1)^2}\sum_{ \psi\in\widehat{\mathbb{F}_{q}^{\times}}}g(\overline{\psi})g(\overline{\psi}\varphi)\psi\left(\frac{4de}{c^2}\right)\sum_{\eta\in\widehat{\mathbb{F}_{q}^{\times}}}g(\eta\psi^2\varphi)g(\overline{\eta})\frac{g(\overline{\psi\eta})g(\psi\eta\varphi)}{g(\varphi)}\eta\left(\frac{4ab}{c^2}\right).
\end{align*}
Multiplying the numerator and denominator with $q^2g(\psi^2\varphi)\psi(-1)$ and using the fact that $\eta(-1)^2=1$, we obtain
\begin{align*}
	E_2&=\frac{q^2\varphi(-ae)}{(q-1)^2}\sum_{ \psi\in\widehat{\mathbb{F}_{q}^{\times}}}g(\overline{\psi})g(\overline{\psi}\varphi)g(\psi^2\varphi)\psi\left(\frac{-4de}{c^2}\right)\sum_{\eta\in\widehat{\mathbb{F}_{q}^{\times}}}\left(\frac{g(\eta\psi^2\varphi)g(\overline{\eta})\eta(-1)}{qg(\psi^2\varphi)}\right)\\
	&\hspace*{0.5cm}\times\left(\frac{g(\overline{\psi\eta})g(\psi\eta\varphi)\eta\psi(-1)}{qg(\varphi)}\right)\eta\left(\frac{4ab}{c^2}\right)\\
	&=\frac{q^2\varphi(-ae)}{(q-1)^2}\sum_{ \psi\in\widehat{\mathbb{F}_{q}^{\times}}}g(\overline{\psi})g(\overline{\psi}\varphi)g(\psi^2\varphi)\psi\left(\frac{-4de}{c^2}\right)\sum_{\eta\in\widehat{\mathbb{F}_{q}^{\times}}}\left[{\psi^2\varphi\eta \choose \eta}-\frac{q-1}{q}\delta(\psi^2\varphi)\right]\\
	&\hspace*{0.5cm}\times{\psi\varphi\eta \choose \psi\eta}\eta\left(\frac{4ab}{c^2}\right),
\end{align*}
where we use Lemma \ref{lemma2_2} and \eqref{eq-0} to obtain the last expression. We now have 
\begin{align}\label{new eq-7}
	E_2=H_2+I_2,
\end{align}
where
\begin{align*}
	&H_2:=\frac{q^2\varphi(-ae)}{(q-1)^2}\sum_{ \psi\in\widehat{\mathbb{F}_{q}^{\times}}}g(\overline{\psi})g(\overline{\psi}\varphi)g(\psi^2\varphi)\psi\left(\frac{-4de}{c^2}\right)\sum_{\eta\in\widehat{\mathbb{F}_{q}^{\times}}}{\psi^2\varphi\eta \choose \eta}{\psi\varphi\eta \choose \psi\eta}\eta\left(\frac{4ab}{c^2}\right),\\
	&I_2:=-\frac{q\varphi(-ae)}{q-1}\sum_{ \psi\in\widehat{\mathbb{F}_{q}^{\times}}}g(\overline{\psi})g(\overline{\psi}\varphi)g(\psi^2\varphi)\psi\left(\frac{-4de}{c^2}\right)\sum_{\eta\in\widehat{\mathbb{F}_{q}^{\times}}}{\psi\varphi\eta \choose \psi\eta}\eta\left(\frac{4ab}{c^2}\right)\delta(\psi^2\varphi).
\end{align*}
We first simplify $I_2$. Using \eqref{delta-1}, $I_2$ is nonzero if and only if $\psi^2=\varphi$. Therefore, if $q\equiv3\pmod 4$, then $I_2=0$ because there does not exist any $\psi\in\widehat{\mathbb{F}_{q}^{\times}}$ such that $\psi^2=\varphi$. Next, we simplify the expression for $I_2$ when $q\equiv1\pmod4$. Clearly, when $q\equiv1\pmod4$, we have $\psi=\chi_4$ or $\psi=\overline{\chi_4}$, where $\chi_4$ is a multiplicative character of order $4$. Using the facts that $\varphi(-1)=1$ and $g(\varepsilon)=-1$, we obtain
\begin{align*}
	I_2&=\frac{q\varphi(ae)}{q-1}g(\overline{\chi_4})g(\chi_4)\chi_4\left(\frac{-4de}{c^2}\right)\sum_{\eta\in\widehat{\mathbb{F}_{q}^{\times}}}{\overline{\chi_4}\eta \choose \chi_4\eta}\eta\left(\frac{4ab}{c^2}\right)\\
	&\hspace*{0.5cm}+\frac{q\varphi(ae)}{q-1}g(\overline{\chi_4})g(\chi_4)\overline{\chi_4}\left(\frac{-4de}{c^2}\right)\sum_{\eta\in\widehat{\mathbb{F}_{q}^{\times}}}{\chi_4\eta \choose \overline{\chi_4}\eta}\eta\left(\frac{4ab}{c^2}\right)\\
	&=\frac{q^2\varphi(ae)}{q-1}\chi_4\left(\frac{4de}{c^2}\right)\sum_{\eta\in\widehat{\mathbb{F}_{q}^{\times}}}\frac{\chi_4\eta(-1)}{q}J(\overline{\chi_4}\eta,\overline{\chi_4\eta})\eta\left(\frac{4ab}{c^2}\right)\\	&\hspace*{0.5cm}+\frac{q^2\varphi(ae)}{q-1}\overline{\chi_4}\left(\frac{4de}{c^2}\right)\sum_{\eta\in\widehat{\mathbb{F}_{q}^{\times}}}\frac{\chi_4\eta(-1)}{q}J(\chi_4\eta,\chi_4\overline{\eta})\eta\left(\frac{4ab}{c^2}\right),
\end{align*}
where we use \eqref{eq-0} and Lemma \ref{lemma2_1} to obtain the last expression. Applying the definition of Jacobi sum from \eqref{eq-0}, we deduce that
\begin{align}\label{eq--13}
	I_2&=\frac{q\varphi(ae)}{q-1}\chi_4\left(\frac{-4de}{c^2}\right)\sum_{\eta\in\widehat{\mathbb{F}_{q}^{\times}}}\sum_{z\in\mathbb{F}_{q}}\overline{\chi_4}\eta(z)\overline{\chi_4\eta}(1-z)\eta\left(-\frac{4ab}{c^2}\right)\nonumber\\	&\hspace*{0.5cm}+\frac{q\varphi(ae)}{q-1}\overline{\chi_4}\left(\frac{-4de}{c^2}\right)\sum_{\eta\in\widehat{\mathbb{F}_{q}^{\times}}}\sum_{z\in\mathbb{F}_{q}}\chi_4\eta(z)\chi_4\overline{\eta}(1-z)\eta\left(-\frac{4ab}{c^2}\right)\nonumber\\
	&=\frac{q\varphi(ae)}{q-1}\chi_4\left(\frac{-4de}{c^2}\right)\sum_{z\in\mathbb{F}_{q}\backslash\{1\}}\overline{\chi_4}(z(1-z))\sum_{\eta\in\widehat{\mathbb{F}_{q}^{\times}}}\eta\left(\frac{-4abz}{c^2(1-z)}\right)\nonumber\\	&\hspace*{0.5cm}+\frac{q\varphi(ae)}{q-1}\overline{\chi_4}\left(\frac{-4de}{c^2}\right)\sum_{z\in\mathbb{F}_{q}\backslash\{1\}}\chi_4(z(1-z))\sum_{\eta\in\widehat{\mathbb{F}_{q}^{\times}}}\eta\left(\frac{-4abz}{(1-z)c^2}\right).
\end{align}
Using \eqref{eq-2}, $I_2$ is nonzero if and only if $-\frac{4abz}{(1-z)c^2}=1$, i.e., $z=\frac{c^2}{c^2-4ab}$. Putting $z=\frac{c^2}{c^2-4ab}$ in \eqref{eq--13} and then using the facts that $\chi_4(y^2)=\varphi(y)$, $\overline{\chi_4}(y^2)=\varphi(y)$, $\chi_4(y^4)=1$, and $\overline{\chi_4}(y^4)=1$ for $y\in\mathbb{F}_q^\times$, we obtain
\begin{align*}
	I_2=q\varphi(ae(c^2-4ab))\left[\chi_4\left(\frac{de}{ab}\right)+\overline{\chi_4}\left(\frac{de}{ab}\right)\right].
\end{align*}
Next, we simplify the expression for $H_2$. Theorem \ref{greene-def} yields
\begin{align}\label{new eq-8}
	H_2=\frac{q\varphi(-ae)}{q-1}\sum_{ \psi\in\widehat{\mathbb{F}_{q}^{\times}}}g(\overline{\psi})g(\overline{\psi}\varphi)g(\psi^2\varphi)\psi\left(\frac{-4de}{c^2}\right){_2}F_1\left(\begin{array}{cc}
		\psi^2\varphi & \psi\varphi \\
		& \psi
	\end{array}|\frac{4ab}{c^2}
	\right).
\end{align}
Now, using Theorem \ref{thrm--4} and the fact that $c^2-4ab\neq0$, we obtain
\begin{align*}
	H_2=\frac{q\varphi(-ae)}{q-1}\sum_{ \psi\in\widehat{\mathbb{F}_{q}^{\times}}}g(\overline{\psi})g(\overline{\psi}\varphi)g(\psi^2\varphi)\psi\left(\frac{4dec^2}{(c^2-4ab)^2}\right){_2}F_1\left(\begin{array}{cc}
		\overline{\psi}\varphi & \varphi \\
		& \psi
	\end{array}|\frac{4ab}{c^2}
	\right).
\end{align*}
Employing Definition \ref{char-def}, we deduce that
\begin{align*}
	H_2&=\frac{q\varphi(-ae)}{q-1}\sum_{ \psi\in\widehat{\mathbb{F}_{q}^{\times}}}g(\overline{\psi})g(\overline{\psi}\varphi)g(\psi^2\varphi)\psi\left(\frac{4dec^2}{(c^2-4ab)^2}\right)\\
	&\hspace*{0.5cm}\times\frac{\psi\varphi(-1)}{q}\sum_{y\in\mathbb{F}_q}\varphi(y)\varphi\psi(1-y)\psi\varphi\left(1-\frac{4aby}{c^2}\right)\\
	&=\frac{\varphi(ae)}{q-1}\sum_{y\in\mathbb{F}_q}\varphi(y(1-y)(c^2-4aby))\sum_{ \psi\in\widehat{\mathbb{F}_{q}^{\times}}}g(\overline{\psi})g(\overline{\psi}\varphi)g(\psi^2\varphi)\psi\left(\frac{-u_y}{4}\right)\\
	&=\frac{\varphi(ae)}{q-1}\sum_{y\in\mathbb{F}_q\backslash\{1,\frac{c^2}{4ab}\}}\varphi(y(1-y)(c^2-4aby))\sum_{ \psi\in\widehat{\mathbb{F}_{q}^{\times}}}g(\overline{\psi})g(\overline{\psi}\varphi)g(\psi^2\varphi)\psi\left(\frac{-u_y}{4}\right).
\end{align*}
Proposition \ref{prop-1} yields
\begin{align*}
	H_2=q\varphi(ad)\sum_{y\in\mathbb{F}_q\backslash\{1,\frac{c^2}{4ab}\}}\varphi(y)\cdot{_2}G_2\left[\begin{array}{cc}
		\frac{1}{4}, & \frac{3}{4} \vspace*{0.05cm}\\
		0, & \frac{1}{2}
	\end{array}|-\frac{1}{u_y}
	\right]_q.
\end{align*}
Substituting the final expressions for $F_2$, $I_2$, and $H_2$, we have 
\begin{align}\label{eq--16}
	D_2&=H_2+I_2+F_2\nonumber\\
	&=q\varphi(ad)\sum_{y\in\mathbb{F}_q\backslash\{1,\frac{c^2}{4ab}\}}\varphi(y)\cdot{_2}G_2\left[\begin{array}{cc}
		\frac{1}{4}, & \frac{3}{4}\vspace*{0.05cm} \\
		0, & \frac{1}{2}
	\end{array}|-\frac{1}{u_y}
	\right]_q+q^2\varphi(ae)\delta\left(1-\frac{ab}{de}\right)\nonumber\\
	&\hspace*{1cm}-q\varphi(ae)+I_2,
\end{align}
where
\begin{align*}
	I_2=\left\{
	\begin{array}{ll}
		q\varphi(ae(c^2-4ab))\left[\chi_4\left(\frac{de}{ab}\right)+\overline{\chi_4}\left(\frac{de}{ab}\right)\right] , & \hbox{if $q\equiv1\pmod4$ ;} \\
		0, & \hbox{if $q\equiv3\pmod4$.}
	\end{array}
	\right.
\end{align*}
Adding the expressions for $D_1$ and $D_2$ using \eqref{eq--15} and \eqref{eq--16}, we obtain $D$ as follows
\begin{align}\label{eq--17}
		D &=q\varphi(c^2-4ab)\left(\sum_{y\in\mathbb{F}_q}\varphi(y(y-1)(1+u_y))+\varphi(abde)\right)-1+I_2-q(2+\varphi(ae))\nonumber\\
		&\hspace*{0.5cm}+q\varphi(ad)\sum_{y\in\mathbb{F}_q\backslash\{1,\frac{c^2}{4ab}\}}\varphi(y)\cdot{_2}G_2\left[\begin{array}{cc}
			\frac{1}{4}, & \frac{3}{4} \vspace*{0.05cm}\\
			0, & \frac{1}{2}
		\end{array}|-\frac{1}{u_y}
		\right]_q+q^2\delta\left(1-\frac{ab}{de}\right)(1+\varphi(ae)).
	\end{align}
Substituting \eqref{eq--7}, \eqref{eq--8}, \eqref{eq--9}, and \eqref{eq--17} in \eqref{eq-13} completes the proof of the theorem.
\end{proof} 
\begin{proof}[Proof of Corollary \ref{special-case}]
	Since $c^2=-4ab$, so $c^2-4ab\neq 0$. Now, using $c^2=-4ab$ and $ab=de$, we have
	\begin{align}
	&\varphi(bd)=\varphi\left(\frac{-c^2d}{4a}\right)=\varphi(-ad)\label{ex-eq-1},\\
	&\varphi(ae)=\varphi\left(\frac{a^2b}{d}\right)=\varphi(bd)=\varphi\left(\frac{-c^2d}{4a}\right)=\varphi(-ad),\label{ex-eq-2}\\
	&\varphi(c^2-4ab)=\varphi(2c^2)=\varphi(2),\label{ex-eq-3}\\
	&u_y=\frac{16de(1-y)(c^2-4aby)}{(c^2-4ab)^2}=\frac{16abc^2(1-y)(1+y)}{4c^4}=y^2-1\label{ex-eq-4}.
	\end{align}
	Using \eqref{ex-eq-1}, \eqref{ex-eq-2}, \eqref{ex-eq-3}, \eqref{ex-eq-4}, and Theorem \ref{MT-1} with the condition that $ab=de$, we obtain 
	\begin{align}
	\#C_{a,b,c,d,e,f}(\mathbb{F}_q)&=2q+\varphi(ad)+\varphi(2)-2+I_2^\prime +\varphi(2)\sum_{y\in\mathbb{F}_q}\varphi(y(y-1))(1+u_y))\nonumber\\
	&\hspace*{.2cm}+q\varphi(-ad)+X\label{ex-eq-5},
	\end{align}
	where 
	\begin{align*}
	&X=\varphi(ad)\sum_{y\in\mathbb{F}_q^\times\backslash\{1,-1\} }\varphi(y) \mathbb{G}\left(-\frac{1}{u_y}
	\right),\\
	&I_2^\prime=\left\{\begin{array}{ll}
	2\varphi(2ad), & \hbox{if $q\equiv1\pmod4$;}\\
	0,& \hbox{if $q\equiv3\pmod4$.}
	\end{array}
	\right.
	\end{align*}
	Let $t:=-\frac{1}{u_y}$. It is easy to see that $t=1$ if and only if $y=0$. For $t\neq1$, we have $\frac{t-1}{t}=1+u_y=y^2$. Putting the values of $\mathbb{G}(x)$ from Theorem \ref{sv}, we obtain
	\begin{align*}
	X&=\varphi(2ad)\sum_{y\in\mathbb{F}_q^\times\backslash\{1,-1\} }\varphi(y)(\varphi(1-y)+\varphi(1+y))\\
	&=\varphi(2ad)\sum_{y\in\mathbb{F}_q }\varphi(y)(\varphi(1-y)+\varphi(1+y))-\varphi(ad)-\varphi(-ad)\\
	&=\varphi(-2ad)\sum_{y\in\mathbb{F}_q }\varphi(y(y-1))+\varphi(2ad)\sum_{y\in\mathbb{F}_q }\varphi(y(1+y))-\varphi(ad)-\varphi(-ad)\\
	&=\varphi(-2ad)\sum_{y\in\mathbb{F}_q }\varphi(y(y-1))+\varphi(2ad)\sum_{y\in\mathbb{F}_q }\varphi(-y(1-y))-\varphi(ad)-\varphi(-ad),
	\end{align*}
	where we use the change of variable $y\rightarrow-y$ in the second summation of the last expression. Now, using the fact that $\sum_{y\in\mathbb{F}_q }\varphi(y(y-1))=-1$, we obtain
	\begin{align*}
	X=-\varphi(-2ad)-\varphi(2ad)-\varphi(ad)-\varphi(-ad).
	\end{align*}
	Substituting the values of $X$ and $u_y$ in \eqref{ex-eq-5}, we have
	\begin{align*}
	\#C_{a,b,c,d,e,f}(\mathbb{F}_q)&=2q+\varphi(2)-2+I_2^\prime +\varphi(2)\sum_{y\in\mathbb{F}_q}\varphi(y(y-1))
	+q\varphi(-ad)\\
	&-\varphi(-2ad)-\varphi(2ad)-\varphi(-ad)\\
	&=2q-2+I_2^\prime+q\varphi(-ad)-\varphi(-2ad)-\varphi(2ad)-\varphi(-ad).
	\end{align*}
	Now using the fact that $\varphi(-1)=1$ if $q\equiv1\pmod4$ and $
	\varphi(-1)=-1$ if $q\equiv3\pmod4$, and substituting the value of $I_2^\prime$ we obtain the desired result.
\end{proof}
\begin{proof}[Proof of Corollary \ref{cor-1}]
	Since $\varphi(\ell)=-1$ and $\varphi((\alpha^2+4\beta)\slash \ell)=-1$, we have $\varphi(\alpha^2+4\beta)=1$.
	We take $a=1, b=-\beta, c=\alpha, d=k^2, e=k^2\ell,$ and $ f=k^2\ell\alpha $ in Theorem \ref{MT-1}.  Now, using the facts that $\varphi(\ell)=-1$ and $\varphi(\alpha^2+4\beta)=1$, we obtain the desired result.
\end{proof}
\section{Proof of Theorem \ref{MT-6}}
In Theorem \ref{MT-1}, we considered the curve $C_{a,b,c,d,e,f}$ when $c^2-4ab\neq 0$. In this section, we prove Theorem \ref{MT-6} which gives a formula for the number of $\mathbb{F}_q$-points on $C_{a,b,c,d,e,f}$ when $c^2-4ab= 0$.
\begin{proof}[Proof of Theorem \ref{MT-6}]
Following similar steps as shown in the proof of Theorem \ref{MT-1} to deduce \eqref{eq-13}, we have 
	\begin{align}\label{eq--32}
		q\cdot\#C_{a,b,c,d,e,f}=q^2+A+B+C+D_1+D_2,
	\end{align}
where $A, B, C, D_1$, and $D_2$ are as defined in \eqref{eq--7}, \eqref{eq--8}, \eqref{eq--9}, \eqref{new eq-1}, and \eqref{new eq-2}, respectively.
Using \eqref{new eq-4} and \eqref{new eq-3}, we obtain 
$$D_1=E_1+F_1+H_1,$$
where
\begin{align*}
	E_1&=\frac{q^2}{(q-1)^2} \sum_{\eta, \psi\in \widehat{\mathbb{F}_q^\times}}{{\psi}^2\eta \choose \eta}{{\psi\varphi\eta}\choose {\psi\eta}} g({\psi}^2)g(\overline{\psi})^2\eta\left(\frac{4ab}{c^2}\right)\psi\left(\frac{-4de}{c^2}\right),\\
	F_1&=\frac{-q}{q-1} \sum_{\eta, \psi\in \widehat{\mathbb{F}_q^\times}}{{\psi\varphi\eta}\choose {\psi\eta}} g({\psi}^2)g(\overline{\psi})^2\eta\left(\frac{4ab}{c^2}\right)\psi\left(\frac{-4de}{c^2}\right)\delta({\psi}^2),\\
	H_1&= \frac{q}{q-1} \sum_{\eta, \psi\in \widehat{\mathbb{F}_q^\times}} g(\overline{\eta})g(\overline{\psi})\eta\left(\frac{ab}{c^2}\right)\psi\left(\frac{de}{c^2}\right)\psi(-1)\delta(\eta\psi).
\end{align*}
We observe that $H_1=q^2\delta\left(1-\frac{ab}{de}\right)-q$, as calculated in \eqref{eq--10}. Following the similar steps as shown to deduce \eqref{eq--12} and using the condition $c^2=4ab$, we have
\begin{align*}
	F_1=\frac{q\varphi(-de)}{q-1} \sum_{z\in\mathbb{F}_q\backslash\{1\}}\varphi(1-z)\sum_{\eta\in \widehat{\mathbb{F}_q^\times}} \eta\left(\frac{z}{z-1}\right)+\frac{1}{q-1}\sum_{z\in\mathbb{F}_q\backslash\{1\}}\varphi(z) \sum_{\eta\in \widehat{\mathbb{F}_q^\times}} \eta\left(\frac{z}{z-1}\right).
\end{align*} 
Clearly, there does not exist any $z\in\mathbb{F}_q\backslash\{1\}$ such that $\frac{z}{z-1}=1$. Therefore, using \eqref{eq-2} we obtain the inner sums for both the terms are zero and hence $F_1=0$.
Using \eqref{new eq-5} the simplified expression for $E_1$ can be written as follows 
\begin{align*}
		E_1=\frac{q}{q-1} \sum_{ \psi\in \widehat{\mathbb{F}_q^\times}} g({\psi}^2)g(\overline{\psi})^2{_{2}}F_1\left(\begin{array}{cc}
		{\psi}^2 & {\varphi\psi} \\
		& \psi
	\end{array}|\frac{4ab}{c^2}
	\right) \psi\left(\frac{-4de}{c^2}\right).
\end{align*}
 Using the transformation identity given in Theorem \ref{thrm--4}, we obtain
\begin{align*}
	E_1&=\frac{q}{q-1}\sum_{\psi\in\widehat{\mathbb{F}_q^\times}}g(\psi^2)g(\overline{\psi})^2{\varphi\psi\choose\overline{\psi}}\psi\left(\frac{-4de}{c^2}\right)\\
	&=\frac{1}{q-1}\sum_{\psi\in\widehat{\mathbb{F}_q^\times}}g(\psi^2)g(\overline{\psi})^2J(\varphi\psi,\psi)\psi\left(\frac{4de}{c^2}\right)\\
	&=\frac{\varphi(-1)}{q-1}\sum_{\psi\in\widehat{\mathbb{F}_q^\times}}g(\psi^2)g(\overline{\psi})^2J(\varphi\psi,\varphi\overline{\psi}^2)\psi\left(\frac{-4de}{c^2}\right)\\
	&=\frac{\varphi(-1)}{q-1}\sum_{\psi\in\widehat{\mathbb{F}_q^\times},\psi\neq\varepsilon} g(\psi^2)g(\overline{\psi})^2J(\varphi\psi,\varphi\overline{\psi}^2)\psi\left(\frac{-4de}{c^2}\right) - \frac{\varphi(-1)}{q-1}J(\varphi,\varphi),
	\end{align*}
where we use the fact that $J(A,B)=A(-1)J(A,\overline{AB})$ for any multiplicative characters $A$ and $B$ to obtain the second last expression. Employing Lemma \ref{lemma2_2} and the fact that $J(\varphi,\varphi)=-\varphi(-1)$, we deduce that
	\begin{align*}
	E_1=\frac{\varphi(-1)}{q-1}\sum_{\psi\in\widehat{\mathbb{F}_q^\times},\psi\neq\varepsilon}g(\psi^2)g(\overline{\psi})^2\frac{g(\varphi\psi)g(\varphi\overline{\psi}^2)}{g(\overline{\psi})}\psi\left(\frac{-4de}{c^2}\right)+ \frac{1}{q-1}.
\end{align*}
Using Davenport-Hasse relation with $m=2$, $A=\psi$ and $m=2$, $A=\overline{\psi}^2$ yield
\begin{align*}
	E_1&=\frac{\varphi(-1)}{q-1}\sum_{\psi\in\widehat{\mathbb{F}_q^\times},\psi\neq\varepsilon}\frac{g(\psi)g(\varphi\psi)^2g(\overline{\psi})g(\overline{\psi}^4)}{g(\overline{\psi}^2)}\psi\left(\frac{-16^2de}{c^2}\right)+ \frac{1}{q-1}\\
	&=\frac{q\varphi(-1)}{q-1}\sum_{\psi\in\widehat{\mathbb{F}_q^\times},\psi\neq\varepsilon}\frac{g(\varphi\psi)^2g(\overline{\psi}^{4})}{g(\overline{\psi}^2)}\psi\left(\frac{16^2de}{c^2}\right)+ \frac{1}{q-1},
\end{align*}
where we use Lemma \ref{lemma2_1} to obtain the last equality. Replacing $\psi$ with $\omega^j$ , we have
\begin{align*}
	E_1= \frac{q\varphi(-1)}{q-1}\sum_{j=1}^{q-2}\frac{g(\overline{\omega}^{-j+\frac{q-1}{2}})^2g(\overline{\omega}^{4j})}{g(\overline{\omega}^{2j})}\overline\omega^j\left(\frac{c^2}{16^2de}\right)+\frac{1}{q-1}.
\end{align*}
To calculate $E_1$, we first simplify a character sum $Z(\lambda)$ for $\lambda\in\mathbb{F}_q^\times$ defined as follows
\begin{align}\label{eq--33}
	Z(\lambda):=\frac{q\varphi(-1)}{q-1}\sum_{j=1}^{q-2}\frac{g(\overline{\omega}^{-j+\frac{q-1}{2}})^2g(\overline{\omega}^{4j})}{g(\overline{\omega}^{2j})}\overline\omega^j\left(\lambda\right)+\frac{1}{q-1}.
\end{align}
Using Gross-Kobilitz formula, we obtain
\begin{align*}
Z(\lambda)&=\frac{q\varphi(-1)}{q-1}\sum_{j=1}^{q-2}(-p)^{\zeta_j}L_j\overline\omega^j\left(\lambda\right)+\frac{1}{q-1},
\end{align*}
where 
 \begin{align*}
	\zeta_j&:=\sum_{i=0}^{r-1}\left(2\left\langle\frac{p^i}{2}-\frac{jp^i}{q-1}\right\rangle+  \left\langle\frac{4jp^i}{q-1}\right\rangle-\left\langle\frac{2jp^i}{q-1}\right\rangle\right),\\
	L_j&:=\prod_{i=0}^{r-1}\frac{{\Gamma_p\left(\left\langle\frac{p^i}{2}-\frac{jp^i}{q-1}\right\rangle\right)}^2\Gamma_p\left(\left\langle\frac{4jp^i}{q-1}\right\rangle\right)}{\Gamma_p\left(\left\langle\frac{2jp^i}{q-1}\right\rangle\right)}.
\end{align*}
Using the fact that  $x=\left\lfloor x \right\rfloor+\left\langle x \right\rangle$ for all $x\in \mathbb{R}$, we obtain
\begin{align*}
	\zeta_j&=\sum_{i=0}^{r-1}\left(1-2\left\lfloor\frac{1}{2}-\frac{jp^i}{q-1}\right\rfloor-  \left\lfloor\frac{4jp^i}{q-1}\right\rfloor+\left\lfloor\frac{2jp^i}{q-1}\right\rfloor\right).
\end{align*}
For $1\leq j\leq q-2$, using Lemma \ref{lemma-3_4} with $l=4$ and $l=2$ yield 
\begin{align}\label{new eq--9}
	\zeta_j=r+\zeta_j^\prime,
\end{align}
where $\zeta_j^\prime:=-\sum_{i=0}^{r-1}\left(2\left\lfloor\left\langle\frac{p^i}{2}\right\rangle-\frac{jp^i}{q-1}\right\rfloor+ \left\lfloor\left\langle\frac{-p^i}{4}\right\rangle+\frac{jp^i}{q-1}\right\rfloor+\left\lfloor\left\langle\frac{-3p^i}{4}\right\rangle+\frac{2jp^i}{q-1}\right\rfloor\right)$. Using Lemma \ref{lemma3_2} with $t=4$ and $t=2$, and multiplying numerator and denominator by $\Gamma_p\left(\left\langle\frac{p^i}{2}\right\rangle\right)^2$, we deduce 
\begin{align}\label{new eq--10}
	L_j&=\overline{\omega}^j(4^3){\prod_{i=0}^{r-1}\frac{{\Gamma_p\left(\left\langle\frac{p^i}{2}\right\rangle\right)}^2{\Gamma_p\left(\left\langle\frac{p^i}{2}-\frac{jp^i}{q-1}\right\rangle\right)}^2\Gamma_p\left(\left\langle\frac{-p^i}{4}+\frac{jp^i}{q-1}\right\rangle\right)\Gamma_p\left(\left\langle\frac{-3p^i}{4}+\frac{jp^i}{q-1}\right\rangle\right)}{{\Gamma_p\left(\left\langle\frac{p^i}{2}\right\rangle\right)}^2\Gamma_p\left(\left\langle\frac{-p^i}{4}\right\rangle\right)\Gamma_p\left(\left\langle\frac{-3p^i}{4}\right\rangle\right)}}\notag\\
	&=\overline{\omega}^j(4^3)L_j^\prime{\prod_{i=0}^{r-1}\Gamma_p\left(\left\langle\frac{p^i}{2}\right\rangle\right)}^2,
\end{align}
where $L_j^\prime:={\prod_{i=0}^{r-1}\frac{{\Gamma_p\left(\left\langle\frac{p^i}{2}-\frac{jp^i}{q-1}\right\rangle\right)}^2\Gamma_p\left(\left\langle\frac{-p^i}{4}+\frac{jp^i}{q-1}\right\rangle\right)\Gamma_p\left(\left\langle\frac{-3p^i}{4}+\frac{jp^i}{q-1}\right\rangle\right)}{{\Gamma_p\left(\left\langle\frac{p^i}{2}\right\rangle\right)}^2\Gamma_p\left(\left\langle\frac{-p^i}{4}\right\rangle\right)\Gamma_p\left(\left\langle\frac{-3p^i}{4}\right\rangle\right)}}$. Substituting the expressions for $\zeta_j$ and $L_j$ from \eqref{new eq--9} and \eqref{new eq--10}, respectively, in \eqref{eq--33} and then  using \eqref{new eq--1}, we obtain 
\begin{align}\label{new eq--8}
	Z(\lambda)&=\frac{q^2}{q-1}\sum_{j=1}^{q-2}(-p)^{\zeta_j^\prime}L_j^\prime\overline{\omega}^j{\left(4^3\lambda\right)}+\frac{1}{q-1}\notag\\
	&=-q^2\cdot {{_2}G_2}\left[\begin{array}{cc}
		\frac{1}{2}, & \frac{1}{2}\vspace*{0.05cm} \\
		\frac{1}{4}, & \frac{3}{4}
	\end{array}|4^3\lambda
	\right]_q   -q-1.
\end{align}
Clearly, $E_1=Z\left({\frac{c^2}{16^2de}}\right)$. Therefore, the simplified expression for $D_1$ is as follows
\begin{align}\label{eq--30}
	D_1=-q^2\cdot {{_2}G_2}\left[\begin{array}{cc}
		\frac{1}{2}, & \frac{1}{2} \vspace*{0.05cm}\\
		\frac{1}{4}, & \frac{3}{4}
	\end{array}|\frac{c^2}{4de}
	\right]_q-q-1+q^2\delta\left(1-\frac{ab}{de}\right)-q.
\end{align}
Next, we simplify the expression for $D_2$. Using \eqref{new eq-6} and \eqref{new eq-7}, we have
\begin{align*}
	D_2=H_2+I_2+F_2,
\end{align*}
where 
\begin{align*}
	&F_2=\frac{q\varphi(ae)}{q-1}\sum_{\eta, \psi\in\widehat{\mathbb{F}_{q}^{\times}}}
	g(\overline{\eta})g(\overline{\psi})\eta\left(\frac{ab}{c^2}\right)\psi\left(\frac{-de}{c^2}\right)\delta(\eta\psi),\\
	&H_2=\frac{q^2\varphi(-ae)}{(q-1)^2}\sum_{ \psi\in\widehat{\mathbb{F}_{q}^{\times}}}g(\overline{\psi})g(\overline{\psi}\varphi)g(\psi^2\varphi)\psi\left(\frac{-4de}{c^2}\right)\sum_{\eta\in\widehat{\mathbb{F}_{q}^{\times}}}{\psi^2\varphi\eta \choose \eta}{\psi\varphi\eta \choose \psi\eta}\eta\left(\frac{4ab}{c^2}\right),\\
	&I_2=-\frac{q\varphi(-ae)}{q-1}\sum_{ \psi\in\widehat{\mathbb{F}_{q}^{\times}}}g(\overline{\psi})g(\overline{\psi}\varphi)g(\psi^2\varphi)\psi\left(\frac{-4de}{c^2}\right)\sum_{\eta\in\widehat{\mathbb{F}_{q}^{\times}}}{\psi\varphi\eta \choose \psi\eta}\eta\left(\frac{4ab}{c^2}\right)\delta(\psi^2\varphi).
\end{align*}
Clearly, $F_2=\varphi(ae)H_1$. Hence, $F_2=\varphi(ae)\left(q^2\delta\left(1-\frac{ab}{de}\right)-q\right)$. Following similar steps as shown to deduce \eqref{eq--13} and using the condition $c^2=4ab$, we have $I_2=0$ for $q\equiv3\pmod4$ and
\begin{align*}
	I_2=\frac{q\varphi(ae)}{q-1}\chi_4\left(\frac{-4de}{c^2}\right)\sum_{z\in\mathbb{F}_{q}\backslash\{1\}}\overline{\chi_4}(z(1-z))\sum_{\eta\in\widehat{\mathbb{F}_{q}^{\times}}}\eta\left(\frac{z}{z-1}\right)\\
	+\frac{q\varphi(ae)}{q-1}\overline{\chi_4}\left(\frac{-4de}{c^2}\right)\sum_{z\in\mathbb{F}_{q}\backslash\{1\}}\chi_4(z(1-z))\sum_{\eta\in\widehat{\mathbb{F}_{q}^{\times}}}\eta\left(\frac{z}{z-1}\right)
\end{align*}
for $q\equiv1\pmod4$. Clearly, there does not exist any $z\in\mathbb{F}_q\backslash\{1\}$ such that $\frac{z}{z-1}=1$. Therefore, using \eqref{eq-2} we obtain the inner sums for both the terms are zero and hence $I_2=0$.
Using \eqref{new eq-8}, the simplified expression for $H_2$ can be written as follows
\begin{align*}
		H_2=\frac{q\varphi(-ae)}{q-1}\sum_{ \psi\in\widehat{\mathbb{F}_{q}^{\times}}}g(\overline{\psi})g(\overline{\psi}\varphi)g(\psi^2\varphi)\psi\left(\frac{-4de}{c^2}\right){_2}F_1\left(\begin{array}{cc}
		\psi^2\varphi & \psi\varphi \\
		& \psi
	\end{array}|\frac{4ab}{c^2}
	\right).
\end{align*}
Using Theorem \ref{thrm--4} and the condition that $c^2=4ab$, we have
\begin{align}\label{new eq--5}
	H_2&=\frac{q\varphi(-ae)}{q-1}\sum_{\psi\in\widehat{\mathbb{F}_q^\times}}g(\overline{\psi})g(\overline{\psi}\varphi)g({\psi}^2\varphi)\psi^2\varphi(-1)\psi\left(\frac{-4de}{c^2}\right){\psi\varphi\choose\overline{\psi}\varphi}\notag \\
	&=\frac{\varphi(-ae)}{q-1}\sum_{\psi\in\widehat{\mathbb{F}_q^\times}}g(\overline{\psi})g(\overline{\psi}\varphi)g({\psi}^2\varphi)\psi\left(\frac{4de}{c^2}\right)J({\psi\varphi,{\psi}\varphi})\notag\\
	&=\frac{\varphi(-ae)}{q-1}\sum_{\psi\in\widehat{\mathbb{F}_q^\times}, \psi\neq\varphi,\varepsilon}g(\overline{\psi})g(\overline{\psi}\varphi)g({\psi}^2\varphi)\psi\left(\frac{4de}{c^2}\right)J({\psi\varphi,{\psi}\varphi})\notag\\&\hspace{4cm}-\frac{q\varphi(ae)}{q-1}J(\varphi,\varphi)
	-\frac{q\varphi(ad)}{q-1}J(\varepsilon,\varepsilon)\notag\\
	&=\frac{\varphi(-ae)}{q-1}\sum_{\psi\in\widehat{\mathbb{F}_q^\times}, \psi\neq\varphi,\varepsilon}g(\overline{\psi})g(\overline{\psi}\varphi)g({\psi}^2\varphi)\psi\left(\frac{4de}{c^2}\right)\frac{g(\varphi\psi)g(\varphi\psi)}{g(\psi^2)}\notag\\&\hspace{4cm}-\frac{q\varphi(ae)}{q-1}J(\varphi,\varphi)
	-\frac{q\varphi(ad)}{q-1}J(\varepsilon,\varepsilon).
\end{align}
Using Davenport-Hasse relation with $m=2$, $A=\psi^2$ and $m=2$, $A=\psi$, we have 
\begin{align}\label{new eq--6}
	g(\psi^2)=\frac{g(\psi)g(\varphi\psi)\psi(4)}{g(\varphi)},\ \ \ \ \
	g(\varphi\psi^2)=\frac{g(\psi^4)g(\varphi)\psi^2(4^{-1})}{g(\psi^2)}.
\end{align}
Substituting the values of $g(\psi^2)$ and $g(\varphi\psi^2)$ from \eqref{new eq--6} in \eqref{new eq--5} and multiplying the numerator and denominator by $g(\overline{\psi}),$ we obtain
\begin{align*}
	H_2&=\frac{\varphi(-ae)}{q-1}\sum_{\psi\in\widehat{\mathbb{F}_q^\times}, \psi\neq\varphi,\varepsilon}\frac{g(\overline{\psi})^2g(\overline{\psi}\varphi)g({\psi}\varphi)g(\psi^4)g(\varphi)^2}{g(\psi)g(\overline{\psi})g(\psi^2)}\psi\left(\frac{de}{16c^2}\right)\notag\\
	&\hspace{3cm}-\frac{q\varphi(ae)}{q-1}J(\varphi,\varphi)
	-\frac{q\varphi(ad)}{q-1}J(\varepsilon,\varepsilon).
\end{align*}
Using Lemma \ref{lemma2_1} and the facts that $J(\varphi,\varphi)=-\varphi(-1)$ and $J(\varepsilon,\varepsilon)=q-2$, we have
\begin{align*}
	H_2&=\frac{q\varphi(-ae)}{q-1}\sum_{\psi\in\widehat{\mathbb{F}_q^\times}, \psi\neq\varphi,\varepsilon}\frac{g(\overline{\psi})^2g(\psi^4)}{g(\psi^2)}\psi\left(\frac{de}{16c^2}\right)+\frac{q\varphi(-ae)}{q-1}
	-\frac{q(q-2)\varphi(ad)}{q-1}\\
	&=\frac{q\varphi(-ae)}{q-1}\sum_{\psi\in\widehat{\mathbb{F}_q^\times}}\frac{g(\overline{\psi})^2g(\psi^4)}{g(\psi^2)}\psi\left(\frac{de}{16c^2}\right)-2q\varphi(ad),
\end{align*}
 where we add and subtract the term under the summation for $\psi=\varepsilon$ and $\psi=\varphi$ to obtain the last equality. The change of variable $\psi\rightarrow\psi\varphi$ yields 
\begin{align*}
	H_2=\frac{q\varphi(-ad)}{q-1}\sum_{\psi\in\widehat{\mathbb{F}_q^\times}}\frac{g(\overline{\psi}\varphi)^2g(\psi^4)}{g(\psi^2)}\psi\left(\frac{de}{16c^2}\right)-2q\varphi(ad).
\end{align*}
Replacing $\psi$ with $\overline{\omega}^j$ and taking out the term for $j=0$, we have
\begin{align}\label{new-eqn-101}
	H_2&=\frac{q\varphi(-ad)}{q-1}\sum_{j=1}^{q-2}\frac{g(\overline{\omega}^{-j+\frac{q-1}{2}})^2g(\overline{\omega}^{4j})}{g(\overline{\omega}^{2j})}\overline{\omega}^j{\left(\frac{de}{16c^2}\right)}-2q\varphi(ad)+\frac{q^2\varphi(ad)}{q-1}. 
\end{align}
From \eqref{eq--33} and \eqref{new-eqn-101}, we have 
\begin{align*}
	H_2&=\varphi(ad)\left(Z\left(\frac{de}{16c^2}\right)-\frac{1}{q-1}\right)-2q\varphi(ad)+\frac{q^2\varphi(ad)}{q-1}\\
	&=\varphi(ad)\left(Z\left(\frac{de}{16c^2}\right)-\frac{1}{q-1}-2q+\frac{q^2}{q-1}\right)\\
	&=\varphi(ad)\left(Z\left(\frac{de}{16c^2}\right)-q+1\right).
\end{align*} 
From \eqref{new eq--8}, we obtain 
\begin{align*}
	H_2=-q^2\varphi(ad)\cdot{{_2}G_2}\left[\begin{array}{cc}
		\frac{1}{2}, & \frac{1}{2} \vspace*{0.05cm}\\
		\frac{1}{4}, & \frac{3}{4}
	\end{array}|\frac{4de}{c^2}
	\right]_q-2q\varphi(ad).
\end{align*} 
Hence,
\begin{align}\label{eq--31}
	D_2=-q^2\varphi(ad)\cdot{{_2}G_2}\left[\begin{array}{cc}
		\frac{1}{2}, & \frac{1}{2} \vspace*{0.05cm}\\
		\frac{1}{4}, & \frac{3}{4}
	\end{array}|\frac{4de}{c^2}
	\right]_q+\varphi(ae)\left(q^2\delta\left(1-\frac{ab}{de}\right)-q\right)-2q\varphi(ad).
\end{align}
Substituting the expressions from  \eqref{eq--7}, \eqref{eq--8}, \eqref{eq--9}, \eqref{eq--30}, \eqref{eq--31} in \eqref{eq--32} and using the fact that $\varphi(bd)=\varphi(ad)$, we complete the proof of the theorem.
\end{proof}
\begin{proof}[Proof of Corollary \ref{new_cor-1}]
Using conditions $ab=de$ and $c^2=4ab$ in Theorem \ref{MT-6}, we obtain
\begin{align*}
	\#C_{a,b,c,d,e,f}(\mathbb{F}_q)&=q-1-q\varphi(ad)\cdot {_2}G_2\left[\begin{array}{cc}
		\frac{1}{2}, & \frac{1}{2}\vspace*{0.05cm}\\
		\frac{1}{4}, & \frac{3}{4}
	\end{array}|1
	\right]_q-q\cdot {_2}G_2\left[\begin{array}{cc}
		\frac{1}{2}, & \frac{1}{2} \vspace*{0.05cm}\\
		\frac{1}{4}, & \frac{3}{4}
	\end{array}|1
	\right]_q\\
	&\hspace{1cm} + \left(q-1\right)(1+\varphi(ae)).
\end{align*} 
Using \cite[Lemma 5.2 (1)]{SB2} and \cite[Lemma 5.2 (2)]{SB2} we have
\begin{align*}
	\#C_{a,b,c,d,e,f}(\mathbb{F}_q)&=2q-2-\varphi(-ad)\cdot {_2}G_2\left[\begin{array}{cc}
		\frac{1}{4}, & \frac{3}{4}\vspace*{0.05cm}\\
		0, & 0
	\end{array}|1
	\right]_q-\varphi(-1)\cdot {_2}G_2\left[\begin{array}{cc}
		\frac{1}{4}, & \frac{3}{4} \vspace*{0.05cm}\\
		0, & 0
	\end{array}|1
	\right]_q\\
	&\hspace{1cm} + \varphi(ae)\left(q-1\right).
\end{align*} 
Employing \cite[Proposition 3.5 (3-7)]{BS3} and the fact that $\varphi(
-1)=1$ for $q\equiv1\pmod4$, we obtain 
\begin{align*}
	\#C_{a,b,c,d,e,f}(\mathbb{F}_q)&=2q-2+q\varphi(ad)\cdot {_{2}}F_1\left(\begin{array}{cc}
		\chi_4 & \chi_4^3 \\
		& \varepsilon
	\end{array}|1
	\right)+q\cdot{_{2}}F_1\left(\begin{array}{cc}
		\chi_4 & \chi_4^3 \\
		& \varepsilon
	\end{array}|1
	\right)\\
	&\hspace{1cm} + \varphi(ae)\left(q-1\right),
\end{align*}
where $\chi_4$ is a multiplicative character of order $4$ on $\mathbb{F}_q$. Using \cite[Theorem 4.9]{greene}, we have
\begin{align*}
	\#C_{a,b,c,d,e,f}(\mathbb{F}_q)&=2q-2+q\varphi(ad)\chi_4(-1){\overline{\chi_4}\choose\overline{\chi_4}}+q\chi_4(-1){\overline{\chi_4}\choose\overline{\chi_4}}+ \varphi(ae)\left(q-1\right).
\end{align*}
Clearly, using $ab=de$ and $c^2=4ab$, we have $\varphi(d)=\varphi(e)$, that is, $\varphi(ad)=\varphi(ae)$. Also, \eqref{eq-4} yields ${\overline{\chi_4}\choose\overline{\chi_4}}=-\frac{1}{q}$. Hence, we have
\begin{align*}
	\#C_{a,b,c,d,e,f}(\mathbb{F}_q)&=2q-2-\varphi(ad)\chi_4(-1)-\chi_4(-1)+(q-1)\varphi(ad).
\end{align*}
Using the facts that $\chi_4(-1)=1$ if $q\equiv1\pmod8$ and $\chi_4(-1)=-1$ if $q\equiv5\pmod8$, we complete the proof.
\end{proof}
\begin{proof}[Proof of Corollary \ref{cor-2}]
From Theorem \ref{thrm--5} and using the condition that $c^2g=h^2de$, we obtain 
\begin{align}
	a_q(E_{h,g})=q\cdot\varphi(-hg)\cdot{_2}G_{2}\left[\begin{array}{cc}
		\frac{1}{2},  & \frac{1}{2}\vspace*{0.07cm} \\
		\frac{1}{4}, & \frac{3}{4}
	\end{array}|\frac{4de}{c^2}
	\right]_q,\label{eq--27}\\
		a_q(E_{\frac{4}{h},\frac{1}{g}})=q\cdot\varphi(-hg)\cdot{_2}G_{2}\left[\begin{array}{cc}
		\frac{1}{2},  & \frac{1}{2}\vspace*{0.07cm} \\
		\frac{1}{4}, & \frac{3}{4}
	\end{array}|\frac{c^2}{4de}
	\right]_q\label{eq--26}.
\end{align}
Also, using the fact that $g=\frac{deh^2}{c^2}$, we have 
\begin{align}\label{neq eq--28}
	\varphi(gd)=\varphi(e).
\end{align}
Combining \eqref{eq--27}, \eqref{eq--26}, \eqref{neq eq--28}, and Theorem \ref{MT-6}, we obtain the desired result.
\end{proof}

\end{document}